\theoremstyle{plain}
\newtheorem{lemma}{Lemma}[section]
\newtheorem{theorem}[lemma]{Theorem}
\newtheorem{proposition}[lemma]{Proposition}
\newtheorem{remark}[lemma]{Remark}
\newtheorem{definition}[lemma]{Definition}
\newtheorem{example}[lemma]{Example}
\font\rm=cmr12
\def\Z{\mathbb Z}
\def\R{\mathbb R}
\def\d{\delta}
\def\r{\rtimes}
\def\t{\times}
\def\o{\otimes}
\def\e{\epsilon}
\def\a{\alpha}
\def\b{\beta}
\def\s{\sigma}
\title[Bounds on unitary  duals]
{Some bounds on unitary  duals of classical groups - non-archimeden case}
\author{Marko Tadi\'c}
\address{Department of Mathematics, University of Zagreb
\\
Bijeni\v{c}ka 30, 10000 Zagreb,
 Croatia\\
Email: \tt tadic{\char'100}math.hr}
\keywords{non-archimedean local fields, classical  groups, unitarizability}
\subjclass[2000]{Primary: 22E50}
\thanks{This work has been  supported  by Croatian Science Foundation under the
project 9364.}
\date{\today}
\begin{document}

{\sl Preliminary version}

\begin{abstract} 
In the first part of the paper we give some bounds for domains where the unitarizabile subquotients can show up in the parabolically induced representations  of  classical $p$-adic groups. Roughly, it can show up only if the 
central character of the inducing irreducible cuspidal representation is dominated in an appropriate way by the  
square root of the modular character of minimal parabolic subgroup.
For representations supported by fixed parabolic subgroup, a  more precise bound is given. There are also bounds for specific Bernstein components. 
A number of these upper bounds are best possible.

The second part of the paper addresses a question how far is the trivial representation from the rest of the unramified automorphic dual. 
By a result of L. Clozel, trivial representation is isolated in the automorphic dual of a split rank one semisimple group over a completion of a global field, but it is very seldom isolated in the unitary dual (it can happen only in the archimedes cases). Further, the level of isolation in the case of $SL(2)$  is  important  for the number theory. For the higher rank groups, the trivial representation is always isolated in the unitary dual by an old result of D. Kazhdan. Still, we may ask  if the level of isolation is higher in the case of the automorphic duals. We show that the answer is negative to this question for symplectic $p$-adic groups.
\end{abstract}

\maketitle

\setcounter{tocdepth}{1}


\section{Introduction}\label{intro}

Bounds on various parts of unitary duals of  reductive groups over local fields can be very important, in particular in the number theory. Let $\rho$ be an irreducible cuspidal representation of a Levi factor $M$ of a parabolic subgroup $P$ of a connected reductive group over a non-archimedean local field $F$. In \cite{T-top-dual} we have proved that the set of all unramified characters $\chi$ of $M$ such that Ind$_P^G(\chi)$ contains an irreducible  unitarizable subquotient is a compact subset of the set of all unramified characters  of $M$\footnote{This result is very easy to prove using Bernstein center (see \cite{T88}). Further, one can drop the condition of cuspidality.} (observe that this fact does not hold for archimedean fields, already for $SL(2,\mathbb R)$). In other words, this implies that the set of such characters where irreducible unitarizable subquotients can show up is bounded (for each fixed $M$ and $\rho$). Two questions were left unanswered there. The first question is  to find good bounds for the region where the unitarizability can show up  in a fixed Bernstein component\footnote{Bernstein component of a non-unitary dual determined by $\rho$ is the set of all equivalence classes of irreducible subquotients of representations Ind$_P^G(\chi\rho)$ when $\chi$ runs over the set of all unramified characters of $M$.}, i.e. how far from the unitary axis we can have unitary subquotients (for fixed $\rho$).
 The second question is if
the set of all these regions where unitarizability can show up is bounded  
   when one fixes $M$, and let $\rho$ to run over all the irreducible cuspidal representations of $M$ (i.e., if   there exists a set of bounds which is bounded), and if it exists, to find as precise common bound as possible.

 A result in the direction of the first question was obtained by L. Clozel and E. Ullmo in \cite{ClU04} for $G=Sp(2n,F)$, $M=A$ the maximal torus of $G$ and $\rho=\mathbf 1_{A}$, the trivial representation of $G$, and for the subset of automorphic (unitarizable) representations in this (unramified) Bernstein component. We shall briefly recall of their result.
 First we introduce some notation.

On $\mathbb R^q$ we shall consider two orderings. Let $x=(x_1,\dots,x_q) , y=(y_1,\dots,y_q) \in \mathbb R^q$. Then we write
$$
\textstyle
 x\leq_w y\iff \sum_{j=1}^i x_j\leq    \sum_{j=1}^i y_j,\ \ \ \ \forall i\in\{1,\dots,q\},
$$
$$
x\leq_s y\iff x_i\leq y_i,\ \ \ \ \forall i\in\{1,\dots,q\}.
$$

Denote by $\mathcal O_F$ the maximal compact subring of $F$, and by $q_F$ the cardinality of its residual field. Fix an element  $\varpi_F$ which generates the maximal ideal in $\mathcal O_F$.  Denote by $|\ |_F$ the normalized absolute value on $F$ (it is determined by the condition $|\varpi_F^{-1}|_F=q_F$).
 For $i=1,\dots,q$, denote by $e_i=(1,\dots,1,\varpi_F^{-1},1,\dots,1)$, where $\varpi_F^{-1}$ is placed  at the $i$-th place.

We consider a maximal torus $A$ in $Sp(2q,F)$ consisting of the diagonal matrices in the group (see the second  section for more details regarding notation). Using an isomorphism $(x_1,\dots,x_q)\mapsto (x_1,\dots,x_q,x_q^{-1},\dots, x_q^{-1})$, we identify  $(F^\times)^q$ with the maximal torus.  Denote by $P_{min}$ a minimal parabolic subgroup consisting of the diagonal matrices in $Sp(2q,F)$.  

Fix an irreducible  representation $\pi$ of $Sp(2q,F)$. Then we can find a standard parabolic subgroup $P=MN$ of $G$ such that the Levi factor $M$ contains $P_{min}$, an irreducible unitarizable cuspidal representation $\rho$ of $M$ and a positive valued (unramified) character $\chi$ of $M$ such that $\pi$ is a subquotient of Ind$_{P}^{Sp(2q,F)}(\chi\rho)$, and that holds
$$
\log_{q_F}(\chi(e_1))\geq \dots \geq \log_{q_F}(\chi(e_q)_F)\geq 0.
$$
Then the element $(\log_{q_F}(\chi(e_1)), \dots , \log_{q_F}(\chi(e_q)_F))\in  (\mathbb R_{\geq 0})^q$ is uniquely determined by $\pi$, and it will be denoted by
$$
||\pi||.
$$
Recall that for the trivial representation $\mathbf 1_{Sp(2q,F)}$ we have
$$
||\mathbf 1_{Sp(2q,F)}||=(q,q-1,\dots,2,1).
$$

Now we shall recall of Theorem 6.3 from \cite{ClU04}  (we shall use logarithmic interpretation of it):

\begin{theorem}
\label{Clozel-Ullmo}
 {\rm  (L. Clozel, E. Ullmo)} Let $k$ be a number field and $v$ a place of $k$. Denote by $F$ the completion of $k$ at $v$. Let $\pi$ be a non-trivial irreducible unramified representation of $Sp(2q,F)$ which shows up as a tensor factor of an irreducible representation of the adelic group $Sp(2q,\mathbb A_k)$ which is in the support of the representation $Sp(2q,\mathbb A_k)$ in the space of the square integrable automorphic forms $L^2(Sp(2q,k)\backslash Sp(2q,\mathbb A_k)$). Then
$$
||\pi||\leq_w (q-1+\theta,q-2+\theta,\dots,1+\theta,\theta),
$$
where $\theta$ is  the Ramanujan constant for $SL(2,k)$\footnote{See  \cite{ClU04} for more details.}.
\end{theorem}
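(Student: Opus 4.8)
\emph{Strategy of proof.} The assertion concerns only the local component $\pi=\pi_v$, so the plan is to reduce it to an inequality between Satake parameters and then to read that inequality off from the shape of the global discrete spectrum together with bounds towards the Ramanujan conjecture. Since $Sp(2q,F)$ has dual group $SO(2q+1,\mathbb C)$, an unramified irreducible representation is determined by a semisimple conjugacy class in $SO(2q+1,\mathbb C)$ whose eigenvalues in the standard representation form a multiset $\{\beta_1^{\pm1},\dots,\beta_q^{\pm1},1\}$; setting $s_i=\log_{q_F}|\beta_i|$ and reordering so that $s_1\geq\cdots\geq s_q\geq0$ (which is exactly the Weyl-group normalization built into the definition of $\|\pi\|$), one has $\|\pi\|=(s_1,\dots,s_q)$, and the recorded equality $\|\mathbf 1_{Sp(2q,F)}\|=(q,q-1,\dots,1)$ says that the Satake parameter of the trivial representation is $q_F^{\langle\rho,\,\cdot\,\rangle}$. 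Thus what must be shown is that the ordered vector of positive exponents of $\pi_v$ is $\leq_w$ the vector of positive exponents of $\mathbf 1$ shifted uniformly inward by $1-\theta$.

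The next step is to invoke a description of the unramified part of $L^2_{disc}(Sp(2q,k)\backslash Sp(2q,\mathbb A_k))$ — through Arthur's parametrization of the discrete spectrum of classical groups, or equivalently through the functorial transfer of the cuspidal support of $\pi$ to an isobaric automorphic representation of $GL_{2q+1}(\mathbb A_k)$ of orthogonal type (Cogdell--Kim--Piatetski-Shapiro--Shahidi), or else through the Burger--Sarnak restriction principle applied to the chain $Sp(2)\times\cdots\times Sp(2)\hookrightarrow Sp(2q)$. Any of these yields that the Satake parameter of $\pi_v$ is a direct sum of ``$SL_2$-blocks''
$$
\bigoplus_{i}\ \sigma_i\otimes\mathrm{diag}\bigl(q_F^{(a_i-1)/2},q_F^{(a_i-3)/2},\dots,q_F^{-(a_i-1)/2}\bigr),
$$
where $\sigma_i$ is the unramified Satake parameter at $v$ of a self-dual cuspidal automorphic representation $\tau_i$ of some $GL_{d_i}(\mathbb A_k)$, the widths satisfy $\sum_i a_id_i=2q+1$, the pairs $(\tau_i,a_i)$ are pairwise distinct, and the whole sum is of orthogonal type. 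The configuration with a single block $(\tau,d,a)=(\mathbf 1,1,2q+1)$ reproduces $\mathbf 1_{Sp(2q,F)}$ exactly, and since $\pi$ is non-trivial this configuration is excluded.

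It then remains to bound the pieces and to optimize. For the cuspidal data one uses the Ramanujan bound: a cuspidal $\tau_i$ on $GL_1$ is a Hecke character, necessarily unitary, contributing exponent $0$; a cuspidal $\tau_i$ on $GL_2$ that genuinely occurs as a block here is essentially an $SL(2,k)$-form, so its local parameter is bounded by $q_F^{\theta}$; and, by the parity constraints (orthogonality of the total sum, symplecticity of $\nu_{2m}$, pairwise distinctness), any block carrying a non-tempered exponent and having $d_i>2$ is forced to be of symmetric-square type, whose non-tempered exponent is again governed by $\theta$ and which can only occupy a low position of the sorted exponent vector. A direct comparison now shows that the ordered vector of positive exponents coming from \emph{any} admissible configuration is $\leq_w$ the one coming from the ``almost trivial'' configuration — a single width $2q-1$ block of orthogonal type together with a two-dimensional remainder that is tempered or at worst $\theta$-bounded — which is $(q-1,q-2,\dots,1,s)$ with $0\leq s\leq\theta$, and this is $\leq_s$, hence $\leq_w$, the vector $(q-1+\theta,q-2+\theta,\dots,1+\theta,\theta)$. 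The comparison is a short ``staircase'' estimate: merging blocks into one wider block only increases the partial sums $\sum_{j\leq i}$ of the sorted exponent vector, so it is enough to test the configurations with the fewest and widest blocks permitted by the constraints.

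The main obstacle is twofold, and both points are exactly where the automorphic — as opposed to merely unitary — hypothesis enters. First, one must have in hand the precise list of parameters occurring in $L^2_{disc}(Sp(2q))$: it is the self-duality, orthogonality and multiplicity-freeness conditions that eliminate the otherwise fatal configurations, most notably a single symplectic block $(\chi,2q)$, which would contribute a leading exponent $q-\tfrac12>q-1+\theta$ since $\theta<\tfrac12$. Second, one must know that cuspidal representations of $GL_d$ with $d>2$, for which the known approximations to Ramanujan are weaker than $\theta$, never produce a non-tempered exponent in a high position; this again follows from the type constraints, since self-dual cuspidal $GL_3$-representations are symmetric squares of $GL_2$-representations, and larger $d$ forces correspondingly narrower widths. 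If one wishes to avoid the classification altogether, the Burger--Sarnak argument delivers the same bound, but then the delicate point is to select the $SL(2)$-embedding realizing the extremal staircase and to check that the restriction of $\pi$ along it lies in the closure of the automorphic dual of $SL(2,k)$, to which the definition of $\theta$ applies directly.
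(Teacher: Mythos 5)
The paper does not actually prove this theorem: it is quoted verbatim from \cite{ClU04}, and the paper only remarks afterwards that the Clozel--Ullmo proof ``is based on a result\ldots{} claiming that restriction preserves automorphicity,'' i.e.\ the Burger--Sarnak restriction principle. So there is no in-paper argument for you to match.

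Your primary route --- through Arthur's parametrization of the discrete spectrum of $Sp(2q)$, decomposing the local parameter into $\tau_i\boxtimes\nu_{a_i}$-blocks, and then majorizing the sorted exponent vector --- is a genuinely different and (since Arthur's book postdates \cite{ClU04} by a decade) anachronistic derivation. It has the advantage of being more structural and of delivering a slightly sharper intermediate bound $(q-1,\dots,1,s)$ with $s\leq\theta$. You correctly flag the Burger--Sarnak route, which is what the original authors used and what the paper alludes to. Both give the theorem; the restriction-principle route avoids the classification entirely, at the cost of the ``select the right $SL(2)$-embedding'' step you mention.

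Three points in your sketch would need real work before the argument is complete. First, the ``staircase'' assertion that merging blocks into wider ones only increases partial sums is not obvious; it is a majorization statement about unions of centered arithmetic progressions shifted by the $\pm s_v$ coming from the $\tau_i$-factors, and it should be proved rather than asserted. Second, your reason that self-dual cuspidal $GL_d$-blocks with $d>2$ are harmless --- that they are ``forced to be of symmetric-square type'' --- is simply false for $d\geq4$; what actually saves the argument is weaker but sufficient: any cuspidal block with $a_i=1$ has all local exponents strictly below $1/2$ by Jacquet--Shalika, and a short computation shows the resulting partial sums are still $\leq_w$ those of the almost-trivial configuration (note the comparison really is only $\leq_w$, not $\leq_s$, at this step, because the smallest exponent may exceed $\theta$). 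Third, the exclusion of the dangerous configurations $\chi\boxtimes\nu_{2q}$ (symplectic, so outside $SO(2q+1)$) and $\mathbf{sgn}\boxtimes\nu_{2q+1}$ (nontrivial determinant) by the orthogonality and $\det=1$ constraints should be made explicit, since these are precisely the parameters that would violate the bound if permitted.
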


The proof of this theorem 
is based on a result proved by  L. Clozel and E. Ullmo in \cite{ClU04}, claiming that restriction preserves automorphicity. Later in  \cite{Cl07},
using \cite{Oh02},  L. Clozel avoids use of the Ramanujan constant if $q\geq2$, showing that     for $q\geq2$,  $\theta$ can be taken to be 0 in the above estimate of $||\pi||$. 

In particular, if we drop in the   above theorem the assumption that $\pi$ is non-trivial, we get obviously the estimate $(q,q-1,\dots,2,1)$ for (unitarizable) automorphic unramified  irreducible representations, i.e. 
$$
||\pi||\leq_w (q,q-1,\dots,2,1).
$$
In other words, for automorphic representation in the unramified component the upper bound is $(q,q-1,\dots,2,1)$. 
In this paper we prove that for the above estimate we can avoid assumption of automorphicity, and moreover, that this estimate holds for any irreducible unitarizable representation (not only unramified). 
Actually, we prove a slightly stronger result then this, since the estimate is for $\leq_s$:

\begin{theorem}
\label{intro-general}
  Let $F$ be a local non-archimedean field of characteristic 0  and let $\pi$ be an irreducible unitarizable   representation of $G:=Sp(2q,F)$. Then
$$
||\pi||\leq_s ||\mathbf1_G||.
$$
The equality  holds if and only $\pi$ is the trivial or the Steinberg  representation.
\end{theorem}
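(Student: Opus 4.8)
First I would translate the statement into a claim about cuspidal supports. Write $G=Sp(2q,F)$ and $\nu=|\det|_F$. An irreducible $\pi$ is a subquotient of $\mathrm{Ind}_P^G(\chi\rho)$ for a standard Levi $M=GL(n_1,F)\times\cdots\times GL(n_k,F)\times Sp(2m,F)$, an irreducible unitarizable cuspidal $\rho=\rho_1\otimes\cdots\otimes\rho_k\otimes\sigma$ of $M$, and a positive unramified $\chi$ whose restriction to the $i$-th general linear factor is $\nu^{a_i}$ and which is trivial on $Sp(2m,F)$; after a suitable choice of associate parabolic and of representatives in the cuspidal support one may assume $a_1\geq\cdots\geq a_k\geq 0$, and then $||\pi||$ is the tuple listing each $a_i$ with multiplicity $n_i$ followed by $m$ zeros. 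Since $\mathbf 1_G$ and $\mathrm{St}_G$ are respectively the unique irreducible quotient and the unique irreducible subrepresentation of $\mathrm{Ind}_{P_{min}}^G(\delta_{P_{min}}^{1/2})$, they share its cuspidal support, so $||\mathbf 1_G||=||\mathrm{St}_G||=(q,q-1,\dots,1)$ and both attain equality; the assertion is that the bound holds and that these are the only equalities.

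For the inequality I would invoke the sharper bound of the first part of the paper for representations supported on a fixed parabolic subgroup. Applied to $P=MN$ as above it says that if $\mathrm{Ind}_P^G(\chi\rho)$ has an irreducible unitarizable subquotient, then the positive part of the central character of $\chi\rho$ is dominated, in the relevant partial order on $X^*(A)\otimes\mathbb R$, by the restriction of $\delta_{P_{min}}^{1/2}$. For $Sp(2q,F)$, in the coordinates fixed in the introduction, this reads precisely $a_i\leq 1+m+(n_{i+1}+\cdots+n_k)$ for every $i$. Since $1+m+(n_{i+1}+\cdots+n_k)=q-(n_1+\cdots+n_i)+1$ is the value of $q-j+1$ at the last position $j$ of the $i$-th block, and $q-j+1$ is strictly decreasing, this blockwise bound is equivalent to $||\pi||_j\leq q-j+1$ for all $j$, that is, to $||\pi||\leq_s||\mathbf 1_G||$. (This is also where the gain over the weak order comes from: the estimate is really a bound per block, whose coordinatewise form is sharp only at the last slot of each block.)

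It remains to treat equality. If $||\pi||=(q,q-1,\dots,1)$ then equality holds in every $||\pi||_j\leq q-j+1$; but $||\pi||$ is constant on each general linear block while $q-j+1$ strictly decreases, and it vanishes on the symplectic block while $q-j+1>0$ there, so all $n_i=1$, $m=0$ and $a_i=q-i+1$. Thus $\pi$ is one of the finitely many constituents of the regular principal series $\mathrm{Ind}_{P_{min}}^G(\delta_{P_{min}}^{1/2})=\nu^q\times\nu^{q-1}\times\cdots\times\nu^1\rtimes 1$; these are exactly the Langlands quotients $L(\delta(\Delta_1)\otimes\cdots\otimes\delta(\Delta_t)\rtimes\mathrm{St}_{Sp(2m_0,F)})$ where $0\leq m_0\leq q$ and $\Delta_1,\dots,\Delta_t$ are nonempty pairwise disjoint intervals of positive integers partitioning $\{m_0+1,\dots,q\}$. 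Here $\mathbf 1_G$ is the case "all $\Delta_r$ singletons, $m_0=0$", unitarizable because it is the trivial character, and $\mathrm{St}_G$ is the case "$t=0$", unitarizable because it is square-integrable; for any other constituent the Langlands datum contains either a segment $\Delta_r$ with $|\Delta_r|\geq 2$, or a nontrivial tempered part $\mathrm{St}_{Sp(2m_0,F)}$ with $m_0\geq 1$ and $t\geq 1$, and in either case one checks that the leading segment sits beyond the range where Langlands quotients of the form $\nu^s\delta\rtimes\tau$ can be unitarizable, so the constituent is not unitarizable. Hence equality forces $\pi\in\{\mathbf 1_G,\mathrm{St}_G\}$.

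The main obstacle is exactly this last step. The blockwise bound controls only the cuspidal support and cannot by itself distinguish the several constituents sharing the cuspidal support of $\mathbf 1_G$; eliminating the intermediate ones requires finer input — a direct analysis of the Jantzen filtration and of the signs of the standard intertwining operators on this particular principal series, or an appeal to the known unitarizability ranges for Langlands quotients $\nu^s\delta\rtimes\tau$ for classical groups. These ingredients, together with the Langlands and M{\oe}glin--Tadi\'c classifications, are also where the characteristic-zero hypothesis enters.
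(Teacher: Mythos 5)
Your overall strategy — translate $||\pi||$ into exponents of the cuspidal support, prove a coordinatewise bound, and then analyze the equality case by pinning down the constituents of the principal series $\nu^q\times\dots\times\nu\rtimes\mathbf 1$ — is the same as the paper's. However, the central step is not justified and does not match what the paper actually proves.

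The bound you invoke, $a_i\leq 1+m+(n_{i+1}+\cdots+n_k)$ per block, is not the content of the paper's fixed-parabolic theorem. That theorem (Theorem \ref{intro-fixed-parabolic}, proved as the theorem following the example in Section 3) bounds the exponents of the $i_0$-th block by $(r,r-1,\dots,c+1,c)$ with $c=\max\{t\in(1/2)\mathbb Z:\ t\leq\sqrt{q^*/n_{i_0}+1/4}\}$ — a square-root bound in the rank of the partial cuspidal support, not a linear bound involving the other block sizes. Your claimed inequality is strictly what you would need for the conclusion, but it is asserted rather than derived, and it cannot be read off from the results you cite. The paper's actual route to Theorem \ref{bound by trivial} is different: it first establishes $\alpha_{\rho,\sigma}\leq q+1$ (for $Sp$) from M\oe glin's identity $\sum_{\delta(\rho,k)\in\mathrm{Jord}(\sigma)}kn_\rho=q^*$ (Lemma \ref{cuspidal bound} and the lemma following it), and then feeds this into Proposition \ref{bounds-general}, which controls both the gaps $\alpha_i-\alpha_{i-1}\leq 1$ between successive exponents of a self-dual class and the existence of an exponent $\leq\alpha_{\rho^u,\pi_{cusp}}$ in that class (together with a $\frac{i}{2}$-type bound for non-self-dual classes). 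This combination of a bound on the reducibility point with a bound on consecutive differences is the actual mechanism; your proposal skips it entirely.

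For the equality case you correctly reduce to the cuspidal support of $\mathbf 1_G$ and you correctly identify the issue — that the cuspidal support does not single out $\mathbf 1_G$ and $\mathrm{St}_G$ among the constituents of the regular principal series — but you then leave the key assertion ("one checks that the leading segment sits beyond the unitarizability range") unproved, and indeed you flag it as the main obstacle. The paper disposes of this by citing Casselman's theorem (\cite{Ca81}) that this degenerate principal series has exactly two unitarizable constituents. Without that input (or an equivalent argument), your proof is incomplete at exactly the step you identify. So the proposal captures the shape of the argument but is missing the two substantive ingredients: the actual mechanism producing the coordinatewise bound (M\oe glin's dimension identity plus Proposition \ref{bounds-general}), and the Casselman input for the equality case.
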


The proof of the above theorem is a relatively easy consequence of \cite{T-CJM} and a recent work of J. Arthur and C. M\oe glin, which we use to get estimate for the  cuspidal reducibility\footnote{It s possible that the above simple estimate is already known, but we do not know for it.}. 
The above theorem holds also for other classical groups (with slight modification; see the third section). 
It would be interesting to know if such an estimate holds  more  generally (for $\leq_w$).

For particular $M$, we can often get much better estimates. They are in the following

\begin{theorem}
\label{intro-fixed-parabolic} Let char$(F)=0$ and let
 $P$ be 
a parabolic subgroup of $Sp(2n,F)$ whose Levi factor is isomorphic to
$$
GL(p_1,F)^{n_1}\t\dots GL(p_k,F)^{n_k}\t Sp(2q,F),
$$ 
where $n_i\ne n_j$ for $i\ne j$.
Let $\pi$ be an irreducible unitarizable subquotient of 
$$
\text{Ind}_P^{Sp(2n,F)}(|\det|_F^{e_1}\rho_1\t\dots\t|\det|_F^{e_k}\rho_k\r\s),
$$
 where $\rho_i$ are irreducible unitarizable cuspidal representations of general linear groups, $e_i\in\R$ and $\sigma$ is an irreducible cuspidal representations of $Sp(2q,F)$. 
 Denote by $e_1',e_2', \dots,e_{n_{i_0}}'$ the subsequence of the sequence $e_1,e_2, \dots,e_k$ consisting  of all $e_i$ such that $\rho_i$ is a representation of $GL(n_{i_0},F)$.
 After renumeration, we can assume $|e_1'|\geq |e_2'|\geq \dots\geq |e_{n_{i_0}}'|$.
Then  
$$
\textstyle
(|e_1'|,|e_2'|, \dots,|e_{n_{i_0}}'|)
 \leq_s (r,r-1,\dots,c+1,c)
$$
for appropriate $r$, where
$$
\textstyle
c=\max\big\{t\in 1/2\Z; t\leq \sqrt{\frac{2q}{n_{i_0}+1}}\big\}.
$$

\end{theorem}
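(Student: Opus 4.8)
The plan is to reduce, via the Jacquet module and Langlands machinery developed in \cite{T-CJM}, to a situation where the only $GL$-factor that matters is $GL(n_{i_0},F)$, and then to bound the exponents attached to it by peeling them off one at a time, the ``bottom'' of the resulting staircase being controlled by the cuspidal reducibility of $|\det|_F^s\rho\r\sigma$, for which the recent work of Arthur and M\oe glin supplies the needed input.

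First, since $\pi$ is unitarizable it is Hermitian, so replacing those summands $|\det|_F^{e_i}\rho_i$ with $e_i<0$ by their contragredients changes neither $\pi$ nor any $|e_i|$; hence we may assume all $e_i\geq 0$. Collecting the summands attached to the distinguished factor $GL(n_{i_0},F)$ and writing $\rho:=\rho_{i_0}$, the theory of Jacquet modules and of Bernstein components --- this is where the hypothesis $n_i\neq n_j$ for $i\neq j$ enters, guaranteeing that these copies of $\rho$ cannot be interchanged with, or absorbed into, the other $GL$-factors, so that both the relevant exponents and their number $n_{i_0}$ are unambiguously determined by $\pi$ --- yields an irreducible representation $\pi^\flat$ of a smaller symplectic group (a subquotient of the induced representation built from the remaining data together with $\sigma$) such that $\pi$ is a subquotient of
$$
|\det|_F^{e_1'}\rho\t\dots\t|\det|_F^{e_{n_{i_0}}'}\rho\r\pi^\flat,\qquad |e_1'|\geq\dots\geq|e_{n_{i_0}}'|\geq 0.
$$
Everything is thereby reduced to bounding the sequence $(|e_1'|,\dots,|e_{n_{i_0}}'|)$.

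Next I would show that peeling off the largest exponent costs at most $1$. As $|e_1'|$ strictly dominates every other $\rho$-exponent in the cuspidal support of $\pi$, the Jacquet module results of \cite{T-CJM} (together with independence of irreducible constituents lying over distinct cuspidal lines) give an embedding $\pi\hookrightarrow |\det|_F^{e_1'}\rho\r\pi_1$ with $\pi_1$ irreducible; using unitarity one checks, on the one hand, that $|e_1'|$ exceeds the largest $\rho$-exponent of $\pi_1$ by at most $1$ (otherwise the linked segment through $|\det|_F^{|e_1'|}\rho$ already destroys unitarity) and, on the other hand, that $\pi_1$ is again unitarizable, its cuspidal support being that of $\pi$ with one copy $|\det|_F^{\pm e_1'}\rho$ deleted. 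Iterating gives $|e_l'|\leq|e_{l+1}'|+1$ for $1\leq l\leq n_{i_0}-1$, hence
$$
(|e_1'|,\dots,|e_{n_{i_0}}'|)\leq_s\big(|e_{n_{i_0}}'|+n_{i_0}-1,\ \dots,\ |e_{n_{i_0}}'|+1,\ |e_{n_{i_0}}'|\big),
$$
which is exactly a staircase $(r,r-1,\dots,c+1,c)$ with $r=|e_{n_{i_0}}'|+n_{i_0}-1$; it remains only to bound the bottom exponent $|e_{n_{i_0}}'|$ by the asserted $c$. After deleting all but one copy of $\rho$ one is reduced to a unitarizable representation which is a subquotient of $|\det|_F^{|e_{n_{i_0}}'|}\rho\r\sigma'$ for a cuspidal $\sigma'$ of $Sp(2q,F)$, and unitarity forces $|e_{n_{i_0}}'|$ below the largest exponent carrying a square-integrable subquotient; combining the M\oe glin--Tadi\'c description of discrete series (the Jordan blocks attached to $\rho$ must ``fit inside'' $Sp(2q,F)$ alongside the at most $n_{i_0}$ available copies of $\rho$) with the exact cuspidal reducibility of $|\det|_F^s\rho\r\sigma'$ furnished by Arthur and M\oe glin then yields $|e_{n_{i_0}}'|\leq\sqrt{2q/(n_{i_0}+1)}$, which, being a half-integer, is $\leq c$.

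I expect the main obstacle to lie in the peeling step: that the residual $\pi_1$ stays unitarizable is not formal --- a subquotient of a parabolically induced unitarizable representation need not be unitarizable --- so one must exploit the specific position of $\pi$ (essentially that near the top its structure is forced to be a complementary series over a discrete series) rather than argue abstractly, and one must ensure the exponent really drops by exactly $1$ and not merely stays bounded. A secondary difficulty is the arithmetic of the bottom bound: pairing the Arthur--M\oe glin reducibility value with the combinatorial ``fitting'' of Jordan blocks tightly enough to reach $\sqrt{2q/(n_{i_0}+1)}$ rather than only a weaker estimate.
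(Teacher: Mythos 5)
Your high-level plan (staircase bound on the $\rho$-exponents, anchored at the cuspidal reducibility point, with Arthur--M\oe glin supplying the arithmetic bound on that point) does match the paper's. But the mechanism you propose for establishing the staircase contains a genuine gap, and it is not the paper's mechanism.

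The gap is exactly the one you flag yourself: after peeling off $|\det|_F^{e_1'}\rho$, the residual $\pi_1$ has no reason to be unitarizable, so the induction does not close. The hint you offer (``exploit the specific position of $\pi$\dots complementary series over a discrete series'') does not repair this, because the paper's actual argument never asks the residual to be unitarizable at all. The paper invokes Proposition~\ref{bounds-general}, whose proof uses Proposition~\ref{CJM} to produce, under a gap hypothesis on the exponents, a \emph{structural} factorization $\pi\cong\theta\rtimes\pi'$ (valid for an arbitrary irreducible $\pi$, no unitarity needed, and with $\theta$ carrying \emph{all} exponents above the gap rather than just one) together with the irreducibility of the full family $\nu^\alpha\theta\rtimes\pi'$, $\alpha\geq 0$. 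One then checks this is a continuous family of irreducible Hermitian representations starting at a unitarizable point, hence would be entirely unitarizable, hence bounded --- contradicting unboundedness. Thus the contradiction is reached against $\pi$ itself, not against a hypothetical unitarizable $\pi_1$. Your single-step peeling also proves something weaker than needed: $|e_l'|\leq|e_{l+1}'|+1$ is vacuous when many exponents coincide, whereas Proposition~\ref{bounds-general}(2) gives the $\leq 1$ gaps between the \emph{distinct} exponents above $\alpha_{\rho^u,\pi_{cusp}}$ together with the existence of at least one exponent $\leq\alpha_{\rho^u,\pi_{cusp}}$; this pair of facts is what yields the staircase $(r,r-1,\dots,c+1,c)$.

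On the bottom bound, the paper's route is also different from (and tighter than) what you sketch. It does not argue by ``fitting Jordan blocks alongside $n_{i_0}$ copies of $\rho$''; it uses Lemma~\ref{cuspidal bound}, a direct consequence of M\oe glin's dimension identity \eqref{dimension} for a \emph{cuspidal} $\sigma$ together with the step-down property of cuspidal Jordan blocks, yielding $\alpha_{\rho,\sigma}^2\leq q^*/p+1/4$ with $p=\dim$ of the $GL$ group of $\rho$. This is the quantity feeding into $c$ (compare with the Section~3 version of the theorem and the worked example, which suggest that the divisor in the introductory formula should be the $GL$-size $p_{i_0}$ rather than the multiplicity $n_{i_0}$). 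In short: the estimate is about the cuspidal support of $\sigma$ alone, independent of the multiplicity $n_{i_0}$, and your justification for it does not match the paper's and would not obviously produce the stated value.
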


For other classical groups the upper bound $c$ is very similar (see the third section).

Consider the example where $k=1$, $p_1=2$, $n_1=5$ and $q=6$ in the above theorem. The above theorem gives the bound $(\frac{13}2,\frac{11}2,\frac92,\frac72,\frac52)$. In other words, 
\begin{equation}
\label{example-intro}
\textstyle
||\pi||\leq_s (\frac{13}2,\frac{13}2,\frac{11}2,\frac{11}2,\frac92,\frac{9}2,\frac72,\frac{7}2,\frac52,\frac{5}2,0,0,0,0,0,0).
\end{equation}
This is much sharper estimate then the one given by Theorem \ref{intro-general},  which  is $(16,15,\dots,2,1)$.

If we want upper bound for specific Bernstein component, we need to have a data parameterizing  the cuspidal representation $\s$. We shall use Jordan blocks (see the third section). Fix an irreducible  square integrable representation $\s$ of some $Sp(2q,F)$ and fix a self dual irreducible cuspidal representation $\rho$ of a general linear group. For $k\in \Z_{\geq 1}$, the square integrable representation attached by Bernstein-Zelevinsky to the segment $[ |\det|_F^{-(k-1)/2}\rho,|\det|_F^{(k-1)/2}\rho]$ will be denoted by $\d(\rho,k)$ (see the third section).
Then for one parity of $k\in\Z_{\geq 1}$, the representation Ind$(\d(\rho,k)\o\mathbf \s)$ is always irreducible\footnote{It is even if Ind$( |\det|_F^{1/2}\rho\o \mathbf1_{Sp(0,F)})$ is irreducible. Otherwise it is odd.}. For the other parity we have reducibility, with finitely many exceptions. Denote by $Jord(\s)$ the set of all such representations which are exceptions, when $\rho$ runs over all the self dual irreducible cuspidal representations of  general linear groups\footnote{C. M\oe gain has shown that 
using the local Langlands correspondence for
general linear groups, $Jord_\rho(\s)$ transfers to  the admissible homomorphism of the Weyl-Deligne group attached  by J. Arthur to $\s$ in \cite{A} (see \cite{T13} for a little bit more precise explanation, but still avoiding  too much technical details).}. For a self dual irreducible cuspidal representation $\rho$ of a general linear group, denote by $Jord_\rho(\s)$ the set of all $k$ such that there exist $\d(\rho,k)\in Jord(\s)$.

The following theorem gives upper bounds for individual Bernstein components.

\begin{theorem} 
\label{intro-Bernstein-component}
Let char$(F)=0$. Fix an  irreducible cuspidal representation $\sigma$ of $Sp(2q,F)$ and let $\rho_1,\dots,\rho_k$ be  irreducible unitarizable cuspidal representations of general linear groups $GL(p_1,F), \dots , GL(p_k,F)$ respectively, such that $\rho_i\not\cong|\det|_F^\b\rho_j$ for any $i\ne j$ and any $\a\in \mathbb C$. Let $n_1,n_2,\dots,n_k$ be positive integers and let $\b_{i,j}, 1\leq i\leq k, 1\leq j\leq n_i$ be a set of complex numbers such that
the representation $$
|\det|_F^{\b_{1,1}}\rho_1\t\dots \t|\det|_F^{\b_{1,n_1}}\rho_1\t\dots
$$
 $$
\dots\t |\det|_F^{\b_{k,1}}\rho_1\t\dots \t|\det|_F^{\b_{k,n_k}}\rho_1\r\s
$$
contains a unitarizable subquotient.
 Fix some index ${i}$. After renumeration we can assume that for real parts of complex exponents hold $|\Re(\b_{i,1})| \geq  \dots\geq |\Re(\b_{i,n_i})|$. Denote by $X_i$ the set of all unramified  characters $\chi$ of $GL(p_i,F)$ such that $\chi\rho_i$ is a self dual representation. Then $X_i$ is a finite set. If $X_i\ne\emptyset$, denote
 $$
\textstyle
c_i=\frac{1+\max\{\text{card}(Jord_{\chi\rho_i}(\s));\chi\in X_i\}}2.
$$
Then
$$
\textstyle
(|\Re(\b_{i,1})|,  \dots, |\Re(\b_{i,n_i})|)
 \leq_s (c_i+n_i-1,\dots,
 c_i+1,c_i)
$$
 if $X_i\ne\emptyset$, and 
$$
\textstyle
(|\Re(\b_{i,1})|,  \dots, |\Re(\b_{i,n_i})|)
 \leq_s (\frac {n_i}2,\frac{n_i-1}2,\dots,1,\frac12)
$$
if $X_i=\emptyset$.
\end{theorem}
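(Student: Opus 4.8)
\emph{Overall plan.} I would isolate the contribution of each cuspidal line, then dispose of the line $\rho_i$ either by Tadi\'c's classification of the unitary dual of general linear groups or by the structure, established in \cite{T-CJM}, of the irreducible unitarizable representations supported by a single self-dual cuspidal line over a cuspidal representation of a symplectic group; in the latter situation I would combine that structure with M\oe glin's description of cuspidal reducibility in terms of Jordan blocks. First I would reduce to $k=1$. Since $\rho_i\not\cong|\det|_F^\b\rho_j$ for $i\ne j$, the cuspidal supports along the lines $\{|\det|_F^{x}\rho_j;x\in\R\}$ are mutually orthogonal, so using the geometric lemma and the structure formula for Jacquet modules of classical groups, together with the reduction of unitarizability problems to a single cuspidal line (cf.\ \cite{T-CJM}), one shows: if $\pi$ is an irreducible unitarizable subquotient of the representation in the statement, then there is an irreducible unitarizable representation $\pi^{(i)}$ of a smaller symplectic group, with cuspidal support in $\{|\det|_F^x\rho_i;x\in\R\}\cup\supp(\s)$, which is a subquotient of $|\det|_F^{\b_{i,1}}\rho_i\t\dots\t|\det|_F^{\b_{i,n_i}}\rho_i\r\s$ and whose multiset of $\rho_i$-exponents equals $\{\b_{i,1},\dots,\b_{i,n_i}\}$. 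It therefore suffices to treat $k=1$; write $\rho=\rho_1=\rho_i$, $n=n_i$ and $\b_j=\b_{1,j}$.

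\emph{The case $X_i=\emptyset$.} Here no unramified twist of $\rho$ is self-dual, so $|\det|_F^{s}\rho\r\s$ is irreducible for every $s\in\mathbb C$; hence $\s$ does not interact with the $\rho$-line, and $\pi^{(i)}$ is — up to parabolic induction that does not change the $\rho$-exponents — an irreducible unitarizable representation of a general linear group with cuspidal support in a single line of $\rho$. By Tadi\'c's classification of the unitary dual of $GL$, such a representation is an irreducible product of Speh representations $u(\d(\rho,a),m)$ and of complementary series $|\det|_F^{\a}u(\d(\rho,a),m)\t|\det|_F^{-\a}u(\d(\rho,a),m)$, $0<\a<\tfrac12$; a direct inspection of the cuspidal supports of these building blocks yields
$$
\textstyle
(|\Re(\b_{i,1})|,\dots,|\Re(\b_{i,n_i})|)\leq_s(\tfrac{n_i}2,\tfrac{n_i-1}2,\dots,1,\tfrac12).
$$

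\emph{The case $X_i\ne\emptyset$.} Since $\pi^{(i)}$ is Hermitian, the real parts of its $\rho$-exponents form a multiset symmetric under $x\mapsto-x$, so after an unramified twist (which changes neither unitarizability nor the magnitudes $|\Re(\b_j)|$) I may assume $\rho$ self-dual and, maximizing over the finite set $X_i$, $\text{card}\,Jord_\rho(\s)=2c_i-1$. By M\oe glin's work and \cite{T-CJM} the reducibility of $|\det|_F^s\rho\r\s$, and more generally the composition series of the representations $\d(\rho,a)\r\s$, are governed by $Jord_\rho(\s)$, and the largest $s\geq0$ for which $|\det|_F^s\rho\r\s$ has a unitarizable subquotient is at most $c_i=\tfrac12(1+\text{card}\,Jord_\rho(\s))$; this is the assertion for $n_i=1$. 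For $n_i\geq2$ I would use the description from \cite{T-CJM} of the irreducible unitarizable representations with cuspidal support in $\{|\det|_F^x\rho\}\cup\supp(\s)$: each is obtained from a strongly positive (more generally tempered) ``core'' $\pi_0$ supported by the $\rho$-line over $\s$ by a chain of operations of Speh type and of complementary-series type along that line. The $\rho$-exponents of $\pi_0$ are bounded by the corresponding truncation of $(c_i+n_i-1,\dots,c_i)$ — by the M\oe glin--Tadi\'c classification of strongly positive discrete series together with the reducibility statement above — while each Speh-type or complementary-series factor adds $\rho$-exponents lying in arithmetic progressions of step $1$ (up to an overall shift by some $\a\in[0,\tfrac12)$); peeling off these factors one at a time while recording the number of copies of $\rho$ they carry, an induction on $n_i$ yields
$$
\textstyle
(|\Re(\b_{i,1})|,\dots,|\Re(\b_{i,n_i})|)\leq_s(c_i+n_i-1,\dots,c_i+1,c_i).
$$

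\emph{Expected main obstacle.} I expect the real difficulty to lie in the case $X_i\ne\emptyset$, and there in two steps. First, pinning down the exact constant $c_i=\tfrac12(1+\text{card}\,Jord_\rho(\s))$ out of \cite{T-CJM} and M\oe glin's work: once $\text{card}\,Jord_\rho(\s)\geq2$ this lies strictly below the cuspidal reducibility point $\tfrac12(1+\max Jord_\rho(\s))$, so a bound merely by the first reducibility point would be too weak, and one must genuinely see where the complementary series along the $\rho$-line ceases to be unitarizable. Second, the combinatorial bookkeeping must be done for the strong order $\leq_s$ and must include the endpoints; this forces one to control the full profile of the exponents against the staircase, not just its top entry, which is exactly where the inductive step is delicate. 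By contrast, the reduction to a single line is a long but routine Jacquet-module computation, and the case $X_i=\emptyset$ is immediate from Tadi\'c's classification of the unitary dual of $GL$.
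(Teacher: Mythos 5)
The paper does not actually write out a proof of this theorem; it is left as a corollary of Proposition \ref{bounds-general} combined with M\oe glin's result that for cuspidal $\s$ the set $Jord_\rho(\s)$, when non-empty, is the full run $\{1,3,\dots,\max_\rho(\s)\}$ or $\{2,4,\dots,\max_\rho(\s)\}$, hence encodes $\alpha_{\rho,\s}=(\max_\rho(\s)+1)/2$. Your reduction to a single cuspidal line and your treatment of $X_i=\emptyset$ via the unitary dual of $GL$ are in the same spirit as Proposition \ref{bounds-general}(1), so that part is fine.

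There is, however, a genuine error in your treatment of $X_i\ne\emptyset$, precisely at the place you flagged as the main difficulty. You assert that the supremum of $s\geq0$ for which $\nu^s\rho\rtimes\s$ has a unitarizable subquotient is at most $c_i=\tfrac12\bigl(1+\text{card}\,Jord_\rho(\s)\bigr)$, and that the complementary series along the $\rho$-line ``ceases to be unitarizable'' strictly before the cuspidal reducibility point. This is impossible: for $0\leq s<\alpha_{\rho,\s}$ the representation $\nu^s\rho\rtimes\s$ is irreducible Hermitian, so by the continuity argument already used inside the proof of Proposition \ref{bounds-general} (and in \cite{T-ext}) the whole family is unitarizable, and at $s=\alpha_{\rho,\s}$ it reduces with a square-integrable, hence unitarizable, subquotient. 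The supremum is therefore exactly $\alpha_{\rho,\s}$. Since for cuspidal $\s$ the run structure gives $\text{card}(Jord_\rho(\s))=\alpha_{\rho,\s}$ (odd parity) or $\alpha_{\rho,\s}-\tfrac12$ (even parity), one has $c_i<\alpha_{\rho,\s}$ as soon as $\alpha_{\rho,\s}\geq\tfrac32$; for instance a cuspidal $\s$ of $Sp(4,F)$ with $Jord_{\mathbf 1_{F^\times}}(\s)=\{1,3\}$ gives $c_i=\tfrac32$, while $\nu^{2}\mathbf 1_{F^\times}\rtimes\s$ has a unitarizable square-integrable subquotient whose $\rho$-exponent is $2$. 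So your bound for $n_i=1$ is false, and the induction built on it collapses. (This tension indicates the formula in the statement should read $\max$ rather than $\text{card}$, giving $c_i=\alpha_{\chi\rho_i,\s}$, in which case the claim follows directly from Proposition \ref{bounds-general}(2)(ii)(b); that proposition, rather than an inductive peeling-off of Speh and complementary-series factors, is the tool the paper relies on, and your sketch never invokes it.)
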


 The second topic of this paper is also devoted to bounding some parts of  the unitary dual, but in a different way. We shall explain this below. It is again related to the result of L. Clozel and E. Ullmo which we have mentioned above.

Let us recall that D. Kazhdan introduced in \cite{K67} property (T) for a locally compact group $G$. This property means that the trivial representation is isolated in the unitary dual $\hat G$ of $G$, i. e. in the set of all equivalence classes of the irreducible unitary representations of $G$, supplied with a natural topology.  He proved there that a simple group $G(F)$ over a  locally compact non-discrete  field $F$ of split rank different from one has property (T), and he obtained some very interesting  arithmetic consequences from that. Property (T) has shown to be related to a number of interesting facts. There is a vast literature on this, which we shall not discuss here. We shall mention only one result  in that direction  related to the exception of rank one in the Kazhdan result. 

L. Clozel proved in \cite{Cl03} $\tau$-conjecture, which is a weaker condition then property (T), but holds for any rank. This weaker property tells that the  trivial representation is isolated in the automorphic dual of $G(F)$. More precisely, if $F$ is a completion of a number field $k$ at a place $v$, then the automorphic dual of $G(k)$ at $v$ is the support of the representation  $G(k_v)$ on the space $L^2(G(k)\backslash G(\mathbb A_k))$ of the square integrable automorphic forms.

Now that one knows that the trivial representation is isolated in the automorphic dual, further important question may be: how far is it from the other irreducible automorphic representations (in other words, what is the quantitative  level of isolation of the trivial representation). Let us recall that   such type of questions in rank one case
are important in the  number theory. One may ask such question also in the higher ranks. Recall that the trivial representation is not isolated in the unitary dual in rank one case, except in several archimedean completions. In the higher ranks, the trivial representation is isolated in both unitary and automorphic. Considering the rank one case, one may ask if  the level of isolation is higher in the automorphic dual then in the unitary dual in higher ranks.
Theorem \ref{Clozel-Ullmo} of L. Clozel and E. Ullmo for symplectic groups   gives an estimate in the automorphic case (for $q\geq 2$, taking $\theta=0$). Moreover, this estimate cannot be improved in this case (for $\theta=0$), since G.~Mui\'c has proved in  \cite{Mu-unit} that there exists an irreducible unitarizable  automorphic representation for which we have equality in their theorem (this representation is the Aubert-Schneider-Stuhler involution of an irreducible square integrable representation which is the "closest" to the Steinberg representation\footnote{G. Savin has also told us of such an example.}).

In this paper we prove the following result:

\begin{theorem} 
\label{intro-unramified}
 Let $F$ be a local non-archimedean field of characteristic different from two  and let $\pi$ be a non-trivial irreducible unramified representation of $Sp(2q,F)$, $q\geq 2$. Then
$$
||\pi||\leq_w (q-1,q-2,\dots,1,0)
$$
and
$$
\textstyle
||\pi|| <_s (q-1,q-2,\dots, 2,1,\frac12).
$$
\end{theorem}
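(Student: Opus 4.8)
The plan is to reduce the assertion to the structure of the unramified (spherical) unitary dual, dispatch the easy part with the results already proved in the paper, and localize where genuinely new input is needed. Since $\pi$ is unramified, it is the unique irreducible subquotient having a nonzero $Sp(2q,\mathcal O_F)$-fixed vector of $\mathrm{Ind}_{P_{min}}^{Sp(2q,F)}(\chi)$ for some unramified character $\chi$ of the torus; as $\pi$ is unitarizable it is Hermitian, so after applying the Weyl group $\pi$ is a Langlands quotient
$$
\pi\cong L\big(\nu^{x_1}\rho_1,\dots,\nu^{x_m}\rho_m;\tau\big),\qquad x_1\ge\dots\ge x_m>0,
$$
with the $\rho_i$ unitary unramified characters of $GL(1,F)$ and $\tau$ a spherical tempered representation of a smaller symplectic group, and then $||\pi||=(x_1,\dots,x_m,0,\dots,0)$. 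Coordinates coming from a non-self-dual $\rho_i$, or from $\tau$, contribute only exponents $<\tfrac12$ (the complementary-series bound for $GL$-type linking), so all possibly large coordinates sit on one of the two self-dual unramified lines $\mathbf 1_{GL(1,F)}$, $\eta$, and it suffices to analyse a single such line. There the relevant cuspidal reducibility is the exponent $1$, since $\mathrm{Ind}_B^{SL(2,F)}(\nu^{s})$ reduces exactly at $s=\pm1$ --- equivalently $\mathbf 1_{GL(1,F)}\rtimes\mathbf 1_{Sp(0,F)}$ and $\nu^{1/2}\mathbf 1_{GL(1,F)}\rtimes\mathbf 1_{Sp(0,F)}$ are irreducible.

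Next, a coarse bound: absorbing $\tau$ into a $GL$-induction over $\mathbf 1_{Sp(0,F)}$ and applying Theorem \ref{intro-Bernstein-component} (or Theorem \ref{intro-fixed-parabolic}) with $\sigma=\mathbf 1_{Sp(0,F)}$ and $\rho_1=\mathbf 1_{GL(1,F)}$ of multiplicity $q$, one recovers $||\pi||\le_s(q,q-1,\dots,1)=||\mathbf 1_G||$, that is, Theorem \ref{intro-general}. This falls short of the claim by one unit in the leading coordinates and by the bottom estimate $x_q\le\tfrac12$; recovering those must use both that $\pi$ is non-trivial and that $\pi$ is the \emph{spherical} member of its principal series, and neither follows from the general bounds of the paper.

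The decisive step is then to bring in the classification of the unitarizable representations in the unramified Bernstein component --- \cite{T-CJM}, together with the Arthur--M\oe glin description of reducibilities used in this paper --- and to check the two inequalities for every member other than $\mathbf 1_{Sp(2q,F)}$; equivalently, to show that a unitarizable $\pi$ as above with $||\pi||\not\le_w(q-1,\dots,1,0)$ or $||\pi||\not<_s(q-1,\dots,1,\tfrac12)$ is forced to be the trivial representation. One may organize this either by reading the bounds directly off the known description of the spherical unitary dual of $Sp(2q,F)$, or inductively on $q$: the base case $q=2$ is the explicit --- and small --- spherical unitary dual of $Sp(4,F)$, and in the inductive step, when $\tau\ne\mathbf 1_{Sp(0,F)}$, one peels a suitable unitary $GL(1,F)$-factor off the Langlands data and descends to a non-trivial spherical unitarizable representation of $Sp(2(q-1),F)$ (so that $||\pi||$ is obtained from the smaller invariant by appending a zero), the ``fully non-tempered'' case $m=q$ being handled directly. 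The hypothesis $q\ge2$ enters precisely here: for $q=1$ the spherical complementary series of $SL(2,F)$ accumulates at the trivial representation, which is then not isolated, so no gap exists; for $q\ge2$ the trivial representation is isolated (Kazhdan, \cite{K67}), and the two inequalities make this isolation quantitative. The Aubert-Schneider-Stuhler involution --- which preserves cuspidal support (hence $||\cdot||$) and unitarizability, and interchanges $\mathbf 1_{Sp(2q,F)}$ with the Steinberg representation --- is a convenient tool here, in particular for relating the bound to Theorem \ref{Clozel-Ullmo} and to Mui\'c's sharpness example.

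The main obstacle is this last step, and inside it the strict bottom estimate $x_q\le\tfrac12$ for the ``fully non-tempered'' spherical unitarizable representations whose support meets the $\mathbf 1_{GL(1)}$-line in a full chain. Unlike the coarse bound, this does not follow from the general estimates of the paper; it is a quantitative form of the isolation of the trivial representation and genuinely needs the fine structure of the unitary dual --- which unitarizable points are isolated and where precisely they sit --- i.e.\ the input of \cite{T-CJM} and of Arthur--M\oe glin.
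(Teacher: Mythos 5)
Your proposal sets up the right framework but stops short of a proof at exactly the point where the substance of the argument lies, and you say as much yourself.  You correctly observe that the coarse bound $\|\pi\|\leq_s(q,q-1,\dots,1)$ follows from Theorem~\ref{intro-Bernstein-component} applied with $\sigma=\mathbf 1_{Sp(0,F)}$, and you correctly identify that sharpening the leading coordinates by one and obtaining the bottom estimate $x_q\leq\tfrac12$ cannot be done with those general bounds.  But the ``decisive step'' you describe --- ``to check the two inequalities for every member other than $\mathbf 1_{Sp(2q,F)}$'' by ``reading the bounds directly off the known description of the spherical unitary dual, or inductively on $q$'' by ``peeling a suitable unitary $GL(1,F)$-factor off the Langlands data'' --- is exactly the entire content of the proof, and it is left unperformed.  ``Peeling off a $GL(1)$-factor'' does not obviously preserve unitarizability nor reduce the problem, so the inductive step as stated has no actual mechanism; it is a hope, not an argument.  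You close by naming the bottom estimate as ``the main obstacle'' and asserting it ``genuinely needs the fine structure of the unitary dual,'' which is precisely the admission that the proof is incomplete.

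The paper's actual proof is a complete, explicit three-way case analysis based on the classification of the unramified unitary dual from \cite{MuT} and the structure of strongly negative and negative representations from \cite{Mu-no-un}.  The cases are: (a) $\pi$ strongly negative, handled by induction on the number of parts in the Jordan partition $(t,s)$, showing $\|\pi\|\leq_s(q,q-1,\dots,1)$ with equality only for the trivial representation and the sharper bound $(q-1,\dots,1,0)$ otherwise; (b) $\pi$ negative but not strongly negative, handled by induction on $q$ by writing $\pi=\langle[\,\cdot\,]\rangle\rtimes\pi'$; (c) $\pi$ unitarizable unramified but not negative, i.e.\ a member of a complementary series, handled by induction on $q$ using the explicit conditions (2)(a)--(d) of Theorem~\ref{theorem-class}.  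The whole thing rests on a small but essential combinatorial lemma about $\leq_s$ and the descending rearrangement (Lemma~\ref{lemma-ineq}), which your sketch does not touch.  Your proposal also misattributes the input: the proof needs \cite{MuT} and \cite{Mu-no-un} (valid for char$(F)\ne 2$), not \cite{T-CJM} together with the Arthur--M\oe glin results; the latter require char$(F)=0$, which would be a strictly stronger hypothesis than the theorem's, and indeed the whole point of Section~5 is that this particular theorem avoids that hypothesis.  The Aubert--Schneider--Stuhler involution and Kazhdan's isolation theorem, which you invoke as tools, do not appear in the paper's proof at all.
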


The first inequality of the theorem tells that the trivial representation is not more isolated in the automorphic dual than it is in the unitary dual (here we consider the ranks at least two).

The proof of the above theorem is an elementary application of  the classification of the unramified  unitary duals in \cite{MuT} (this also reproves the result of L. Clozel and E. Ullmo, and adds a new estimate there).

 We are thankful to Goran Mui\'c for discussions during writing of this paper.
 
 The contents of this paper is as follows. After introduction, in the second section we introduce notation that shall be used in the paper. The third section is devoted to the bounds where  the unitarizability can show up. We recall of the unramified unitary dual of $p$-adic symplectic groups in the fourth section, while the fifth section brings bounds for the trivial representation from the rest of the unramified irreducible unitary representations.

\section{Notation}

We fix a local non-archimedean field $F$.
The normalized absolute value on $F$ is denoted by $|\ \ |_F$ and the
character $g\mapsto |\text{det}|_F$, $GL(n,F)\rightarrow \mathbb R^\times$ is
denoted by $\nu$. For each irreducible essentially square integrable
representation $\delta$ of $GL(n,F)$ there exist a unique $e(\delta)\in \mathbb R$ and a unique up to an
equivalence (unitarizable) irreducible square integrable representation
$\delta^u$ of $GL(n,F)$ such that 
$$
\delta\cong\nu^{e(\delta)}\delta^u.
$$

For  smooth
representations $\pi_i$  of $GL(n_i,F)$ for $i=1,2$, 
 by
$\pi_1\times\pi_2$ is denoted  the smooth representation of $GL(n_1+n_2,F)$
parabolically induced by 
$$
\pi_1\otimes\pi_2
$$
 from appropriate  maximal
parabolic subgroup, which is standard with respect to the subgroup of the
upper triangular matrices (parabolic induction that we consider is
normalized). 

Fix an irreducible square integrable representation $\tau$ of a general linear group and a positive integer $k$. Then the representations
$$
\nu^{(k-1)/2}\tau\t \nu^{(k-1)/2-1}\tau\t \dots\t \nu^{-(k-1)/2}\tau 
$$
has a unique irreducible quotient, which is denoted by
$$
u(\tau,k).
$$

For an irreducible square integrable representation $\tau$ of a general linear group there exists an irreducible unitarizable cuspidal representation $\rho$ of a general linear group and a positive integer $\ell$ such that $\tau$ is isomorphic to the unique irreducible subrepresentation of 
$$
\nu^{(\ell-1)/2}\rho\t \nu^{(\ell-1)/2-1}\rho\t \dots\t \nu^{-(\ell-1)/2}\rho .
$$
Then we write
$$
\tau=\d([\nu^{-(\ell-1)/2}\rho,\nu^{(\ell-1)/2}\rho]).
$$

For  an irreducible representation $\pi$ of $GL(n,F)$ there exist
irreducible cuspidal representations $\rho_1,\dots,\rho_k$ of general linear
groups such that $\pi$ is isomorphic to  a subquotient of $\rho_1\times\dots
\times\rho_k$. The multiset of equivalence classes $(\rho_1,\dots,\rho_k)$
is called the cuspidal support of $\pi$, and it is denoted by supp$(\pi)$.

While in the case of general linear groups we follow mainly notation of   \cite{Ze}, in the case of
 the
classical $p$-adic groups, we follow the notation of \cite{T-Str}. The $n\times n$ matrix having 1's on the second
diagonal and
 all other entries  $0$ is  denoted by $J_n.$  The identity
matrix is denoted by   $I_n$.   For a  $2n \times 2n$  matrix $S$,  denote 
$$
\aligned
^{\times}S = \left[
\begin{matrix}
 0 & -J_n\\
J_n & 0
\end{matrix}
\right] \;
^tS  \left[
\begin{matrix}
0 & J_n\\
-J_n & 0
\end{matrix}
\right].
\endaligned
$$
Then ${Sp}(n,F)$ is the group of all  $2n \times 2n$   matrices over $F$ which
satisfy  $^\times S \, S = I_{2n}$ (we take  ${Sp}(0,F)$ to be the trivial group).

By $O(m,F)$ is denoted the group of all $m
\times
m$ matrices $X$  with entries in F which satisfy
$
^\tau X\, X =  I_{m}.
$
Denote $SO(m,F)=O(m,F)\cap SL(m,F)$.

In the case of groups that we consider in this paper, we always fix the minimal  parabolic subgroup   
consisting of all upper triangular matrices in the 
group (denoted by $P_{\text{min}}$).

In the sequel, we denote by $S_n$  either the group  ${Sp}(n,F)$ or
${SO}(2n+1,F)$. Parabolic subgroups which contain the minimal parabolic
subgroup which we have fixed will be called standard parabolic subgroups.

Most of the results of the section 3. hold also for other classical groups considered in \cite{Moe-T}, and also for unitary groups (also considered in \cite{Moe-T}). When some statement of this paper  is specific for symplectic or split odd-orthogonal groups, it will be specified in the statement.

In this case of unitary groups, a  separable quadratic extension $F'$ of $F$ is fixed. We denote by $\theta$ the non-trivial element of the Galois group of $F'$ over $F$. 
Whenever in the non-unitary case appears  a representations of $GL(n,F)$, we need to replace it with a representations of $GL(n,F')$, and 
the contragredient representations $\tilde\pi$ of a representation $\pi$ of $GL(n,F)$, with the representations $g\mapsto \pi(\theta(g))$ of $GL(n,F)$\footnote{We can handle the case of unitary groups uniformly, as we did in \cite{Moe-T}.}.

 The Jacquet module of a
representation $\pi$ of $S_n$ for the standard maximal parabolic subgroup
whose Levi factor is a direct product of $GL(k,F)$ and a classical group $S_{n-k}$, is
denoted by
$$
s_{(k)}(\pi).
$$

Each irreducible representation $\tau$  of some classical group $S_\ell$. is a subquotient  of  a representation of the form
$$
\rho_1\times\dots\dots \times
\rho_k\rtimes\sigma,
$$
 where $\rho_1,\dots 
,\rho_k$ are  irreducible cuspidal representations    of general linear groups
and $\sigma$ is an irreducible cuspidal representation  
 of some $S_m$.
The representation $\sigma$ is called the partial cuspidal support of $\tau$
and it is denoted by
$$
\tau_{cusp}.
$$
  If $\rho_1\times\dots\dots \times
\rho_k$ is a representation of $GL(p,F)$, then the Jacquet module
$s_{(p)}(\tau)$ will be denoted by 
$$
s_{GL}(\tau).
$$
An irreducible cuspidal representation $\rho$ of a general linear group is
called a factor of $\tau$ if there exists an irreducible subquotient
$\pi\otimes\tau_{cusp}$ of
$s_{GL}(\tau)$ such that $\rho$ is in the cuspidal support of $\pi$. Then the
set of all factors of
$\tau$ is contained in
$$
\{\rho_1,\tilde\rho_1,\rho_2,\tilde\rho_2,\dots, 
\rho_k,\tilde\rho_k \}
$$
(recall that our $\tau$ is a subquotient of $\rho_1\times\dots\dots \times
\rho_k\rtimes\sigma $). Further, for each $1\le i\leq k$, at least
one representation from
$\{\rho_i,\tilde\rho_i\}$ is a factor of $\tau$.

Let $\rho$ and $\sigma$ be unitarizable irreducible cuspidal representations
of a general linear group and of $S_n$ respectively. Then if
$\nu^\alpha\rho\r \s$ reduces for some $\alpha\in \mathbb R$, then
$\rho\cong\tilde\rho$. Further, if $\rho\cong 
\tilde\rho$, then we have always reduction for
unique $\alpha\geq 0$ (\cite{Si}). This reducibility point will be denoted by
$$
\alpha_{\rho,\sigma}.
$$
A very non-trivial fact which  follows from the recent work of J. Arthur and C. M\oe glin is that always
$
\alpha_{\rho,\sigma}\in (\frac12)\mathbb Z
$
if char$(F)=0$.

\section{Bounding unitarizability}

The following proposition  from \cite{T-CJM} (Proposition 2.2 there) will be used several times in this paper to get some bounds where the unitarizability can show up in the parabolically induced representations (it was also used in \cite{T-CJM} for similar purpose).

\begin{proposition}
\label{CJM}
Let $\pi$ be an irreducible  representation of a classical group $S_q$. 

\noindent  (i) Let
$X$ be a set of irreducible cuspidal representations of general linear groups
which satisfies
\begin{enumerate}
\item $\nu^{\pm1}\rho\not\in \tilde X$, for any $\rho\in X$.

\item $X\cap \tilde X=\emptyset$.

\item There is no element in $X\cup \tilde X$ which is a factor of  $\pi$.

\item $\rho\r\pi_{cusp}$ is irreducible for any $\rho\in X$.

\item $\rho \t\rho'$ and $\tilde\rho \t\rho'$ are irreducible for any
$\rho\in X$ and any factor
$\rho'$ of $\pi$.

\end{enumerate}
 Suppose that $\theta$ is an irreducible representation of a general linear
group whose cuspidal support is contained in $X$. Then
$$
\theta\r\pi
$$
is irreducible.

\noindent
(ii) Suppose that we can find sets $X$ and $Y$ of (equivalence classes of)
irreducible cuspidal representations  of general linear groups such that
$X\cup
\tilde X \cup Y\cup \tilde Y$ contains all the factors of $\pi$, $X\cap
(Y\cup \tilde Y)=\emptyset$, and that  hold conditions (1), (2) and (4)
from (i). Further suppose that $\rho \t\rho'$ and $\tilde\rho \t\rho'$ are
irreducible for all $\rho\in X\cup \tilde X$ and $\rho'\in Y$
(i.e. that holds condition (5) from (i) for all  
$\rho\in X\cup
\tilde X$ and
$\rho'$ in
$Y$).

Then there exists an irreducible representation $\theta$ of a general linear
whose  cuspidal  support  is
contained in
$X$ (i.e. each representation of the support),
and there exists
an irreducible representation $\pi'$ of a classical group whose  all factors
are contained in
$Y\cup\tilde Y$,  such that
$$
\pi\cong
\theta
\rtimes
\pi'.
$$
 The partial cuspidal support  of $\pi'$ is $\pi_{cusp}$. Further, $\pi$
determines $\theta$ and $\pi'$ as above up to  equivalence.

If $X$ is a subset of the set of all  the factors of $\pi$, then  each
representation from $X$ shows up  in the
 cuspidal support of $\theta$.

\end{proposition}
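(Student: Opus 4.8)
The plan is to reduce everything to the case of part (i) with $\theta$ cuspidal, and to handle that case by a Jacquet-module analysis built on the geometric lemma (the structure formula for Jacquet modules of classical groups), Silberger's reducibility criterion, and the input that $\alpha_{\rho,\sigma}$ exists only when $\rho$ is self-dual. Two consequences of the hypotheses should be noted at the outset. Hypothesis (2) forces every $\rho\in X$ to be non-self-dual, so $\rho\rtimes\pi_{cusp}$ is automatically irreducible and (4) is a formality in the present setting; and hypothesis (1) together with (2) forces $\tilde\rho\notin\{\nu^{-1}\rho,\rho,\nu\rho\}$ for $\rho\in X$, in particular $\rho\times\tilde\rho$ is irreducible. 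These are exactly the features that keep the part of a cuspidal support coming from $X$ from interacting with itself under the contragredient or under the Weyl group of a classical group; since $S_q$ is symplectic or odd-orthogonal, we may also use the MVW involution, i.e. $\tilde\pi\cong\pi$ for every irreducible $\pi$ of $S_q$.

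For the cuspidal case of (i), $\theta=\rho\in X$, I would expand $s_{(p)}(\rho\rtimes\pi)$ by the geometric lemma. Hypothesis (3) kills every summand having $\tilde\rho$ in the $GL$-direction (which would require $\tilde\rho$ to be a factor of $\pi$), and hypothesis (5) prevents $\rho$ from being absorbed into a longer segment together with any factor of $\pi$; combined with the observation that $\rho\rtimes\pi$ and $\tilde\rho\rtimes\pi$ have the same composition factors (their cuspidal supports agree as $W$-orbits, $\rho$ and $\tilde\rho$ being $W$-conjugate), this gives enough rigidity to pin down the socle and cosocle of $\rho\rtimes\pi$ and, via a length count comparing $\rho\rtimes\pi$ with $\tilde\rho\rtimes\pi$ and the standard intertwining operator between them, to conclude that $\rho\rtimes\pi$ is irreducible. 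The same computation recovers $\rho$ from $\rho\rtimes\pi$, giving injectivity of $\rho\mapsto\rho\rtimes\pi$.

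For general $\theta$ in (i), realize $\theta$ as a subrepresentation of a product $\rho_1\times\cdots\times\rho_m$ over a suitable ordering of its cuspidal support, so $\theta\rtimes\pi\hookrightarrow\rho_1\times\cdots\times\rho_m\rtimes\pi$; since $\pi'\mapsto\pi'\rtimes\pi$ is exact, in the Grothendieck group $[\rho_1\times\cdots\times\rho_m\rtimes\pi]=\sum_j m_j[\theta_j\rtimes\pi]$, where the $\theta_j$ run over the composition factors of $\rho_1\times\cdots\times\rho_m$, all supported in $X$. It therefore suffices to show simultaneously, for all irreducible $\theta'$ with support in $X$, that $\theta'\rtimes\pi$ is irreducible and that $\theta'\mapsto\theta'\rtimes\pi$ is injective; I would prove this by induction on the cardinality of the support, peeling one cuspidal $\rho\in X$ off $\theta'$ and applying a mild strengthening of the cuspidal case to the pair $(\rho,\theta''\rtimes\pi)$, where (1) and (2) ensure that inside $\theta''\rtimes\pi$ the representation $\rho$ remains unlinked from everything except its own occurrences and those of $\tilde\rho$. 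Part (ii) then follows from (i): writing $\pi$ as a subquotient of a product of cuspidals drawn from $X\cup\tilde X\cup Y\cup\tilde Y$ times $\pi_{cusp}$, the irreducibility statements of (i) — together with MVW, which turns $\tilde X$-factors into $X$-factors — allow all the $X$-type cuspidal data to be commuted past the rest (hypothesis (5) makes each such commutation an isomorphism) and gathered into a single irreducible $GL$-representation $\theta$ with support in $X$; a Jacquet-module matching then identifies $\pi$ with $\theta\rtimes\pi'$ for a unique irreducible $\pi'$ with all factors in $Y\cup\tilde Y$ and partial cuspidal support $\pi_{cusp}$, and reading off the $X$-part of $s_{GL}(\pi)$ yields the last assertion.

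The main obstacle is the cuspidal case of (i): the Jacquet module of $\rho\rtimes\pi$ is not as small as one might wish (pieces of $\pi$ can be pulled out to the $GL$-side while $\rho$ is pushed in), so concluding irreducibility requires the careful combination of the geometric-lemma computation with the equality of composition factors for $\rho\rtimes\pi$ and $\tilde\rho\rtimes\pi$ and an intertwining-operator argument. Closely related is the bookkeeping in the peeling induction, where one must check that removing cuspidals one at a time never creates a new self-dual cuspidal, a new reducibility point $\alpha_{\rho,\sigma}$, or a new linking — which is precisely, and somewhat delicately, where the disjointness hypotheses (1) and (2) are used. Conceptually, the content of the proposition is that the Bernstein component singled out by the $X$-data is a product of a $GL$-component with a classical-group component, and making this factorization precise is the heart of the matter.
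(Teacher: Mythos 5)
This proposition is not proved in the paper at all: the text introduces it as ``the following proposition from \cite{T-CJM} (Proposition 2.2 there),'' and simply cites it. There is therefore no internal proof to compare with, and what you have written is a reconstruction of an external argument rather than a match or mismatch with anything in this source.

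On the substance, there is a concrete error near the start that propagates through the sketch. You assert that hypothesis (2), $X\cap\tilde X=\emptyset$, forces every $\rho\in X$ to be non-self-dual, so that $\rho\rtimes\pi_{cusp}$ is automatically irreducible and condition (4) ``is a formality.'' But (2) only gives $\rho\not\cong\tilde\rho$, and the elements of $X$ are arbitrary irreducible cuspidal representations, not necessarily unitary. Writing $\rho=\nu^{e(\rho)}\rho^u$, one may perfectly well have $\rho^u\cong\widetilde{\rho^u}$ with $e(\rho)\ne 0$; then $\tilde\rho=\nu^{-e(\rho)}\rho^u\ne\rho$, so (2) is satisfied, and yet $\rho\rtimes\pi_{cusp}=\nu^{e(\rho)}\rho^u\rtimes\pi_{cusp}$ is reducible precisely when $|e(\rho)|=\alpha_{\rho^u,\pi_{cusp}}$. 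Silberger's criterion, as recalled in Section 2, concerns the unitary cuspidal $\rho^u$, not $\rho$ itself. So (4) is a genuine hypothesis, and your later claim that the hypotheses (1)--(2) alone are ``precisely\ldots where'' one checks that the peeling induction never creates a new reducibility point $\alpha_{\rho,\sigma}$ is not right: that control comes from (4), which your plan has discarded. Beyond this, the cuspidal-case sketch (geometric lemma, length comparison of $\rho\rtimes\pi$ and $\tilde\rho\rtimes\pi$, intertwining operator, then a peeling induction) is a plausible outline, but the central step --- converting equality of semisimplifications plus control on the socle/cosocle into actual irreducibility --- is left as an assertion rather than an argument, so the plan cannot be certified as correct in its present form.
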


We shall use it also in this paper to get some additional bounds.

\begin{proposition}
\label{bounds-general}
 Let $\pi$ be an irreducible unitarizable
representation of a classical group $S_q$. 
Let $\rho$ be a factor of $\pi$.
Suppose that $\rho_1,\dots,\rho_n$ are all the  factors $\tau$ of $\pi$
such that
$\tau^u\cong\rho^u$.

\begin{enumerate}

\item
Let
$
\rho^u\not\cong\widetilde{\rho^u}.
$
 Renumerate $\rho_1,\dots,\rho_n$, $n\geq1$, in a way that
$
|e(\rho_1)|\leq|e(\rho_2)|\leq\dots\leq|e(\rho_n)|$. Then
$$
\textstyle
|e(\rho_i)|\leq \frac i2, \quad 1\leq i \leq n
$$

\item
Suppose
$
\rho^u\cong\widetilde{\rho^u}.
$
 Write the set of all
$
|e(\rho_i)|>
\alpha_{\rho^u,\pi_{cusp}} $, 
 $1\leq i\leq n,
$
as 
$$
\{\alpha_1,\dots,\alpha_\ell\},
$$
 where $\ell \geq 0$ and
$\alpha_1<a_2<\dots <\alpha_\ell$. Then
$$
\alpha_i-\alpha_{i-1}\leq 1
\quad \text{for each} \quad
i=2,3,\dots,\ell,
$$
if $\ell\geq2$. Further

\

\medskip
\noindent
(i) If $\alpha_{\rho^u,\pi_{cusp}}=0$, 
 then
$$
\textstyle
\alpha_i\leq i-\frac12;\quad
i=1,\dots,\ell.
$$

\

\medskip
\noindent
(ii) If $\alpha_{\rho^u,\pi_{cusp}}\geq \frac12$, then 
there exists index
$i$ such that
$$
|e(\rho_i)|
\leq \alpha_{\rho^u,\pi_{cusp}}.
$$
Denote by
$$\alpha_{\rho^u,\pi_{cusp}}^{(\pi)}
=\max \{|e(\rho_i)|;  |e(\rho_i)|\leq
\alpha_{\rho^u,\pi_{cusp}} 
\ \& \
 1\leq i\leq n
\}.
$$
 Then holds

\medskip

(a) \qquad \qquad \qquad$\alpha_1-
\alpha_{\rho^u,\pi_{cusp}}^{(\pi)}\leq 1$ if
$\ell
\geq 1$.

\medskip

(b) \qquad \qquad \qquad
$
\alpha_i\leq \alpha_{\rho^u,\pi_{cusp}}^{(\pi)} +i;\quad
i=1,\dots,\ell.
$

\end{enumerate}

\end{proposition}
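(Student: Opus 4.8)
The plan is to run everything off Proposition~\ref{CJM} together with the fact that a unitarizable representation is Hermitian, so that the cuspidal support of $\pi$ is invariant both under the standard symmetry $\nu^{x}\tau^{u}\mapsto\nu^{-x}\widetilde{\tau^{u}}$ of supports of classical group representations and, by Hermitianity, under $\nu^{x}\tau^{u}\mapsto\nu^{-x}\tau^{u}$; in particular the multiset $\{e(\rho_{i})\}$ is symmetric about $0$. The first reduction is to peel off all cuspidal lines other than the $\rho^{u}$-line (and the $\widetilde{\rho^{u}}$-line): applying Proposition~\ref{CJM}(ii) repeatedly, one line at a time and subdividing when necessary to verify hypotheses (1)--(5) (avoiding the $\nu^{\pm1}$-adjacency obstructions and the isolated reducibility points of the other self-dual cuspidals), one gets $\pi\cong\theta\rtimes\pi'$ with $\supp(\theta)$ disjoint from the $\rho^{u}$- and $\widetilde{\rho^{u}}$-lines and with $\pi'$ a representation of a smaller classical group, $\pi'_{cusp}=\pi_{cusp}$, all of whose factors lie on those two lines and whose $\rho^{u}$-type factors are again $\rho_{1},\dots,\rho_{n}$. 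Since the supports are disjoint, $\pi'$ is again unitarizable (this inheritance is exactly what \cite{T-CJM} provides), so it suffices to treat $\pi=\pi'$. The other ingredient I would isolate first is a ``no isolated far-out exponent'' mechanism: if $\nu^{a}\rho^{u}$ is a factor, $a\neq0$, and there is no factor on the line at Langlands-distance $1$ from it (equivalently, using the $x\mapsto-x$ symmetry, nothing within distance $1$), then $X=\{\nu^{a}\rho^{u}\}$ satisfies the hypotheses of Proposition~\ref{CJM}(ii), forcing $\pi\cong\nu^{a}\rho^{u}\rtimes\pi''$ with $\nu^{a}\rho^{u}\notin\supp(\pi'')$, which contradicts Hermitianity.

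For part~(1), $\rho^{u}\not\cong\widetilde{\rho^{u}}$: there is no reducibility $\nu^{x}\rho^{u}\rtimes\pi_{cusp}$ for any $x$, so $\pi$ has empty ``classical part'' on this line and $\pi\cong\Theta\rtimes\pi_{cusp}$ for an irreducible unitarizable representation $\Theta$ of a general linear group supported on the $\rho^{u}$/$\widetilde{\rho^{u}}$-lines, from which the $e(\rho_{i})$ are recovered up to sign. Now invoke Tadi\'c's classification of the unitary dual of $GL$: $\Theta$ is an irreducible product of Speh representations $u(\delta,a)$ and complementary series pieces $\nu^{s}u(\delta,a)\times\nu^{-s}u(\delta,a)$ with $0<s<\tfrac12$ and $\delta$ square integrable on one of the two lines. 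A direct computation shows each such building block contributes a multiset of $\rho^{u}$-exponents which, sorted by absolute value, obeys $|e_{i}|\leq i/2$; and an elementary merging lemma (if two sorted lists each satisfy $|a_{i}|\leq i/2$, so does their merge, since the $k$-th entry of the merge is an $a_{p}$ or $b_{q}$ with $p,q\leq k$) propagates this to the product. This yields $|e(\rho_{i})|\leq i/2$.

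For part~(2), $\rho^{u}\cong\widetilde{\rho^{u}}$, the same skeleton applies but the line now carries the reducibility point $\alpha_{\rho^{u},\pi_{cusp}}$. The gap statements $\alpha_{i}-\alpha_{i-1}\leq1$, the inequality~(a), and the existence in~(ii) of an exponent $\leq\alpha_{\rho^{u},\pi_{cusp}}$ all follow from the mechanism of the first paragraph: every factor $\nu^{a}\rho^{u}$ with $a>\alpha_{\rho^{u},\pi_{cusp}}$ is forced to have a neighbour at distance $1$, and keeping track of where that neighbour can sit---it is either the next $\alpha_{j}$, or (when it lands at or below $\alpha_{\rho^{u},\pi_{cusp}}$) it is $\leq\alpha_{\rho^{u},\pi_{cusp}}^{(\pi)}$---gives these. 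The top bounds $\alpha_{i}\leq i-\tfrac12$ (when $\alpha_{\rho^{u},\pi_{cusp}}=0$) and $\alpha_{i}\leq\alpha_{\rho^{u},\pi_{cusp}}^{(\pi)}+i$ (when $\alpha_{\rho^{u},\pi_{cusp}}\geq\tfrac12$) genuinely need unitarizability, not merely Hermitianity, since Hermitianity alone allows a single chain to sit arbitrarily far out. Here I would use the classical group analogue of the $GL$ computation above, as furnished by \cite{T-CJM}: the exponents at or below $\alpha_{\rho^{u},\pi_{cusp}}$ behave like complementary-series data anchored at $\alpha_{\rho^{u},\pi_{cusp}}^{(\pi)}$ (resp., when $\alpha_{\rho^{u},\pi_{cusp}}=0$, the relevant segments $\delta(\rho^{u},k)$ have a fixed parity of $k$ and the chain starts at $k=2$), while the exponents above it are the tops of square-integrable-type building blocks whose successive tops increase by at most $1$; summing the resulting staircase gives exactly the stated bounds.

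The main obstacle is the self-dual case: carrying out the bookkeeping in part~(2) so that the parity constraint at the reducibility point is respected, i.e. so the staircase argument delivers $i-\tfrac12$ and $\alpha_{\rho^{u},\pi_{cusp}}^{(\pi)}+i$ exactly rather than an estimate off by $\tfrac12$, and making precise the identification of the anchor $\alpha_{\rho^{u},\pi_{cusp}}^{(\pi)}$ as the base of that staircase. The inheritance of unitarizability under the Proposition~\ref{CJM}-factorization, which drives the whole reduction, is the other delicate point, but it is already available in \cite{T-CJM}; the $GL$ computations in part~(1) are routine once the classification of $\widehat{GL(n,F)}$ is invoked.
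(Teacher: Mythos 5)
The skeleton of your argument does run parallel to the paper's in places, but two of the load-bearing claims are not established and one is mis-stated, so as written this has genuine gaps.

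First, the ``no isolated far-out exponent'' mechanism is wrong as you stated it. Isolating $X=\{\nu^{a}\rho^{u}\}$ via Proposition~\ref{CJM}(ii) gives $\pi\cong\theta\rtimes\pi''$, but this does \emph{not} contradict Hermitianity: one only obtains $\pi^{+}\cong\theta^{+}\rtimes(\pi'')^{+}$ and, after the usual identification $\nu^{-a}\tilde{\rho^{u}}\rtimes-\cong\nu^{a}\rho^{u}\rtimes-$, the uniqueness in Proposition~\ref{CJM}(ii) just forces $\theta\cong\bar\theta$ and $\pi''\cong(\pi'')^{+}$, with no contradiction at all. What actually kills the configuration is exactly what the paper does next: deform to $\nu^{\alpha}\theta\rtimes\pi''$, $\alpha\geq0$, which by Proposition~\ref{CJM}(i) stays irreducible and Hermitian, hence (starting from a unitarizable point) stays unitarizable; this gives an unbounded family of unitarizable representations, impossible by \cite{T88}. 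You acknowledge later that unitarizability is needed, but the mechanism you wrote down in the first paragraph invokes only Hermitianity, and that version of the step fails. The same correction applies to your derivation of the gap statements in part~(2): the paper does not derive $\alpha_{i}-\alpha_{i-1}\leq1$ from such a mechanism, it simply cites Proposition~3.5 of \cite{T-CJM}, which is proved by the complementary-series argument.

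Second, the reduction in part~(1) is not justified. You peel off the other cuspidal lines via Proposition~\ref{CJM}(ii) to write $\pi\cong\theta\rtimes\pi'$, and assert that $\pi'$ is again unitarizable ``(this inheritance is exactly what \cite{T-CJM} provides).'' That inheritance is not what Proposition~\ref{CJM} gives you, and it is not used by the paper. Similarly, you then claim $\pi'\cong\Theta\rtimes\pi_{cusp}$ with $\Theta$ an irreducible \emph{unitarizable} representation of a general linear group; even granting the induced shape, it is far from obvious that the inducing $\Theta$ must be unitarizable, and you offer no argument. The paper sidesteps both of these issues by invoking Theorem~3.2(i) of \cite{T-CJM} directly (together with Theorem~7.5 of \cite{T86}), which already says that a unitarizable $\pi$ is a subquotient of a representation built from one Speh block (or a conjugate pair of $\nu^{\pm\alpha}$-shifted Speh blocks, $0<\alpha<\tfrac12$) on the $\rho_{k}^{u}$-line, times further cuspidals, semidirect $\pi_{cusp}$. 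The bound $|e(\rho_{i})|\leq i/2$ is then read off from the explicit exponent list of the Speh block (choosing $\ell_{0}=\ell_{1}+\ell_{2}$ maximal to handle the indices beyond $\ell_{0}$), with no appeal to a GL unitary-dual classification and no need for the inheritance you assume. Your ``merging lemma'' is correct but becomes unnecessary once you use the structure theorem directly.

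The parts you get essentially right: the use of Proposition~\ref{CJM}(ii) to factor off the relevant line in part~(2)(ii), the role of Hermitianity in pinning down $\tau\cong\bar\tau$ and $\pi'\cong(\pi')^{+}$ after that factorization, and the observation that~(b) follows from~(a) plus the gap inequality. But the two gaps above would have to be filled before the argument is complete, and the first one requires replacing your stated mechanism, not merely supplementing it.
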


\proof
(1) Consider some $\rho_k$ such that $\rho^u\not\cong\tilde{\rho^u}$. Then (i) of Theorem 3.2 from \cite{T-CJM} and 
Theorem 7.5 of
 \cite{T86} imply that
$\pi$ is a subquotient  of a representation of the form
\begin{equation}
\label{first}
u(\d([\nu^{-(\ell_1-1)/2}\rho_k^u,\nu^{(\ell_1-1)/2}\rho_k^u]),\ell_2) \times
\rho_1'\times\dots\times\rho_i'\r\pi_{cusp},
\end{equation}
or of the form
\begin{equation}
\label{second}
\nu^\alpha 
u(\d([\nu^{-(\ell_1-1)/2}\rho_k^u,\nu^{(\ell_1-1)/2}\rho_k^u]),\ell_2)
\times
u(\d([\nu^{-(\ell_1-1)/2}\rho_k^u,\nu^{(\ell_1-1)/2}\rho_k^u]),\ell_2)
\times
\rho_i''\times\dots\times\rho_j''\r\pi_{cusp},
\end{equation}
where all the representations  $\rho_{i'}'$ and $\rho_{j'}'$ are irreducible
and cuspidal,  $0<\alpha<\frac12$ and $\rho_k$ is in the cuspidal support of the first 
factor of \eqref{first} or the first two factors of  \eqref{second}. 

Denote
$\ell_0=\ell_1+\ell_2$. 
There can be more possibilities for above representations \eqref{first} or \eqref{second}. In that case, we chose above representation with maximal possible $\ell_0$.

Suppose that $\rho_k$ is coming from \eqref{first}. 
First consider the case when $\ell_0$ is even. Then the
absolute values of the exponents  of cuspidal representations that show up in the cuspidal support of the first factor of \eqref{first} are 
containing the following sequence
$$
\textstyle
0,1,1,2,2,\dots
\frac{\ell_0-1}2-2, \frac{\ell_0-1}2-2,
\frac{\ell_0}2-1,\frac{\ell_0}2-1.
$$
(i.e. the exponents which are $\geq 1$ show up  at least two times each, and the exponent 0 shows up at least  once). 
Observe that if
we denote the above sequence by $\beta_1,\dots,\beta_l$, then 
$
\beta_i=
\frac i2
$
 for even indexes. Clearly, $|\b_1|=0\leq \frac12$. 
For odd $i>1$ we obviously have $|\b_i|=|\b_{i-1}|=\frac{i-1}2< \frac i2$.

Obviously $|e(\rho_k)|\leq \b_k$ if $k\leq \ell_0$. Therefore, then holds $|e(\rho_k)|\leq \frac k2$. If $k>\ell_0$, then the maximality of $\ell_0$ implies $|e(\rho_k)|\leq |e(\rho_{\ell_0})|\leq \frac {\ell_0} 2\leq \frac k2.$ This completes the proof of  the claim of (1) in this case.

Similarly goes the case when $\ell_0$ is odd. Then
absolute values of the exponents that show up in the case of the first factor of \eqref{first} now contain the sequence
$$
\textstyle
\frac12,\frac12,\frac32,\frac32,\dots
\frac{\ell_0}2-2, \frac{\ell_0}2-2,
\frac{\ell_0}2,\frac{\ell_0}2.
$$
Again we denote the above sequence by $\beta_1,\dots,\beta_l$, and now  holds
$
\beta_i=
\frac i2 $
if $i$ is odd. For even $i$ we have as above $|\b_i|=|\b_{i-1}|=\frac{i-1}2< \frac i2$.
Now we finish the proof of (1) in this case in the same way as in the previous case.

Suppose now that $\rho_k$ is in the cuspidal support  of the first two factors in
\eqref{second}. Consider first the case of even $\ell_0$. In this case the first two factors of \eqref{second} give exponents which contain the
following sequence
$$
\textstyle
\alpha,1-\alpha,1+\alpha,2-\alpha,2+\alpha,\dots,
\frac{\ell_0}2-1-\alpha,\frac{\ell_0}2-1+\alpha.
$$ 
Observe that each term of the above sequence is less then the corresponding term of the following sequence $\frac12,1,\frac32,2,\dots,\frac{\ell_0}2-1,\frac{\ell_0}2-\frac12.$ Now we end the proof in the same way as in previous cases.

 At the end consider the case of odd $\ell_0$. In this case the first two  factors
of \eqref{second} give the a sequence of exponents that contains the following exponents
$$
\textstyle
\frac12-\alpha,\frac12+\alpha,\frac32-\alpha,\frac32\alpha,2+\alpha,\dots,
\frac{\ell_0-1}2-\alpha,\frac{\ell_0-1}2+\alpha.
$$ 
Again we use the sequence $\frac12,1,\frac32,2,\dots,\frac{\ell_0}2-1,\frac{\ell_0}2-\frac12$ in the same way as above, and complete the proof.

\noindent 
(2) The first inequality in (2), $\a_i-\a_{i-1}\leq 1$, is proved in Proposition 3.5 of \cite{T-CJM}.

Now we shall first prove the first inequality in (ii). Suppose that for all $\rho_i$ holds $|e(\rho_i)|>
\alpha_{\rho^u,\pi_{cusp}}$ (we here assume $\alpha_{\rho^u,\pi_{cusp}} \geq\frac12$). Denote
$$
\rho_i'=
\begin{cases}
\tilde \rho_i, \quad e(\rho_i)<0
\\
\rho_i, \quad e(\rho_i)>0.
\end{cases}
$$
 Then by (ii) of Proposition \ref{CJM}, we can write $\pi\cong\tau\rtimes \pi'$, where cuspidal support
of $\tau$ is contained in $\{\rho_i', \dots, \rho_n'\}$,
and no one of the representations of $\{\rho_i', \dots, \rho_n'\}$ or their contragredients is a factor of
$\pi'$. Moreover, for any factor $\mu$ of $\pi'$ we have $\mu^u\not\cong
\rho^u$. We denote the Hermitian contragredient $\tilde{\bar \pi}$ by $\pi^+$. Since $\pi$ is unitarizable, it is a Hermitian representation, i.e. holds we $\pi\cong \pi^+$. Thus $\tau\rtimes \pi'\cong \tau^+\rtimes {\pi'}^+\cong
\bar\tau\rtimes {\pi'}^+.$

Observe that a representation in the cuspidal support of $\tau$ is of the form $\nu^\a\rho^u$, with $\a>0$. Now the complex conjugate of ${\nu^\a\rho^u}$ is isomorphic to $\nu^\a\bar{\rho^u}\cong \nu^\a\bar{\tilde{\rho^u}}\cong \nu^\a{\rho^u}$. From this follows that $\tau$ and $\bar\tau$ have the same cuspidal supports. Now the unicity claimed in (ii) of Proposition \ref{CJM} implies $\tau\cong\bar\tau$ and $\pi'\cong{\pi'}^+$.

Further,  by (i) of Proposition \ref{CJM}  $\nu^\alpha\tau\rtimes\pi'$ is irreducible for any
$\alpha\geq0$. Also $(\nu^\alpha\tau\rtimes\pi')^+\cong (\nu^\alpha\tau)^+\rtimes{\pi'}^+ \cong (\nu^\alpha\tau)^-\rtimes{\pi'} \cong \nu^\alpha\bar\tau\rtimes{\pi'}\cong \nu^\alpha\tau\rtimes{\pi'}$. Thus, $\nu^\a\tau\rtimes\pi'$, $\a\geq0$ is a family of irreducible Hermitian representations. Since it is continuous family and for $\a=0$ we have unitarizability, the whole family consists of unitarizable representations (see the construction (b)
from the third section of \cite{T-ext}).
This is impossible since this family is
not bounded (see \cite{T88}).  
Thus,
$|e(\rho_i)|\leq \alpha_{\rho^u,\pi_{cusp}}$ for for  at least one index  $i$. 

The proof of (a) goes in a similar way (we suppose that (a) does not hold,
and then we can construct complementary series which go to infinity, which is impossible). Now (b) follows from (a), and the first inequality in (2) ($\a_i-\a_{i-1}\leq 1$).

For (i), we first prove $\a_i\leq 1/2$ (for the proof, we suppose $\a_i> 1/2$, and form as above complementary series which go to infinity, which is impossible). After this, the first inequality of (2) implies the rest of (i).
\endproof

From the last proposition we see that one of the key information for understanding where unitarizability can show up in the parabolically induced representations is contained in bounds that we can get on cuspidal reducibility points $\a_{\rho,\pi_{cusp}}$. Now we shall turn our attention to that reducibility points.

One could get a bound in the following way. If the reducibility point is strictly positive, then we have a complementary series. The end of complementary series is a representation of length two, and both irreducible subquotients are unitarizable. Therefore, they have bounded matrix coefficients. Now using Casselman's asymptotics of matrix coefficients, one would get an explicit bound for the reducibility point. This bound might not be very accurate. One can get much more accurate estimate using the recent work of J. Arthur and C. M\oe glin. We shall use this approach. The references to their work, what we shall use, are now complete. A first general consequence of their work is that always
$$
\textstyle
\a_{\rho,\pi_{cusp}}\in (1/2)\Z.
$$

Now we shall recall of definition of Jordan blocks $Jord(\s)$ of an irreducible square integrable representation $\s$ of $S_q$ (slightly differently defined then originally by C. M\oe glin). In $Jord(\s)$ are irreducible selfdual square integrable representations of general linear groups. Such a representation $\tau=\delta(\rho,k)$ belongs to $Jord(\s)$ if and only if
$$
\d(\rho,k)\r\s
$$
is irreducible, and
$$
\d(\rho,l)\r\s
$$
is reducible for some $l$ of the same parity as $k$ (C. M\oe glin considers instead of a representation $\tau=\delta(\rho,k)$, pair $(\rho,k)$ which parameterize the square integrable representation).

In the rest of this section we assume additionally that
$$
\text{char}(F)=0.
$$
C. M\oe glin has proved that for an irreducible square integrable representation $\sigma$ of $S_q$ holds
\begin{equation}
\label{dimension}
\sum_{\d(\rho,k)\in Jord(\sigma)} k n_\rho=q^*,
\end{equation}
where $n_\rho$ is determined by the condition that $\rho$ is a representation of $GL(n_\rho,F)$, and further
where $q^*$ is the dimension of the vector space on which the dual group $^L(S_q)^{\hskip1pt0}$ acts (for  $Sp(2q,F)$, it is $q^*=2q+1$, and $2q$ in the case of $SO(2n+1,F)$).
For fixed $\rho$, $k$'s such that $\d(\rho,k)\in Jord(\s)$ are always of the same parity. 

Denote
$$
\text{max}_\rho(\s)=\max\{k;\delta(\rho,k)\in Jord(\sigma)\}
$$
in the case if the set on the right hand side  is non-empty. Otherwise, $\text{max}_\rho(\s)$ is not defined.

C. M\oe glin has proved that if $\s$ is cuspidal and $\d(\rho,k)\in Jord(\s)$ is such that $k\geq 3$, then $\d(\rho,k-2)\in Jord(\s)$.
Taking this into account, the equality \eqref{dimension} for cuspidal representation  $\s$ of  $S_q$ becomes
\begin{equation}
\label{main equation}
\textstyle
\sum_{\rho\,;\, \max_\rho(\sigma)\in 2\Z} \frac{\text{max}_\rho(\s)(\text{max}_\rho(\s)+2)}4 n_\rho+
\sum_{\rho\,;\, \max_\rho(\sigma)\in 1+2\Z} \frac{(\text{max}_\rho(\s)+1)^2}4 n_\rho=q^*.
\end{equation}
Now the basic assumption tells $\alpha_{\rho,\sigma}=(\text{max}_\rho(\s)+1)/2$ if $\max_\rho(\s)$ is defined (see \cite{Moe-T}). Then.
 $\text{max}_\rho(\s)=2\alpha_{\rho,\sigma}-1$, and this  implies
$$
\textstyle
\sum_{\rho\, ; \, \alpha_{\rho,\sigma}\geq 1 \, ; \, \alpha_{\rho,\sigma}\in\Z} (\alpha_{\rho,\sigma}^2-\frac14) n_\rho
 + \sum_{\rho\, ; \, \alpha_{\rho,\sigma}\geq 1 \, ; \, \a\not\in\Z}
 \alpha_{\rho,\sigma}^2 n_\rho
 =q^*.
$$
This directly implies the following

\begin{lemma}
\label{cuspidal bound} Let $\rho$ be an irreducible cuspidal self dual representation of $GL(p,F)$ and let $\sigma$ be an irreducible cuspidal representation of $S_q$ such that  $\a_{\rho,\s}\geq 1$. Then
$$
\textstyle
\alpha_{\rho,\s}^2\leq 
\begin{cases}
\frac{q^*}{p}, & \a_{\rho,\s}\in\Z;
\\
\frac{q^*}p+\frac14,  & \a_{\rho,\s}\not\in\Z.
\end{cases}
$$
\end{lemma}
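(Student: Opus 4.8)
The plan is to read off the bound directly from the displayed equation immediately preceding the lemma statement, namely
$$
\textstyle
\sum_{\rho\, ; \, \alpha_{\rho,\sigma}\geq 1 \, ; \, \alpha_{\rho,\sigma}\in\Z} (\alpha_{\rho,\sigma}^2-\tfrac14) n_\rho
 + \sum_{\rho\, ; \, \alpha_{\rho,\sigma}\geq 1 \, ; \, \alpha_{\rho,\sigma}\not\in\Z}
 \alpha_{\rho,\sigma}^2 n_\rho
 =q^*,
$$
which in turn follows from M\oe glin's structure theorem for Jordan blocks of cuspidal $\sigma$, the relation $\alpha_{\rho,\sigma}=(\mathrm{max}_\rho(\sigma)+1)/2$, and the dimension identity \eqref{dimension}. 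All of this is assumed available from the preceding discussion. So the real content of the lemma is just isolating the contribution of the one fixed $\rho$ we care about from this sum of nonnegative terms.

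First I would fix the given self-dual cuspidal $\rho$ of $GL(p,F)$ with $\alpha_{\rho,\sigma}\geq 1$; note that, since $\alpha_{\rho,\sigma}\geq 1$, this $\rho$ actually occurs as one of the summation indices on the left-hand side above (it lies in exactly one of the two sums according to whether $\alpha_{\rho,\sigma}\in\Z$ or not). Every summand on the left-hand side is nonnegative: in the integral case $\alpha_{\rho',\sigma}\geq 1$ forces $\alpha_{\rho',\sigma}^2-\tfrac14\geq \tfrac34>0$, and in the non-integral case $\alpha_{\rho',\sigma}^2>0$; also $n_{\rho'}\geq 1$ always. Hence the single term indexed by our fixed $\rho$ is bounded above by the whole sum, i.e.\ by $q^*$.

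Then I would split into the two cases of the lemma. If $\alpha_{\rho,\sigma}\in\Z$, the relevant term is $(\alpha_{\rho,\sigma}^2-\tfrac14)n_\rho = (\alpha_{\rho,\sigma}^2-\tfrac14)p \leq q^*$, hence $\alpha_{\rho,\sigma}^2 \leq q^*/p + \tfrac14$; wait—this gives the non-integral bound, so I need to be careful. Re-examining: when $\alpha_{\rho,\sigma}\in\Z$ the term is $(\alpha_{\rho,\sigma}^2-\tfrac14)p$, giving $\alpha_{\rho,\sigma}^2\leq q^*/p+\tfrac14$, whereas the lemma asserts the sharper $\alpha_{\rho,\sigma}^2\leq q^*/p$ in the integral case. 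The sharpening comes from observing that in the integral case $\alpha_{\rho,\sigma}^2-\tfrac14$ is not an arbitrary real but satisfies $\alpha_{\rho,\sigma}^2 p \leq (\alpha_{\rho,\sigma}^2-\tfrac14)p + \tfrac14 p$; since $\alpha_{\rho,\sigma}^2 p$ is... actually the clean argument is: $(\alpha_{\rho,\sigma}^2 - \tfrac14)p \le q^*$ and I want $\alpha_{\rho,\sigma}^2 p \le q^*$. These are not equivalent, so the genuine integral-case improvement must use that the $\tfrac14 n_\rho$ terms, summed over the integral $\rho'$, themselves form a meaningful quantity — or, more simply, that when $\alpha_{\rho,\sigma}\in\Z$ one has $\mathrm{max}_\rho(\sigma)$ odd and the corresponding Jordan-block contribution in \eqref{main equation} is $\frac{(\mathrm{max}_\rho(\sigma)+1)^2}{4}n_\rho = \alpha_{\rho,\sigma}^2 n_\rho$, which is $\le q^*$ directly from \eqref{main equation} since all its summands are nonnegative. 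So for the integral case I would argue straight from \eqref{main equation} rather than from its rearranged form.

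The non-integral case then genuinely uses the rearranged identity (or equivalently the even-$\mathrm{max}_\rho$ term $\frac{\mathrm{max}_\rho(\sigma)(\mathrm{max}_\rho(\sigma)+2)}{4}n_\rho = (\alpha_{\rho,\sigma}^2-\tfrac14)n_\rho$ of \eqref{main equation}), giving $(\alpha_{\rho,\sigma}^2-\tfrac14)p\leq q^*$, i.e.\ $\alpha_{\rho,\sigma}^2\leq q^*/p+\tfrac14$. The main (minor) obstacle is precisely this bookkeeping: keeping straight which parity of $\mathrm{max}_\rho(\sigma)$ corresponds to $\alpha_{\rho,\sigma}$ being integral versus half-integral, and correspondingly which of the two summand-shapes in \eqref{main equation} applies, so that the $-\tfrac14$ appears in the non-integral case and is absent in the integral case — the rest is just positivity of the remaining terms.
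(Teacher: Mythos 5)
Your proof is correct and takes essentially the same approach as the paper, which derives the rearranged identity from \eqref{main equation} and then asserts it ``directly implies'' the lemma; you simply carry out that implication, by isolating the one summand indexed by the given $\rho$ and using nonnegativity of the other terms. You also correctly caught a slip in the paper's rearranged display: substituting $\mathrm{max}_\rho(\sigma)=2\alpha_{\rho,\sigma}-1$ into \eqref{main equation} shows that $\alpha_{\rho,\sigma}\in\Z$ corresponds to \emph{odd} $\mathrm{max}_\rho(\sigma)$ and hence to the summand $\alpha_{\rho,\sigma}^2 n_\rho$, while $\alpha_{\rho,\sigma}\notin\Z$ corresponds to \emph{even} $\mathrm{max}_\rho(\sigma)$ and the summand $(\alpha_{\rho,\sigma}^2-\tfrac14)n_\rho$, so the two summation conditions in the paper's displayed identity just before the lemma are inadvertently swapped, and your decision to read the bound straight off \eqref{main equation} is exactly the right fix.
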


\begin{definition}
Let $\pi$ be an irreducible  representation of $S_n$. Then $\pi$ is a subquotient of a representation of the form
$$
\rho_1\times\dots\times\rho_k\rtimes \sigma,
$$
where $\rho_i$ are irreducible cuspidal representations of general linear groups and $\sigma$ is an 
irreducible cuspidal representation of some $S_q$
such that $e(\rho_1)\geq e(\rho_2)\geq \dots\geq e(\rho_k)\geq 0$.
The $n$-tuple
$$
(\underbrace{e(\rho_1),\dots,e(\rho_1)}_{n_{\rho_1}-\text{times}},\underbrace{e(\rho_2), \dots,e(\rho_2)}_{n_{\rho_2}-\text{times}}, \dots, \underbrace{e(\rho_k), \dots,e(\rho_k)}_{n_{\rho_k}-\text{times}},\underbrace{0,\dots,0}_{q-\text{times}})
$$
is uniquely determined by $\pi$, and it is denoted by
$$
||\pi||.
$$
\end{definition}

The trivial (one-dimensional) representation of a group $G$ will be denoted by $\mathbf 1_G$.

Bellow we shall restrict to the groups $Sp(2n,F)$ and $SO(2n+1,F)$ (but we expect that the following statements also hold for general classical groups, what should be relatively easy too check).

\begin{lemma}
Let $\rho $  be an irreducible selfual cuspidal representation of $GL(p,F)$ and let  $\sigma$ be an irreducible cuspidal representation of $Sp(2q,F)$ or $SO(2q+1,F)$.  Consider the subgroup $X:=\{(a,1,1,\dots,1,1,a^{-1});a\in F^\t\}$ of $S_{q+1}$. Denote $e_q:=e(\d_{P_{min}}^{1/2}|X)$, where $\d_{P_{min}}$ denotes the modular character of $\d_{P_{min}}$. Then
$$
\alpha_{\rho,\s}\leq e_q.
$$
\end{lemma}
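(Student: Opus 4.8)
The plan is to compare the reducibility point $\alpha_{\rho,\sigma}$ with the explicit quantity $e_q$ by evaluating the latter. First I would compute $e_q$ concretely. The modular character $\delta_{P_{\min}}$ of the minimal parabolic of $S_{q+1}$, restricted to the one-parameter subgroup $X = \{(a,1,\dots,1,a^{-1})\}$, is a power of $|a|_F$; a direct root-count (the element $(a,1,\dots,1,a^{-1})$ pairing against the sum of positive roots of $Sp(2(q+1),F)$ or $SO(2(q+1)+1,F)$) gives $e(\delta_{P_{\min}}^{1/2}|X) = q + \tfrac12 q^\ast_0$ for a suitable correction term; in fact the point is simply that $e_q$ is the first coordinate of $||\mathbf 1_{S_{q+1}}||$-type normalization, namely $e_q = q + c$ where $c = \tfrac12$ for $Sp$ and $c=0$ (or the analogous half-integer) for $SO(2q+1)$, so that in particular $e_q^2 \geq q^\ast/p$ whenever $p\geq 1$, using $q^\ast \leq 2q+1$. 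The comparison to be made is thus between Lemma~\ref{cuspidal bound}, which bounds $\alpha_{\rho,\sigma}^2$ by $q^\ast/p$ (or $q^\ast/p + \tfrac14$ in the non-integral case), and the computed value of $e_q^2$.

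The key steps, in order, are: (1) evaluate $e(\delta_{P_{\min}}^{1/2}|X)$ explicitly as a function of $q$ (and of which group $S_{q+1}$ we are in) by the standard half-sum-of-roots computation — this is the routine bookkeeping step; (2) invoke Lemma~\ref{cuspidal bound} to get $\alpha_{\rho,\sigma}^2 \leq q^\ast/p$ when $\alpha_{\rho,\sigma}\in\mathbb Z$ and $\alpha_{\rho,\sigma}^2 \leq q^\ast/p + \tfrac14$ when $\alpha_{\rho,\sigma}\notin\mathbb Z$ (noting $\alpha_{\rho,\sigma}\in\tfrac12\mathbb Z$ always, by the Arthur--M\oe glin input already recalled); (3) check the elementary inequality $q^\ast/p \leq e_q^2$ (resp. $q^\ast/p + \tfrac14 \leq e_q^2$ in the half-integral case) using $q^\ast = 2q+1$ for $Sp(2q,F)$, $q^\ast = 2q$ for $SO(2q+1,F)$, $p\geq 1$, and the formula for $e_q$; (4) conclude $\alpha_{\rho,\sigma}\leq e_q$, treating the case $\alpha_{\rho,\sigma} < 1$ (where Lemma~\ref{cuspidal bound} does not apply) separately and trivially, since then $\alpha_{\rho,\sigma} < 1 \leq e_q$ for $q\geq 1$, and for $q=0$ one uses $\alpha_{\rho,\sigma}\leq \tfrac12$ directly (the $Sp(0,F)$ case), matching $e_0 = \tfrac12$.

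I expect the main obstacle to be step~(3) in the non-integral case: there $\alpha_{\rho,\sigma}^2$ is only bounded by $q^\ast/p + \tfrac14$, and one must verify this still does not exceed $e_q^2$. Since $\alpha_{\rho,\sigma} \in \tfrac12 + \mathbb Z$ in that case and $\alpha_{\rho,\sigma}\geq 1$ forces $\alpha_{\rho,\sigma}\geq \tfrac32$, the quantity $\alpha_{\rho,\sigma}^2 - \tfrac14 = (\alpha_{\rho,\sigma}-\tfrac12)(\alpha_{\rho,\sigma}+\tfrac12)$ is an integer, so really $\alpha_{\rho,\sigma}^2 - \tfrac14 \leq q^\ast/p$, i.e. $\alpha_{\rho,\sigma} \leq \sqrt{q^\ast/p + \tfrac14}$; one then checks $\sqrt{q^\ast/p + 1/4} \leq e_q = q + c$, which for $p\geq 1$, $q^\ast\leq 2q+1$ and $q\geq 1$ reduces to the manifestly true $2q + 1 + \tfrac14 \leq (q+c)^2$. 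Assembling these pieces gives $\alpha_{\rho,\sigma}\leq e_q$ in all cases. The only genuinely delicate point is getting the root-theoretic computation of $e_q$ exactly right for each of the two families of groups, and making sure the edge cases $q = 0, 1$ are covered; everything else is a one-line inequality chase.
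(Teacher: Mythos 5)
Your strategy is exactly the paper's: compute $e_q$ by the half-sum-of-roots calculation, apply Lemma~\ref{cuspidal bound} to bound $\alpha_{\rho,\sigma}^2$, and compare. But the explicit values you state for $e_q$ are wrong, and with those values the inequality chase in your step~(3) actually fails. You claim $e_q = q + c$ with $c=\tfrac12$ for $Sp$ and $c=0$ for $SO(2q+1)$; the correct values are $e_q = q+1$ for the symplectic case (half-sum of positive roots of $C_{q+1}$ evaluated at $(a,1,\dots,1,a^{-1})$ is $q+1$) and $e_q = q+\tfrac12$ for the odd-orthogonal case (type $B_{q+1}$ gives $q+\tfrac12$). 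You are off by $\tfrac12$ in both families. With your values, already the integral case breaks: e.g.\ for $Sp$ at $q=1$, $p=1$, Lemma~\ref{cuspidal bound} only gives $\alpha_{\rho,\sigma}^2 \le 2q+1 = 3$, but $(q+\tfrac12)^2 = \tfrac94 < 3$, so the chain $q^\ast/p \le e_q^2$ does not hold. With the correct $e_q = q+1$ one has $2q+1 \le (q+1)^2$ for all $q\ge 0$, and the argument goes through; note also that your own normalization remark --- that $e_q$ should be the leading coordinate of $\|\mathbf 1_{S_{q+1}}\|$ --- already gives $q+1$ and $q+\tfrac12$, contradicting the $c$-values you then wrote down. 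Two small further remarks: (i) your observation that $\alpha^2 - \tfrac14$ is an integer in the half-integral case is just a rederivation of Lemma~\ref{cuspidal bound}, not a strengthening, so it buys nothing; (ii) your separate treatment of $\alpha_{\rho,\sigma}<1$ (where Lemma~\ref{cuspidal bound} is vacuous) is a genuine point of care the paper glosses over, and it does work with the correct $e_q$, since $e_q\ge \tfrac12$ always.
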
 

\begin{proof} Consider first the case of symplectic groups. Then $e_q=q+1$. If $\alpha_{\rho,\s}\in \Z$, then by Lemma \ref{cuspidal bound} holds $\alpha_{\rho,\s}^2\leq \frac{2q+1}p\leq2q+1\leq(q+1)^2$, which implies the inequality in the lemma.
 If $\alpha_{\rho,\s}\not\in \Z$, then by Lemma  \ref{cuspidal bound} holds $(\alpha_{\rho,\s}^2-\frac12)p\leq2q$ since $\alpha_{\rho,\s}^2-\frac12$ is even number in that case. Thus $\alpha_{\rho,\s}^2\leq\frac{2q}p+\frac12\leq (q+1)^2$, which completes the proof in this case. 
 
In the case of odd orthogonal groups we have $e_q=q+\frac12$. Now Lemma  \ref{cuspidal bound} implies $\alpha_{\rho,\s}^2\leq \frac{2q}p+\frac14\leq2q+\frac14\leq(q+\frac12)^2$, which again implies the inequality in the lemma.
\end{proof}

\begin{theorem}
\label{bound by trivial} Suppose char$(F)=0$.
Let $\pi$ be an irreducible unitarizable representation of $G=Sp(2n,F)$ or $G=SO(2n+1,F)$. Then 
$$
||\pi||\leq_s ||\mathbf 1_G||,
$$
and the equality holds if and only if $\pi$ is a twist by a character of the trivial representation or a twist by a character of the Steinberg representation.

\end{theorem}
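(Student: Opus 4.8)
The plan is to reduce the estimate $\|\pi\|\leq_s\|\mathbf 1_G\|$ to the bounds already collected in this section: Proposition~\ref{bounds-general}, which controls how far from the unitary axis the exponents of the factors of $\pi$ can sit, and the two lemmas above, which bound each cuspidal reducibility point $\alpha_{\rho,\pi_{cusp}}$ by the corresponding coordinate $e_q$ of $\tfrac12\log_{q_F}$ of the modular character. Write $\pi$ as a subquotient of $\rho_1\times\dots\times\rho_k\rtimes\sigma$ with $e(\rho_1)\geq\dots\geq e(\rho_k)\geq 0$ as in the definition of $\|\pi\|$, and let $\sigma$ be an irreducible cuspidal representation of $S_m$. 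The coordinates of $\|\pi\|$ are the numbers $e(\rho_j)$ (each repeated $n_{\rho_j}$ times), followed by zeros; the coordinates of $\|\mathbf 1_G\|$ are $n,n-1,\dots,1$. So I must show: after sorting the multiset of all $|e(\rho)|$'s (with multiplicities) into a weakly decreasing list $a_1\geq a_2\geq\dots$, one has $a_i\leq n+1-i$ for every $i$, with equality throughout exactly for the trivial and Steinberg representations (up to unramified twist).

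First I would treat the factors $\rho$ with $\rho^u\not\cong\widetilde{\rho^u}$: by part (1) of Proposition~\ref{bounds-general}, if there are $n'$ such factors with a fixed $\rho^u$, sorted so $|e(\rho_1)|\leq\dots\leq|e(\rho_{n'})|$, then $|e(\rho_i)|\leq i/2$; summing the "sizes" $n_{\rho}$ these contribute, one sees the corresponding block of $\|\pi\|$-coordinates is dominated coordinatewise by $(\dots,\tfrac32,1,\tfrac12)$, which is well below the tail of $\|\mathbf 1_G\|$. The main work is the self-dual case, $\rho^u\cong\widetilde{\rho^u}$: here part (2) of Proposition~\ref{bounds-general} says the exponents exceeding $\alpha:=\alpha_{\rho^u,\pi_{cusp}}$ form a set $\{\alpha_1<\dots<\alpha_\ell\}$ with consecutive gaps $\leq 1$, and either $\alpha=0$ with $\alpha_i\leq i-\tfrac12$, or $\alpha\geq\tfrac12$ with $\alpha_i\leq\alpha^{(\pi)}+i\leq\alpha+i$. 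In all cases the largest exponent attached to this $\rho$ is at most $\alpha_{\rho^u,\pi_{cusp}}+(\text{number of them})$, and the full decreasing list of exponents for this $\rho$ is dominated by $(\alpha+\ell,\dots,\alpha+1,\alpha,\alpha^{(\pi)},\dots)$ with the low end bounded by $\alpha$. Now invoke the second lemma above: $\alpha_{\rho^u,\pi_{cusp}}\leq e_m$ where $e_m=m+1$ (symplectic) or $m+\tfrac12$ (odd orthogonal). Combining, the largest $|e(\rho)|$ is at most $e_m$ plus the number of factors stacked above it. The dimension equation \eqref{dimension}/\eqref{main equation}—equivalently the bookkeeping that $\sum n_\rho\cdot(\#\{\rho_j\text{-factors}\})$ plus $2m$ (or $2m+1$) equals the rank data $2n$ (or $2n+1$)—then forces the total number of coordinates above the unitary axis, together with $m$, to be small enough that each sorted coordinate $a_i$ falls at or below $n+1-i$. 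Concretely, if $a_1$ is the top exponent it is attached to some self-dual $\rho$ of $GL(p,F)$ with $\alpha_{\rho,\pi_{cusp}}\leq e_m$ and there are at most $n-m$ "slots" of $GL$-type available (since the $GL$-part has total size $n-m$), so $a_1\leq e_m+((n-m)/p-1)\leq n$ after using $e_m=m+1$ and $p\geq 1$; and the same slot-counting pushed down the list gives $a_i\leq n+1-i$.

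For the equality case I would run the inequalities backwards. Equality $a_i=n+1-i$ for all $i$ forces, in particular, $k$ slots all of size $1$ with $\rho=\mathbf 1_{GL(1,F)}$ (up to unramified twist), $m=0$, the reducibility point $\alpha_{\rho,\sigma}$ at its extreme value, and the exponents forming the full staircase $(n,n-1,\dots,1)$—so $\pi$ is a subquotient of $\nu^n\times\nu^{n-1}\times\dots\times\nu\rtimes\mathbf 1_{S_0}$, i.e. of $\nu^{(n-1)/2}\cdot\mathrm{St}$-type induced data; the only irreducible unitarizable subquotients with $\|\cdot\|$ equal to $(n,\dots,1)$ are the Langlands quotient (the trivial representation) and the generic subquotient (the Steinberg representation), up to unramified twist. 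Conversely $\|\mathbf 1_G\|=\|\mathrm{St}_G\|=(n,n-1,\dots,1)$ is immediate since $\mathrm{St}_G\hookrightarrow\nu^{-(n-1)/2}\times\dots\times\nu^{(n-1)/2}\rtimes\mathbf 1$ with the same cuspidal exponents. The main obstacle I anticipate is making the slot-counting in the self-dual case fully rigorous when several distinct self-dual $\rho$'s occur simultaneously and $\sigma$ is a nontrivial cuspidal representation with its own Jordan blocks: one must check that "spending" rank on a large $\alpha_{\rho,\sigma}$ genuinely removes enough of the remaining $GL$-budget, which is exactly what equation \eqref{main equation} encodes, but turning that into the coordinatewise $\leq_s$ statement for the sorted list—rather than just a bound on the top coordinate—requires a careful induction on the number of factors.
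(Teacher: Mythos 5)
Your reduction to Proposition~\ref{bounds-general} together with the lemma $\alpha_{\rho,\pi_{cusp}}\leq e_q$ is exactly the paper's starting point, and your equality analysis (forcing $q=0$, $n_{\rho_i}=1$, the full staircase of exponents, hence the Langlands quotient and Steinberg as the only unitarizable subquotients, via \cite{Ca81}) is essentially the paper's as well, except you should add the odd-orthogonal case, where a sign twist $\varepsilon$ with $\varepsilon^2=1$ enters.

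The genuine gap is the one you flag yourself: the ``slot-counting'' step that turns the per-factor bounds into the coordinatewise $\leq_s$ statement for the \emph{sorted} list. Your verification $a_1\leq e_m+((n-m)/p-1)\leq n$ only controls the top coordinate, and the way you propose to push it down -- appealing to the dimension equation \eqref{main equation} -- actually conflates two different counts: \eqref{main equation} is a constraint on $\mathrm{Jord}(\sigma)$ and is already spent in deriving the lemma $\alpha_{\rho,\sigma}\leq e_q$; it is not available again as a budget on how many cuspidal exponents of $\pi$ sit above the unitary axis. The count you really need is just $\sum_\rho n_\rho\cdot\#\{\rho\text{-factors}\}+q=n$. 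The paper closes the gap by a cleaner device. For a non-self-dual family, Proposition~\ref{bounds-general}(1) dominates the sorted exponents by the half-integer staircase $(\tfrac r2,\dots,1,\tfrac12)$. For a self-dual family, one appends the $q$ trailing zeros of $\|\pi\|$ to that family's exponents and shows that this \emph{combined} list is dominated by a single integer staircase $(r,r-1,\dots,\epsilon+1,\epsilon)$ with $\epsilon=1$ (symplectic) or $\tfrac12$ (odd orthogonal): this works precisely because $\alpha_{\rho,\sigma}\leq e_q=q+\epsilon$ and part~(2)(ii) of Proposition~\ref{bounds-general} gives $\alpha_i\leq\alpha^{(\pi)}_{\rho,\pi_{cusp}}+i$ together with the existence of at least one exponent $\leq\alpha_{\rho,\sigma}$. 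Once each block (with the zeros attached to the self-dual one) sits under a staircase whose bottom is $\geq\tfrac12$ (or $\geq1$ where it matters), whose step is $\leq1$, and whose total length is $n$, the merge-sorted concatenation is automatically $\leq_s(n,n-1,\dots,1)=\|\mathbf 1_G\|$; this is the Lemma~\ref{lemma-ineq}-style fact that sorting a concatenation of staircases of total length $n$ cannot exceed the single length-$n$ staircase. So your approach is correct in spirit and uses the same inputs, but the combining step needs the ``zeros appended to the self-dual block'' trick rather than direct slot counting, which is where your proposal stays incomplete.
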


\begin{proof}
If $\pi$ is cuspidal, the theorem obviously holds (in particular, the theorem holds for $n=0$). It remains to consider the case of  non-cuspidal representation  $\pi$ of some $S_n$, $n\geq 1$. 

Let $\pi$ be a (non-cuspidal)  unitarizable irreducible subquotient of $\rho_1\t\dots\t\rho_k\r\s$ such that $e(\rho_1)\geq e(\rho_2)\geq \dots\geq e(\rho_k)\geq 0$. Fix some $\rho_{i_0}$, and let $\rho_1',\dots,\rho_{k'}'$ be a subsequence of all $\rho_i$ such that $\rho_i^u\cong \rho_{i_0}^u$ (we continue to  assume $e(\rho_1')\geq e(\rho_2')\geq \dots\geq e(\rho_{k'}')$).

Suppose $\rho_{i_0}\not\cong\tilde\rho_{i_0}$. Then (1) of Proposition \ref{bounds-general} obviously implies
$$
\textstyle
(\underbrace{e(\rho_1'),\dots,e(\rho_1')}_{n_{\rho_1'}-\text{times}},\underbrace{e(\rho_2'), \dots,e(\rho_2')}_{n_{\rho_2'}-\text{times}}, \dots, \underbrace{e(\rho_{k'}'), \dots,e(\rho_{k'}')}_{n_{\rho_{k'}'}-\text{times}}
) \leq_s (\frac r2,\frac{r-1}2,\dots,\frac32,1,\frac12)
$$
for appropriate $r$.

Suppose now  $\rho_{i_0}\cong\tilde\rho_{i_0}$. Since by the above lemma  $\alpha_{\rho,\s}\leq e_q$, now (2) of Proposition \ref{bounds-general} obviously implies
$$
(\underbrace{e(\rho_1'),\dots,e(\rho_1')}_{n_{\rho_1'}-\text{times}},\underbrace{e(\rho_2'), \dots,e(\rho_2')}_{n_{\rho_2'}-\text{times}}, \dots, \underbrace{e(\rho_{k'}'), \dots,e(\rho_{k'}')}_{n_{\rho_{k'}'}-\text{times}}
,\underbrace{0,\dots,0}_{q-\text{times}}
) 
\hskip30mm
$$
$$
\hskip90mm
\leq_s (r,r-1,\dots, \e+2,\e+1,\e)
$$
for appropriate $r$ and $\e=1$ (resp. $\e=\frac12$) if $S_n=Sp(2n,F)$ (resp. $S_n=SO(2n+1,F)$).

The two above relations directly imply the inequality of the theorem.

Regarding equality, let us first consider the symplectic case.
To get the equality, in all the above sequence of inequalities we must have always equalities. This implies  that we must have  $q=0$,  $n_{\rho_i}=1$ for all $i$, $k=k'$ and $\rho_k=\nu\mathbf1_{F^\times}$. This further implies that we must have $\rho_i=\nu^{k+1-i}1_{F^\times}$ for all the other indexes. At the corresponding induced representation we have precisely two unitarizable subquotients by  \cite{Ca81}, the trivial and the Steinberg representation.

Similarly in the odd-orthogonal case, to get equalities at all the steps, we must have $q=0$,  $n_{\rho_i}=1$ for all $i$, $k=k'$, $\rho_k=\e\nu\mathbf1_{F^\times}$, with $\e^2\equiv 1$. This  further implies $\rho_i=\nu^{k+1-i}\e \mathbf 1_{F^\times}$ for all the other indexes. At the corresponding induced representation we have precisely two unitarizable subquotients, the irreducible quotient and the irreducible subrepresentation (besides \cite{Ca81}, see also \cite{HT} and \cite{HJ}).
\end{proof}

It is evident from the  proof  of the above theorem that we can give much more accurate upper bound if we know by which parabolic subgroup is supported irreducible unitary representation.
The following theorem is a result in that direction, which has the same proof as the previous theorem:

\begin{theorem} Assume char$(F)=0$.
Let $\pi$ be an irreducible unitarizable representation of $S_n$ supported by a parabolic subgroup whose Levi factor is isomorphic to
$$
GL(p_1,F)^{n_1}\t\dots GL(p_k,F)^{n_k}\t S_q,
$$ 
where $n_i\ne n_j$ for $i\ne j$.
Let $\pi$ be an irreducible unitarizable subquotient of $\rho_1\t\dots\t\rho_k\r\s$. We chose $\rho_i$ such that all $e(\rho_i)\geq0$. Fix some index ${i_0}$, and let $\rho_1',\dots,\rho_{n_{i_0}}'$ be a subsequence of all $\rho_i$  which are representations of $GL(n_{i_0},F)$. After a renumeration, we can assume $e(\rho_1')\geq e(\rho_2')\geq \dots\geq e(\rho_{n_{i_0}}')(\geq 0)$.
Then  
$$
\textstyle
(e(\rho_1'),e(\rho_2'), \dots,e(\rho_{i_0}'))
 \leq_s (r,r-1,\dots,c+1,c)
$$
for appropriate $r$, where
$$
\textstyle
c=\max\{t\in (1/2)\Z; t\leq \sqrt{\frac{q^*}{n_{i_0}}+\frac14}\}.
$$

\end{theorem}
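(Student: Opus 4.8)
The plan is to mimic the proof of Theorem~\ref{bound by trivial} verbatim, the only change being that we now exploit the stronger hypothesis on the parabolic subgroup. Since $\pi$ is supported by the parabolic with Levi $GL(p_1,F)^{n_1}\t\dots\t GL(p_k,F)^{n_k}\t S_q$, write $\pi$ as a unitarizable irreducible subquotient of $\rho_1\t\dots\t\rho_k\r\s$ with all $e(\rho_i)\geq 0$, and fix the index $i_0$. Let $\rho_1',\dots,\rho'_{n_{i_0}}$ be the subsequence of the $\rho_i$ that are representations of $GL(n_{i_0},F)$, renumbered so that $e(\rho_1')\geq\dots\geq e(\rho'_{n_{i_0}})\geq 0$. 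The crucial structural point is that, because $n_i\ne n_j$ for $i\ne j$, every $\rho_i'$ occurs with multiplicity $n_{i_0}$ inside the inducing datum (the multiplicity $n_{\rho_i'}$ in the Definition equals $n_{i_0}$); so in forming $||\pi||$ these exponents appear in blocks of exactly $n_{i_0}$ equal entries, but for the $\leq_s$ estimate we only need to control the distinct values $e(\rho_j')$ themselves.

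Next, partition the $\rho_j'$ according to the class $(\rho_j')^u$. For a class with $(\rho_j')^u\not\cong\widetilde{(\rho_j')^u}$, part~(1) of Proposition~\ref{bounds-general} gives $|e(\rho_j')|\leq j/2$ for the relevant subindexing, hence this block is $\leq_s (\tfrac r2,\dots,1,\tfrac12)$, which is dominated by $(r,r-1,\dots,c+1,c)$ since $c\leq\tfrac12$ only in degenerate cases and in general $c$ is at least the start of the half-integer sequence. For a class with $(\rho_j')^u\cong\widetilde{(\rho_j')^u}$, apply part~(2) of Proposition~\ref{bounds-general}: using Lemma~\ref{cuspidal bound} with $p=n_{i_0}$ and the basic fact $\alpha_{\rho,\s}\in(1/2)\Z$, one gets $\alpha_{\rho,\s}^2\leq q^*/n_{i_0}$ when $\alpha_{\rho,\s}\in\Z$ and $\alpha_{\rho,\s}^2\leq q^*/n_{i_0}+\tfrac14$ when $\alpha_{\rho,\s}\notin\Z$; in either case $\alpha_{\rho,\s}\leq\sqrt{q^*/n_{i_0}+1/4}$, so $\alpha_{\rho,\s}\leq c$ by the definition of $c$ as the largest half-integer not exceeding that quantity. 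Feeding this bound on $\alpha_{\rho^u,\pi_{cusp}}$ into the conclusions of Proposition~\ref{bounds-general}(2) (the step-size inequality $\alpha_i-\alpha_{i-1}\leq 1$, together with (2)(i) or (2)(ii)(a)--(b) depending on whether $\alpha_{\rho^u,\pi_{cusp}}$ is $0$, a positive half-integer, or a positive integer) yields that the decreasing sequence of the $|e(\rho_j')|$ in this block is $\leq_s (r,r-1,\dots,c+1,c)$ for appropriate $r$.

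Finally, since the whole list $e(\rho_1')\geq\dots\geq e(\rho'_{n_{i_0}})$ is the merge of these per-class subsequences, and each subsequence is dominated in $\leq_s$ by an initial-segment-like sequence ending at $c$ (or lower, in the non-selfdual case), the merged sequence is $\leq_s (r,r-1,\dots,c+1,c)$ for a suitably large $r$ — here one uses that $\leq_s$ is compatible with interleaving decreasing sequences each bounded by a ``staircase'' with common bottom step $c$ and unit gaps. I expect the only genuinely delicate point to be the bookkeeping that glues the several $(\rho_j')^u$-classes together into one staircase bound with the single constant $c$: one must check that the worst case is when all $\rho_j'$ lie in one selfdual class, and that the non-selfdual half-integer staircase never protrudes above the selfdual integer/half-integer staircase of height $c$. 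This is exactly the reasoning already carried out in the proof of Theorem~\ref{bound by trivial}, specialized to a single $GL$-block, so the argument is ``the same proof as the previous theorem'' as claimed, with $q^*$ and $n_{i_0}$ replacing the global quantities and $c$ extracted from Lemma~\ref{cuspidal bound}.
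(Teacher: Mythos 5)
Your proof is correct and follows exactly the approach the paper indicates — the paper itself says this theorem ``has the same proof as the previous theorem,'' and you faithfully reproduce that proof: split the exponents by unitary class, invoke Proposition~\ref{bounds-general}(1) for non-selfdual classes and Proposition~\ref{bounds-general}(2) together with Lemma~\ref{cuspidal bound} (which yields $\alpha_{\rho,\sigma}\leq c$) for selfdual classes, and then merge the resulting staircase bounds into one with bottom step $c$ and unit gaps.
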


\begin{example}
 Consider an example of the symplectic group where $k=1$, $p_1=2$, $n_1=5$ and  $q=6$. The above theorem gives the bound $(\frac{13}2,\frac{11}2,\frac92,\frac72,\frac52)$. In other words, 
$$
\textstyle
||\pi||\leq_s (\frac{13}2,\frac{13}2,\frac{11}2,\frac{11}2,\frac92,\frac{9}2,\frac72,\frac{7}2,\frac52,\frac{5}2,0,0,0,0,0,0).
$$
This is much sharper estimate then given by Theorem \ref{bound by trivial}, 
 which gives the bound 
$$
||\pi||\leq_s (16,15,\dots,2,1).
$$

\end{example}

At the end, the following theorem gives upper bounds for individual Bernstein components.

\begin{theorem} 
\label{thm-fixed-component} Let char$(F)=0$.
Fix an  irreducible cuspidal representation $\sigma$ of $S_q$. Let $\rho_i$ be  irreducible unitarizable cuspidal representations of $GL(p_i,F)$, $i=1,\dots,k$ such that $\rho_i\not\cong\nu^\b\rho_j$ for any $i\ne j$ and any $\b\in \mathbb C$, and let $n_1,\dots,n_k$ be positive integers.

Let $\b_{i,j}, 1\leq i\leq k, 1\leq j\leq n_i$ be a set of complex numbers such that
the representation
 $$
\nu^{\b_{1,1}}\rho_1\t\dots \t\nu^{\b_{1,n_1}}\rho_1\t\dots
$$
 $$
\dots\t \nu^{\b_{k,1}}\rho_1\t\dots \t\nu^{\b_{k,n_k}}\rho_1\r\s
$$
contains an irreducible unitarizable subquotient.
  Fix some index ${i}$. After a renumeration we can assume that for real parts of complex exponents hold $|\Re(\b_{i,1})| \geq  \dots\geq |\Re(\b_{i,n_i})|$. Denote by $X_i$ the set of all unramified  characters $\chi$ of $GL(p_i,F)$ such that $\chi\rho_i$ is a self dual representation. Then $X_i$ is a finite set. 
   If $X_i\ne\emptyset$, set
$$
c_i=\frac{1+\max\{\text{card}(Jord_{\chi\rho_i}(\s));\chi\in X_i\}}2.
$$
Then
$$
\textstyle
(|\Re(\b_{i,1})|,  \dots, |\Re(\b_{i,n_i})|)
 \leq_s (c_i+n_i-1,c_i+n_i-2,\dots,c_i+1,c_i)
$$
 if $X_i\ne\emptyset$, and 
$$
\textstyle
(|\Re(\b_{i,1})|,  \dots, |\Re(\b_{i,n_i})|)
 \leq_s (\frac {n_i}2,\frac{n_i-1}2,\dots,1,\frac12)
$$
if $X_i=\emptyset$.

\end{theorem}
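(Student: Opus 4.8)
Proof plan. The strategy is to reproduce the proof of Theorem \ref{bound by trivial} almost verbatim, the only structural change being that the crude estimate for the cuspidal reducibility used there (Lemma \ref{cuspidal bound}, via $\a_{\rho,\s}\le e_q$) is replaced by Mœglin's exact formula $\a_{\rho,\s}=(\mathrm{max}_\rho(\s)+1)/2$ together with her description of $Jord(\s)$ for a \emph{cuspidal} $\s$. Let $\pi$ be the irreducible unitarizable subquotient in the statement, so $\pi_{cusp}\cong\s$ and $\pi$ is a subquotient of $\nu^{\b_{1,1}}\rho_1\t\cdots\r\s$. Fix the index $i$. The trailing $\s$-part is irrelevant for what we estimate, so, exactly as in Theorem \ref{bound by trivial}, I would isolate the factors $\tau$ of $\pi$ whose unitarization $\tau^u$ is a unitary unramified twist of $\rho_i$; by the discussion of factors in Section 2 (for each $j$ at least one of $\nu^{\b_{i,j}}\rho_i$, $\widetilde{\nu^{\b_{i,j}}\rho_i}$ is a factor), the multiset of absolute values of exponents of these factors is, after collapsing the multiplicity $n_{\rho_i}$, precisely $\{|\Re(\b_{i,1})|,\dots,|\Re(\b_{i,n_i})|\}$, the imaginary parts of the $\b_{i,j}$ only determining on which unitary twist-line of $\rho_i$ a given factor sits. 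Also, an unramified $\chi$ with $\chi\rho_i$ self-dual satisfies $\widetilde{\rho_i}\cong\chi^2\rho_i$, so such $\chi$, if they exist, form a torsor under the finite group of unramified self-twists of $\rho_i$; hence $X_i$ is finite.

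Next I would partition the relevant factors of $\pi$ according to the isomorphism class of $\tau^u$ and treat each twist-line separately. If $\tau^u$ is a unitary unramified twist of $\rho_i$ which is \emph{not} self-dual, then $X_i=\emptyset$ forces this for every such line, so $\tau^u\not\cong\widetilde{\tau^u}$ holds and part (1) of Proposition \ref{bounds-general} bounds the $m$ exponents of that line, sorted decreasingly, by $(\tfrac m2,\tfrac{m-1}2,\dots,\tfrac12)$. If $\tau^u=\chi\rho_i$ with $\chi\in X_i$ (so $\tau^u$ self-dual), then part (2) of Proposition \ref{bounds-general} applies with $\a_{\rho^u,\pi_{cusp}}=\a_{\chi\rho_i,\s}$, and combining its three conclusions — $\a_j-\a_{j-1}\le1$; the existence of a factor with $|e|\le\a^{(\pi)}_{\chi\rho_i,\s}\le\a_{\chi\rho_i,\s}$; and $\a_j\le\a^{(\pi)}_{\chi\rho_i,\s}+j$ (or $\a_j\le j-\tfrac12$ when $\a_{\chi\rho_i,\s}=0$) — bounds the $m$ exponents of that line, sorted decreasingly, by $(\a_{\chi\rho_i,\s}+m-1,\dots,\a_{\chi\rho_i,\s}+1,\a_{\chi\rho_i,\s})$. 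The irreducibility inputs needed to invoke Proposition \ref{bounds-general} are Proposition \ref{CJM}, exactly as in Theorem \ref{bound by trivial}.

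The main step is then to show $\a_{\chi\rho_i,\s}\le c_i$ for every $\chi\in X_i$. Since $\s$ is cuspidal, the results recalled in this section give $\a_{\chi\rho_i,\s}=(\mathrm{max}_{\chi\rho_i}(\s)+1)/2$ whenever $Jord_{\chi\rho_i}(\s)\ne\emptyset$, and that $Jord_{\chi\rho_i}(\s)$ is a step-$2$ arithmetic progression ending at $1$ or $2$; reading off $\mathrm{max}_{\chi\rho_i}(\s)$ from $\mathrm{card}(Jord_{\chi\rho_i}(\s))$ and using $\a_{\chi\rho_i,\s}\in\tfrac12\Z$ yields $\a_{\chi\rho_i,\s}\le c_i$, while if $Jord_{\chi\rho_i}(\s)=\emptyset$ then $\a_{\chi\rho_i,\s}\le\tfrac12$, which is $\le c_i$ in the regime where this occurs. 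Feeding $\a_{\chi\rho_i,\s}\le c_i$ into the previous paragraph, each twist-line of $\rho_i$ is dominated by a step-$1$ arithmetic progression with bottom entry $\le c_i$ (note $c_i\ge\tfrac12$, so the non-self-dual lines are also so dominated). Merging the finitely many sorted sequences and re-sorting only strengthens these estimates — a decreasing sequence dominated by $(c_i{+}p{-}1,\dots,c_i)$ merged with one dominated by $(c_i{+}p'{-}1,\dots,c_i)$ is dominated by $(c_i{+}p{+}p'{-}1,\dots,c_i)$ — so altogether $(|\Re(\b_{i,1})|,\dots,|\Re(\b_{i,n_i})|)\leq_s(c_i+n_i-1,\dots,c_i+1,c_i)$; when $X_i=\emptyset$ only the non-self-dual case occurs and the same argument gives $(\tfrac{n_i}2,\dots,\tfrac12)$.

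I expect the one genuinely delicate point to be this main step: the precise translation between Mœglin's reducibility point $\a_{\chi\rho_i,\s}$, the invariant $\mathrm{max}_{\chi\rho_i}(\s)$ coming out of \eqref{main equation}, and $\mathrm{card}(Jord_{\chi\rho_i}(\s))$ for cuspidal $\s$, including the degenerate cases $\a_{\chi\rho_i,\s}\in\{0,\tfrac12\}$; this is where the exact constant $c_i$ is forced. Everything else is the same bookkeeping built on Propositions \ref{CJM} and \ref{bounds-general} already carried out for Theorem \ref{bound by trivial}; a minor additional point is to verify that the passage from ``exponents of factors'' to the $n_i$-tuple $(|\Re(\b_{i,j})|)_j$ — with the correct multiplicities, and with complex $\b_{i,j}$ replaced by their real parts using that $\pi\cong\pi^+$ — loses nothing in the $\leq_s$ estimate.
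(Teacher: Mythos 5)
Your proof architecture — partition the factors of $\pi$ by the isomorphism class of $\tau^u$, bound each block using Proposition \ref{CJM} and parts (1)/(2) of Proposition \ref{bounds-general}, then merge via an inequality of the type proved in Lemma \ref{lemma-ineq} — is the right one, and it mirrors the proof the paper gives for Theorem \ref{bound by trivial}. The merging step you sketch is correct, and the observation that $X_i$ is a torsor for the finite group of unramified self-twists of $\rho_i$ is also fine. The paper itself gives no written proof of this theorem, so the structural skeleton is the only thing to compare against, and yours matches.

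The step you flag as ``genuinely delicate'' is, however, not a delicate step but a false one with the constant $c_i$ as written. For cuspidal $\s$, Mœglin's downward-closedness gives $Jord_{\chi\rho_i}(\s)=\{1,3,\dots,\mathrm{max}_{\chi\rho_i}(\s)\}$ or $\{2,4,\dots,\mathrm{max}_{\chi\rho_i}(\s)\}$, so
$$
\a_{\chi\rho_i,\s}=\frac{\mathrm{max}_{\chi\rho_i}(\s)+1}2=
\begin{cases}
\text{card}(Jord_{\chi\rho_i}(\s)), & \a_{\chi\rho_i,\s}\in\Z,\\[2pt]
\text{card}(Jord_{\chi\rho_i}(\s))+\tfrac12, & \a_{\chi\rho_i,\s}\in\tfrac12+\Z.
\end{cases}
$$
Writing $m=\text{card}(Jord_{\chi\rho_i}(\s))$, the statement's $c_i=(1+m)/2$ satisfies $\a_{\chi\rho_i,\s}\le c_i$ only when $m\le 1$ and $\a_{\chi\rho_i,\s}\in\Z$. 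Already $\a_{\chi\rho_i,\s}=3/2$ gives $m=1$, $c_i=1<3/2$; and $\a_{\chi\rho_i,\s}=2$ gives $m=2$, $c_i=3/2<2$. In these cases the bound in the theorem is simply wrong: take $n_i=1$ and let $\b_{i,1}$ run through the complementary series $0<\Re(\b_{i,1})<\a_{\chi\rho_i,\s}$, which produces unitarizable subquotients with $|\Re(\b_{i,1})|>c_i$. So you cannot ``read off'' $\a_{\chi\rho_i,\s}\le c_i$ from $\text{card}$; the inequality fails, and with it the proposed proof.

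What would make the argument go through — and what the rest of your proof in fact establishes — is the bound $\a_{\chi\rho_i,\s}\le c_i$ with
$$
c_i=\frac{1+\max\{\max(Jord_{\chi\rho_i}(\s));\chi\in X_i\}}2
\qquad\text{(equivalently } c_i=\max_{\chi\in X_i}\a_{\chi\rho_i,\s}\text{),}
$$
i.e.\ replacing $\text{card}$ by the largest element of $Jord_{\chi\rho_i}(\s)$; or, equivalently, $c_i=\tfrac12+\max_{\chi\in X_i}\text{card}(Jord_{\chi\rho_i}(\s))$. With either of these your proof is correct and complete. With the constant as literally stated in the paper, your main step is an assertion you did not actually verify, and verifying it reveals that it fails; you should either record the corrected constant or note explicitly that the theorem's displayed formula for $c_i$ must be in error. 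One further small point you gloss over: when you collect factors $\tau$ with $\tau^u\cong\chi\rho_i$ for a fixed unitary unramified $\chi$, the factor attached to $\nu^{\b_{i,j}}\rho_i$ might be the contragredient rather than the representation itself, so the unitary twist $\chi$ read off from a given $\b_{i,j}$ is only determined up to $\chi\leftrightarrow$ the twist of $\widetilde{\rho_i}$; this does not affect $|\Re(\b_{i,j})|$ and hence not the final estimate, but it should be said if one wants the ``precisely $\{|\Re(\b_{i,j})|\}$'' claim to be airtight.
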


\section{Unramified unitary dual of $Sp(2n,F)$}

In this and the following section, $F$  denotes  a local non-archimedean field satisfying
$$
\text{char}(F)\ne2.
$$
 The ring of integers in $F$ is 
denoted by $\mathcal O_F$. We fix an uniformizing element of $\mathcal O_F$ and denote it
 by
$\varpi_F$. The normalized absolute value on $F$ is denoted by $|\
\ |_F$. Then  $|\varpi_F |_F=\text{card}(\mathcal O_F/\varpi_F\mathcal
O_F)^{-1}$.

Using the determinant
homomorphism, we identify characters of $F^\times=GL(1,F)$ with
characters of $GL(n,F)$. If $\varphi$ is a character of $GL(n,F)$, then
there exist a unique unitary character $\varphi^u$ of $GL(n,F)$ and
$e(\varphi)\in \mathbb R$ such that
$$
\varphi=\nu^{e(\varphi)}\varphi^u.
$$

The subgroup of all diagonal matrices in $Sp(2n,F)$ will be denoted by $A$. The mapping diag$(a_1,\dots,a_n,a_n^{-1},.\dots,a_1^{-1}) \mapsto (a_1,\dots,a_n)$ is an isomorphism of $A$ and $(F^\t)^n$, and using this isomorphism we identify these two groups.
The subgroup of all upper triangular unipotent
matrices in $Sp(2n,F)$ will be denoted by $N$. 

In $GL(n,F)$  we fix the maximal compact subgroup
$GL(n,\mathcal O_F)$ and in $Sp(2n,F)$
 the maximal compact subgroup $K_{\text{max}}\!=\!Sp(2n,F)\,\cap\,
GL(2n,\mathcal O_F)$is fixed.   An irreducible representation
$(\pi,V)$ of
$GL(n,F)$ or $Sp(2n,F)$ is called unramified if $V$ contains 
a non-trivial vector invariant for the action of the fixed maximal compact
subgroup. Then the space of
invariant vectors for the maximal compact subgroup is one dimensional.

For the group $G(F)$ of $F$-rational points of a reductive group $G$ defined
over
$F$, we denote the set of equivalence classes of irreducible smooth
representations by $\widetilde {G(F)}$. The subset of unitarizable classes
in
$\widetilde {G(F)}$ is denoted by $\widehat {G(F)}$. If a maximal compact
subgroup in
$G$ is fixed, then we denote by $\widetilde {G(F)}^{\mathbf 1}$ the set of all
unramified classes in $\widetilde {G(F)}$. We denote by $\widehat {G(F)}^{\mathbf 1}$
the unramified classes in $\widehat {G(F)}$, and call it unramified unitary
dual.

If we have  a smooth representation $\pi$ of $Sp(2n,F)$,
we denote by 
$$
s_{(1,\dots,1,0)}(\pi)
$$
the (normalized) Jacquet module of $\pi$ with respect  to
$P_{min}=AN$. It is a representation of
$A$, which we have identified  with $(F^\times)^n$. If $\tau$ is an irreducible subquotient of
$s_{(1,\dots,1,0)}(\pi)$, using the above  identification,  we can
write
$\tau$ as
$\tau_1\otimes\dots\otimes\tau_n$, where $\tau_i$ are
characters of $F^\t$. 
Now we shall recall of some  definitions from
\cite{Mu-no-un} in the case of $Sp(2n,F)$.

\begin{definition} Let $\pi$ be an irreducible unramified
representation of $Sp(2n,F)$. Then $\pi$ is called 
negative if for any irreducible subquotient
$\varphi=\varphi_1\otimes\dots\varphi_n$ of the
Jacquet module $s_{(1,\dots,1,0)}(\pi)$ 
we have
$$
\aligned
&e(\varphi_1)\leq 0,
\\
&e(\varphi_1) + e(\varphi_2)\leq 0,
\\
&\hskip20mm\vdots
\\
&e(\varphi_1) + e(\varphi_2) + \ \dots\ + e(\varphi_n)\leq 0.
\endaligned
$$
Further, $\pi$ will be called strongly negative if above all the  inequalities are
 strict.
\end{definition}

By 
$$
\text{Jord$_{\text{sn}}'(n)$}
$$
 will be denoted the collection  of all possible finite
sets
$J:=\{(\chi_1,m_1),\dots,(\chi_k,m_k)\}$\footnote{One  possibility would be to write instead of pairs $(\chi_i,m_i)$ unramified self dual characters $\chi_i\circ\det_{m_i}$ of $GL(m_i,F)$.} such that $\chi_i$ are self dual unramified  characters of $F^\t$ and $m_i$ are odd positive integers which satisfy 
$$
\textstyle
\text{$\sum_{i=1}^k m_i=2n+1$}.
$$

There are precisely two self dual unramified  characters of $F^\t$, the trivial character $\mathbf 1_{F^\t}$ and the non-trivial self dual unramified  character, which we denote by $\mathbf{sgn}_{F^\times}$.
For a self dual unramified  character $\chi$ of $F^\t$ and $J
=\{(\chi_1,m_1),\dots,(\chi_k,m_k)\}
\in \text{Jord}_{\text{sn}}'(n)$ we denote 
$$
J(\chi)=\{m_i;\chi_i=\chi\}.
$$
If we write $J(\chi)$ for $J\in $ Jord$_{\text{sn}}'(n)$, then $\chi$ will
be always assumed to be unramified selfdual character of $F^\times$.

We denote by
$$
\text{Jord$_{\text{sn}}(n)$}
$$
the set of all $J\in $ Jord$_{\text{sn}}'(n)$ such that $J(\mathbf{sgn}_{F^\times})$ has even cardinality.

Denote
$$
 {J}(\chi)'=
\begin{cases} 
\hskip9mm{J}(\chi), \hskip35mm \text{if $\chi=\mathbf{sgn}_{F^\times}$;}
\\
{J}(\chi)\cup\{-1\}, \hskip33mm\text{if $\chi=\mathbf{1}_{F^\times}$.}
\end{cases}
$$

To a character
$\chi$ of $F^\times$ and $r_1,r_2\in\mathbb R$ such that
$r_2-r_1\in \mathbb Z$, we attach representation
$$
\langle[\nu^{r_1}\chi,\nu^{r_2}\chi]\rangle:=
\nu^{(r_2+r_1)/2}\chi
\ \mathbf1_{GL(r_2-r_1+1,F)}
$$
if $r_2\geq r_1$ (we  use here  Zelevinsky notation:
$\langle[\nu^{r_1}\chi,\nu^{r_2}\chi]\rangle$ is characterized
as a unique irreducible subrepresentation of
$\nu^{r_1}\chi\times\nu^{r_1+1}\chi
\times\dots\times\nu^{r_2}\chi$). Otherwise, we take
$\langle[\nu^{r_1}\chi,\nu^{r_2}\chi]\rangle$ to be the trivial representation of the trivial group
$GL(0,F)$ (we consider formally this group as
$0\times0$~-~matrices).

\bigskip

For
$J\in $ Jord$_{\text{sn}}(n)$ write
$J(\chi)'=\{a_{2l_\chi}^{(\chi)},a_{2l_\chi-1}^{(\chi)},\dots,a_1^{(\chi)}\}$,
where
$$
a_{2l_\chi}^{(\chi)}>a_{2l_\chi-1}^{(\chi)}>\dots>a_1^{(\chi)}
$$
(if $J(\chi)=\emptyset$ we take $l_\chi=0$).
We define $\sigma({J})$ to be  the unique
irreducible unramified subquotient of
$$
\left(\underset{\chi}
\times\left(
\overset {l_\chi} 
{\underset{i=1}
\times} \
\langle[\nu^{-(a_{2i}^{(\chi)}-1)/2}\chi,
\nu^{(a_{2i-1}^{(\chi)}-1)/2}\chi]\rangle
\right)\right) \rtimes\mathbf1_{Sp(0,F)},
$$
where the first product runs over (two) unramified selfdual characters of
$F^\times.$ 

G. Mui\'c in \cite{Mu-no-un} has proved  the following explicit
classifications of strongly negative and negative irreducible unramified
representations:

\begin{theorem} (Mu2]) (i)
The mapping $J\mapsto \sigma(J)$ is a bijection
from Jord$_{\text{sn}}(n)$ on the set of all strongly negative
irreducible unramified representations of $Sp(2n,F)$.

\medskip
\noindent (ii)  
Suppose  $J \in
\text{Jord}_{\text{sn}}(m)$ and suppose that $\psi_1,\dots,\psi_l$ are 
unramified unitary characters
 of $GL(n_1,F),\dots,GL(n_l,F)$
respectively, such that $n_1+\dots+n_l+m=n$. Let
$\pi$ be the unique unramified irreducible subquotient (actually
subrepresentation) of
$$
\psi_1\times\dots\times\psi_l\rtimes\sigma({J}).
$$
Then $\pi$ is an
irreducible negative unramified representation
 of 
$G_n(F)$. Moreover, $\pi$ determines $J$
uniquely, and it determines characters $\psi_1,\dots,\psi_l$ up to a permutation
and changes
$\psi_i\leftrightarrow \psi_i^{-1}$. Further, each irreducible negative
unramified representation of $G_n(F)$ is equivalent to some representation
$\pi$ as above.
\end{theorem}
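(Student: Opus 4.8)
The statement is the one established by G.\ Mui\'c in \cite{Mu-no-un}; here is the line of proof one would follow. The first move is to translate the combinatorial definitions into the language of Casselman's criterion. The fundamental coweights of $Sp(2n,F)$ are $e_1+\dots+e_i$, so Casselman's criterion for square integrability (resp.\ temperedness) of an irreducible admissible representation is precisely the requirement that every exponent $\varphi_1\otimes\dots\otimes\varphi_n$ of $s_{(1,\dots,1,0)}$ satisfy $e(\varphi_1)+\dots+e(\varphi_i)<0$ (resp.\ $\le 0$) for all $i$. Hence ``strongly negative'' $=$ ``square integrable'' and ``negative'' $=$ ``tempered'' among irreducible unramified representations, and the theorem becomes a classification of the unramified square integrable, resp.\ tempered, representations of $Sp(2n,F)$. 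Since every irreducible unramified representation has a well-defined cuspidal support, a $W$-orbit of unramified characters of $A\cong(F^\times)^n$, and is the unique unramified constituent of the corresponding unramified principal series $\mathrm{Ind}_{P_{\min}}^{Sp(2n,F)}(\chi_1\otimes\dots\otimes\chi_n)$, both parts reduce to deciding which such orbits support a square integrable, resp.\ tempered, constituent.

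\textbf{Part (i), the direct inclusion.} One checks that each $\sigma(J)$, $J\in\mathrm{Jord}_{\mathrm{sn}}(n)$, is square integrable by computing $s_{(1,\dots,1,0)}(\sigma(J))$ via the $\mu^*$-formula of \cite{T-Str}: each factor $\langle[\nu^{r_1}\chi,\nu^{r_2}\chi]\rangle$ is a one-dimensional representation of a general linear group, whose Jacquet module along any parabolic is transparent, so the whole Jacquet module of $\sigma(J)$ becomes an explicit sum of characters of $A$, on which a direct estimate shows that every initial partial sum of exponents is strictly negative. The three defining conditions are exactly what makes this work: replacing $J(\chi)$ by $J(\chi)'$ — adjoining a phantom block $-1$ for $\chi=\mathbf 1_{F^\times}$ — centers each segment so that it contributes exponents of total weight zero but with a strict deficit on every initial segment, and it records the reducibility point $\alpha_{\mathbf 1_{F^\times},\mathbf 1_{Sp(0,F)}}=1$ (while $\alpha_{\mathbf{sgn}_{F^\times},\mathbf 1_{Sp(0,F)}}=0$); the relation $\sum m_i=2n+1$ ties the total length of the segments to the rank; the oddness of the $m_i$ prevents an exponent $0$ from entering a partial sum with a multiplicity that would force equality; and the evenness of $\mathrm{card}(J(\mathbf{sgn}_{F^\times}))$ is precisely the condition that the attached Langlands parameter has trivial determinant, i.e.\ lands in $SO(2n+1,\mathbb C)$, equivalently that $\sigma(J)$ actually occurs as a subquotient and is not killed by the sign at the $\mathbf{sgn}_{F^\times}$-reducibility.

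\textbf{Part (i), surjectivity and injectivity.} Take a strongly negative unramified $\pi$, realized as the unramified constituent of some $\mathrm{Ind}_{P_{\min}}^{Sp(2n,F)}(\chi_1\otimes\dots\otimes\chi_n)$. Its cuspidal support is a multiset of unramified characters of $F^\times$, stable under $\chi\mapsto\chi^{-1}$ and containing no unitary character (otherwise an initial sum of exponents vanishes). One then shows that this multiset must decompose, separately over each of the two self-dual unramified characters, as a disjoint union of centered segments whose lengths are odd, whose total length is $2n+1$, and with $\mathrm{card}(J(\mathbf{sgn}_{F^\times}))$ even — i.e.\ into the data of a $J\in\mathrm{Jord}_{\mathrm{sn}}(n)$ — whence $\pi\cong\sigma(J)$ by uniqueness of the unramified constituent. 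Injectivity is the reverse reading: $\sigma(J)$ has the cuspidal support prescribed by $J$, and a multiset of characters on one unramified line has a unique decomposition into centered segments of prescribed parity, so $J$ is recovered. I expect this to be the main obstacle: extracting, from an arbitrary square integrable unramified representation, the centered-segment decomposition of its inversion-stable cuspidal support together with the sign conditions (the evenness of $\mathrm{card}(J(\mathbf{sgn}_{F^\times}))$ and the $-1$ in $J(\mathbf 1_{F^\times})'$). In \cite{Mu-no-un} it is handled by induction on $n$: one locates the outermost segment, peels it off using the Jacquet module $s_{(k)}$ for a suitable maximal parabolic, and applies the inductive hypothesis to the residual strongly negative representation of a smaller symplectic group.

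\textbf{Part (ii).} If $\pi$ is negative and unramified, Casselman's criterion makes it tempered, hence a constituent of the parabolic induction of a square integrable representation twisted by a unitary character of a Levi factor; unravelled in the unramified situation, the exponents of $s_{(1,\dots,1,0)}(\pi)$ split into a part whose every initial sum is \emph{strictly} negative, which assembles by Part (i) into the cuspidal data of a strongly negative $\sigma(J)$, $J\in\mathrm{Jord}_{\mathrm{sn}}(m)$, and a complementary flat part of unitary unramified characters $\psi_1,\dots,\psi_l$ of general linear groups with $n_1+\dots+n_l+m=n$. Thus $\pi$ is the unique unramified constituent of $\psi_1\times\dots\times\psi_l\rtimes\sigma(J)$, and it is a subrepresentation by Frobenius reciprocity applied to the spherical vector. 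Conversely, computing $s_{(1,\dots,1,0)}(\psi_1\times\dots\times\psi_l\rtimes\sigma(J))$ shows that the unitary factors never push an initial sum above $0$ while the $\sigma(J)$-part keeps it $\le 0$, so this representation is negative. Finally $\pi$ determines $J$ and the $\psi_i$ up to permutation and $\psi_i\leftrightarrow\psi_i^{-1}$: the cuspidal support yields the full multiset of exponents, inside it the ``deep'' exponents forced into centered segments of positive length reconstruct $\sigma(J)$, and the remainder is the multiset $\{\psi_1^{\pm1},\dots,\psi_l^{\pm1}\}$.
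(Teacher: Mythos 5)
This theorem is not proved in the paper: it is quoted verbatim from Mui\'c's article \cite{Mu-no-un} (``On the non-unitary unramified dual for classical $p$-adic groups''), and the text surrounding it explicitly says ``G.\ Mui\'c in \cite{Mu-no-un} has proved the following explicit classifications \ldots''. So there is no proof in this paper against which to measure your attempt; the honest editorial move here is to cite, not to reprove.

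Taken on its own terms, your reconstruction has the right broad shape and no fatal logical gap: identifying the combinatorial conditions with Casselman's criterion at the minimal parabolic (and noting that the minimal-parabolic conditions propagate to all standard parabolics by the refinement-of-exponents argument) is the correct bridge; showing $\sigma(J)$ is strongly negative by an explicit $\mu^*$-computation, and then running an induction on $n$ by peeling off an extremal segment of the cuspidal support, is indeed how the surjectivity/injectivity in part (i) goes; and part (ii) is the tempered/Langlands bookkeeping you describe, with irreducibility and subrepresentation-hood obtained from the spherical vector plus Frobenius reciprocity. Two remarks are loose. First, the phantom block $-1$ in $J(\mathbf 1_{F^\times})'$ does not ``center each segment'': the segments $[\nu^{-(a_{2i}-1)/2}\chi,\nu^{(a_{2i-1}-1)/2}\chi]$ are deliberately \emph{off}-center whenever $a_{2i}>a_{2i-1}$; the $-1$ is there to even up $\mathrm{card}\,J(\mathbf 1_{F^\times})'$ and to record that $\nu^{1/2}\mathbf 1_{F^\times}\rtimes\mathbf 1_{Sp(0,F)}$ is irreducible while $\nu\mathbf 1_{F^\times}\rtimes\mathbf 1_{Sp(0,F)}$ reduces, not to symmetrize exponents. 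Second, the reading of the evenness of $\mathrm{card}\,J(\mathbf{sgn}_{F^\times})$ as the $SO(2n+1,\mathbb C)$-valuedness (trivial determinant) of the $L$-parameter is a correct and illuminating gloss, but it is anachronistic: Mui\'c's paper predates Arthur's work, and he obtains this parity condition by direct Jacquet-module analysis, not from the dual-group side. These are packaging issues rather than gaps, and since the theorem is a citation, you should simply quote \cite{Mu-no-un} rather than attempt a self-contained proof here.
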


\begin{remark} Sometimes is more
convenient the following description of $\text{Jord}_{\text{sn}}(n)$.
Since there are exactly two selfdual unramified characters of
$F^\times$, $\mathbf 1_{F^\times}$ and $\mathbf{sgn}_{F^\times}$  (the
non-trivial unramified character of order two), to $J \in
\text{Jord}_{\text{sn}}(n)$ we  attach the ordered pair
$$
(J(\mathbf 1_{F^\times}),J(\mathbf{sgn}_{F^\times})),
$$
where we consider $J(\mathbf 1_{F^\times})$ and $J(\mathbf{sgn}_{F^\times})$
as partitions. This pair determines $J$, and the partitions satisfy the following
properties.

For a partition $p$ of $n$ into sum of $k$ positive integers we shall
write $\vdash$$(p)=n$ and card$(p)=k$. We shall write always members of
partitions in descending order.

In this way, $\text{Jord}_{\text{sn}}(n)$ (and irreducible unramified
strongly negative representations of $Sp(2n,F)$) are parameterized by pairs
$$
(t,s),
$$
where both $t$ and $s$ are partitions into different odd numbers, which satisfy $\vdash$$(t)$ $+$ $\vdash$$(s)=2n+1$ and 
card$(s)\in 2\mathbb Z$.
The corresponding strongly negative representation will be denoted by
$\sigma(t,s)$.

\end{remark}

\medskip

From \cite{MuT} we get the following description of the
unramified unitary dual:

\begin{theorem}
\label{theorem-class}
(i) Let  $\varphi_i$ be  unramified
characters of 
$GL(n_i,F)$ such that $e(\varphi_i)>0$ for $i=1,\dots,m$, and let 
$\sigma_{neg}$ be an irreducible negative unramified representation of 
$Sp{(2(n-n_1-\dots-n_m)},F)$ (we assume $n_1+\dots+n_m\leq n$).
Denote
$$
\pi=
\varphi_1\times\dots\times\varphi_m\rtimes\sigma_{neg}.
$$
For any 
$\varphi$ 
showing up among $\varphi_1^u,\dots,\varphi_m^u$, denote by
$\mathbf e_\pi(\varphi)$ the multiset of exponents
$e(\varphi_i)$ for those $i$ such that $\varphi_i^u\cong\varphi$,
and suppose that 
 the following conditions hold:

\begin{enumerate}

\item
$\mathbf e_\pi(\tilde{\varphi})=\mathbf e_\pi(\varphi)$.

\item 
If either $\varphi\ne\tilde{\varphi}$, or
$\varphi=\tilde\varphi$ and $\nu^\frac12\varphi\rtimes\mathbf1_{Sp(0,F)}$ 
$\underline{reduces}$, then
$\alpha<\frac12$ for all $\alpha\in \mathbf e_\pi(\varphi)$.

\item  
If $\tilde{\varphi}\cong\varphi$ and
$\nu^\frac12\varphi\rtimes\mathbf1_{Sp(0,F)}$ is $\underline{irreducible}$,
then all exponents in $\mathbf e_\pi(\varphi)$ are $<1$. If we write
$\mathbf e_\pi(\varphi)=\{\alpha_1,\dots,\alpha_k,\beta_1,\dots,\beta_l\}$
in a way  that
$$
0<\alpha_1\le\dots\le\alpha_k\le\frac12<\beta_1\leq\dots\leq\beta_l<1,
$$
then first $\beta_1<\dots<\beta_l$ (we can have $k=0$ or $l=0$). Further
\smallskip
\begin{enumerate}
\item[(a)] $\alpha_i+\beta_j\ne1$ for all $i=1,\dots,k$,
$j=1,\dots,l$ and $\alpha_{k-1}\ne\frac12$ if $k>1$.
\item[(b)]  ${\text{card}}\big(\{1\le i\le k:\alpha_i>1-\beta_1\}\big)$ is
even if $l>0$.
\item[(c)]  ${\text{card}}\big(\{1\le i\le
k:1-\beta_j>\alpha_i>1-\beta_{j+1}\}\big)$ is odd for $j=1,\dots,l-1$.
\item[(d)]  $k+l$ is even if $\varphi\rtimes\sigma_{neg}$ 
$\underline{reduces}$. 
\end{enumerate}
\end{enumerate}
Then $\pi$
is an irreducible unitarizable unramified representations of $Sp(2n,F)$.

\smallskip
\noindent
(ii) If we have an irreducible unitarizable unramified
representation $\pi$ of $Sp(2n,F)$, then  there exist 
$\varphi_1,\varphi_2,\dots,\varphi_m,\sigma_{neg}$ as in (i), which satisfy
all the conditions in (i), such that
$$
\pi\cong\varphi_1\times\dots\times\varphi_m\rtimes\sigma_{neg}.
$$
 Further,
$\sigma_{neg}$ and the multiset
$(\varphi_1,\dots,\varphi_k)$ are uniquely determined by
$\pi$ up to equivalence.

\end{theorem}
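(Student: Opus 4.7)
The plan is to follow Tadi\'c's general strategy for classifying unitary duals of classical $p$-adic groups: combine the Langlands classification with a deformation argument through irreducible Hermitian representations. I would split the proof into the existence direction (i) and the uniqueness direction (ii), and handle (ii) first since it sets up the parametrization.

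Given an irreducible unitarizable unramified $\pi$, the Langlands classification writes $\pi$ uniquely as the Langlands quotient of a standard module $\varphi_1 \times \dots \times \varphi_m \rtimes \tau$ with $e(\varphi_i) > 0$ and $\tau$ tempered. Since $\pi$ is unramified, so are all the $\varphi_i$ and $\tau$; and by Casselman's criterion, every unramified tempered representation of $Sp(2k,F)$ is automatically negative in the sense of the definition above, so $\tau = \sigma_{neg}$ fits into Mui\'c's classification. Unitarizability forces $\pi \cong \widetilde{\bar\pi}$; applying the uniqueness in the Langlands classification to this Hermitian symmetry translates directly into $\mathbf{e}_\pi(\tilde\varphi) = \mathbf{e}_\pi(\varphi)$, which is condition~(1). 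The uniqueness of $\sigma_{neg}$ and of the multiset $(\varphi_1,\dots,\varphi_m)$ up to $\varphi_i \leftrightarrow \varphi_i^{-1}$ follows from the same uniqueness together with the fact that on the real axis $\varphi$ and $\tilde\varphi$ produce the same Langlands quotient.

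For (i), the strategy is a deformation argument. Starting from $\pi = \varphi_1 \times \dots \times \varphi_m \rtimes \sigma_{neg}$, introduce a parameter $t \in [0,1]$ and consider the family $\pi_t$ obtained by scaling each exponent $e(\varphi_i)$ by $t$. At $t=0$, $\pi_0$ is unitary induction from the unitary axis together with $\sigma_{neg}$, hence unitarizable. At $t=1$, $\pi_1 = \pi$. By the principle from \cite{T-ext} (construction (b) in the third section there), as long as the family consists of irreducible Hermitian representations, unitarizability propagates along the path. Irreducibility of $\pi_t$ for all $t \in [0,1]$ reduces to checking the reducibility structure of induced representations of the form $\psi \rtimes \sigma_{neg}$ and $\psi_1 \times \psi_2$, which is controlled by conditions (2) and (3)(a). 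The parity conditions (3)(b)-(d) are precisely what ensures that at the crossings of reducibility points along the path, the unramified subquotient remains the one whose character is unchanged.

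The main obstacle will be the intricate combinatorics of conditions (3)(a)-(d): these arise from analyzing poles of long intertwining operators at reducibility crossings, and the parity conditions encode the sign changes in the Hermitian form. I expect one must proceed by induction on the rank $n$, using the Jacquet module structure of negative representations (parametrized via $\text{Jord}_{\text{sn}}(n)$) to reduce the analysis of each $\psi \rtimes \sigma_{neg}$ to smaller-rank cases, combined with explicit knowledge of rank-one reducibilities $\langle[\nu^{r_1}\chi,\nu^{r_2}\chi]\rangle \rtimes \sigma(J)$. The combinatorial conditions then emerge by tracking, at each crossing, which Jantzen block merges with which, and verifying that the surviving unitarizable component continues to be unramified. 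A delicate point is case (3)(d), where reducibility of $\varphi \rtimes \sigma_{neg}$ changes the parity count by one, forcing a global parity adjustment on $k+\ell$.
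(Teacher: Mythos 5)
This theorem is not proved in the paper at all: the text introduces it with ``From \cite{MuT} we get the following description of the unramified unitary dual'', and then simply records the statement. There is therefore no proof in the paper against which your argument can be checked; the full proof occupies a substantial portion of \cite{MuT} (and relies in turn on \cite{Mu-no-un} for the reducibility analysis of $\varphi\rtimes\sigma_{neg}$, and on \cite{LMuT} for the generic case).

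That said, your outline is directionally consistent with the strategy actually carried out in \cite{MuT}: identify $\sigma_{neg}$ via the Langlands/Casselman data, note that unitarizability forces Hermitian symmetry (giving condition (1)), and then establish the positive direction by deforming exponents and tracking signature changes through reducibility points. But be aware that what you describe is a plan, not a proof, and the two places you wave at are precisely where the real work sits. First, the naive deformation $\pi_t$ (scaling all $e(\varphi_i)$ by $t$) does \emph{not} stay in a single irreducible Hermitian family when some exponents lie in $(\tfrac12,1)$ --- the family necessarily crosses reducibility walls, and one has to show that the unramified constituent inherits a definite form across each crossing. This is exactly where the parity constraints (3)(b)--(d) come from, and establishing them requires a careful inductive analysis of Jacquet modules and Jantzen filtrations, which your sketch only gestures at. Second, for part (ii) it is not enough to invoke Langlands uniqueness: one must also prove that the long list of conditions (2)--(3) is \emph{necessary} for unitarity, which in \cite{MuT} is done by exhibiting explicit non-unitarity certificates (often via Jacquet-module restriction to smaller groups or by the asymptotic/bounded-matrix-coefficient method) whenever one of the conditions fails. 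Neither of these pieces is present in your proposal, so while the framework is right, the essential mathematical content of the theorem is deferred rather than established.
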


\bigskip

To have an explicit classification, one needs to understand when 
$\nu^\frac12\varphi\rtimes \mathbf 1_{Sp(0,F)}$ and
$\varphi\rtimes\sigma_{neg}$ from  above theorem reduce. Since in the above
theorem
$\varphi$ is selfdual, we can write
$\varphi=\langle[\nu^{-(p-1)/2}\chi,\nu^{(p-1)/2}\chi]\rangle$ where $p\in
\mathbb Z_{>0}$ and
$\chi$ is a selfdual unramified character of $F^\times$.
 Now the reducibility is described by the following results of G. Mui\'c in
\cite{Mu-no-un}:
\bigskip

\begin{proposition}
\label{prop-red}
 Let  
$$
\varphi=\langle[\nu^{-(p-1)/2}\chi,\nu^{(p-1)/2}\chi]\rangle,
$$
 where $p\in
\mathbb Z_{>0}
$
 and
$\chi$ is a selfdual unramified character of $F^\times$.
Suppose that $\sigma_{neg}$ is an (unramified) irreducible
subrepresentation of some
$$
\psi_1\times\dots\times\psi_s\rtimes\sigma({J}),
$$
where $\psi_i$ are unitary unramified  characters of general linear
groups and $J\in\text{Jord}_{\text{sn}}(q)$, $q\geq0$.
Then

\medskip
\noindent
 (1) 
$\nu^\frac12\varphi\rtimes\mathbf1_{Sp(0,F)}$ reduces if and only if 
$p$ is even;

\smallskip
\noindent
(2)  $\varphi\rtimes\sigma_{neg}$
reduces if and only if $p$ is odd, $(\chi,p)\notin J$
and $\varphi\notin\{\psi_1,\dots,\psi_s\}$.
\end{proposition}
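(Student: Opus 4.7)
My plan is to treat the two parts in parallel, since both reduce to understanding parabolic induction of the segment character $\varphi = \chi\,\mathbf{1}_{GL(p,F)}$ against an unramified target. The key tool will be Proposition \ref{CJM}, and the key structural input is the explicit construction of $\sigma(J)$ from segment characters together with the embedding $\sigma_{neg} \hookrightarrow \psi_1 \times \dots \times \psi_s \rtimes \sigma(J)$, which lets me push every induction down to the principal series.

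For part (1), I will first realize $\nu^{1/2}\varphi \rtimes \mathbf{1}_{Sp(0,F)}$ as the Langlands subrepresentation of the fully induced principal series $\nu^{(2-p)/2}\chi \times \nu^{(4-p)/2}\chi \times \dots \times \nu^{p/2}\chi \rtimes \mathbf{1}_{Sp(0,F)}$. For the two self-dual unramified characters $\chi \in \{\mathbf{1}_{F^\times}, \mathbf{sgn}_{F^\times}\}$ the cuspidal reducibility point equals $\alpha_{\chi,\mathbf{1}_{Sp(0,F)}} = 1/2$, so the only rank-one wall in this principal series lies at the rightmost exponent $\nu^{p/2}\chi$, and it is hit precisely when $p/2 \in 1/2 + \mathbb{Z}_{\ge 0}$, i.e.\ when $p$ is even. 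A short computation with $s_{(1,\dots,1,0)}$ then confirms that the wall-crossing propagates to genuine reducibility of the Langlands subrepresentation $\nu^{1/2}\varphi \rtimes \mathbf{1}_{Sp(0,F)}$ itself.

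For part (2), I will run a three-way case analysis keyed to the three stated obstructions, with Proposition \ref{CJM} as the main irreducibility tool. If $p$ is even, then $\varphi$ has half-integer cuspidal support exponents while $\alpha_{\chi, \sigma(J)}$ must be integral (each element of the Jordan set for the cuspidal backbone of $\sigma(J)$ has odd first coordinate by M\oe glin), so taking $X = \{\nu^j \chi : -(p-1)/2 \le j \le (p-1)/2\}$ verifies the hypotheses of Proposition \ref{CJM}(i) and gives irreducibility. If $p$ is odd but $(\chi, p) \in J$, then $\varphi$ absorbs into one of the paired segment factors $\langle[\nu^{-(a_{2i}-1)/2}\chi, \nu^{(a_{2i-1}-1)/2}\chi]\rangle$ in the explicit formula for $\sigma(J)$, and irreducibility follows by tracking this substitution through induction in stages. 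If $p$ is odd but $\varphi \in \{\psi_1,\dots,\psi_s\}$, then $\varphi \rtimes \sigma_{neg}$ embeds into $\varphi \times \psi_1 \times \dots \times \psi_s \rtimes \sigma(J)$ with $\varphi$ appearing as a doubled unitary factor, and irreducibility follows by applying Proposition \ref{CJM}(i) to the remaining noncolliding part.

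The hard part will be the converse direction in (2): when $p$ is odd, $(\chi, p) \notin J$, and $\varphi \notin \{\psi_1,\dots,\psi_s\}$, I must actually exhibit a reducibility. Here I would first apply Proposition \ref{CJM}(ii) to separate $\varphi$ from $\sigma_{neg}$ and reduce to the model case $\sigma_{neg} = \sigma(J)$. Then the problem becomes showing that $\varphi \rtimes \sigma(J)$ reduces whenever the odd integer $p$ is absent from $J(\chi)$. The cleanest route is to compute $s_{GL}(\varphi \rtimes \sigma(J))$ via Tadi\'c's structure formula and exhibit two non-isomorphic irreducible constituents of compatible cuspidal support, forcing length $\ge 2$. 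The parity $p$ odd is essential here because only then does $\varphi$ produce a new candidate Jordan block, and carrying out the multiplicity count cleanly is the technical crux.
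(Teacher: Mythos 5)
The paper itself offers \emph{no} proof of this proposition. The sentence immediately preceding the statement reads ``Now the reducibility is described by the following results of G.~Mui\'c in \cite{Mu-no-un}:'' --- so this is a quoted result, not one the author reproves. Your proposal is therefore a genuinely independent attempt, and should be judged on its own.

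Unfortunately the argument for part (1) contains a concrete error right at the key step. You claim that for both unramified self-dual characters $\chi$ one has $\alpha_{\chi,\mathbf 1_{Sp(0,F)}}=\tfrac12$. That is the odd-orthogonal value, not the symplectic one: for $Sp(2,F)=SL(2,F)$ the reducibility points are $\alpha_{\mathbf 1_{F^\times},\mathbf 1_{Sp(0,F)}}=1$ and $\alpha_{\mathbf{sgn}_{F^\times},\mathbf 1_{Sp(0,F)}}=0$, both integers. Worse, the inference you draw from your own (incorrect) value is internally inconsistent: $p/2\in\tfrac12+\mathbb Z_{\ge 0}$ holds precisely when $p$ is \emph{odd}, not even, so you would obtain the wrong parity even on your own premises. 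The correct parity criterion does come from the integrality of the true reducibility points against the exponent set $\{\tfrac{2-p}2,\dots,\tfrac p2\}$, but you cannot recover it with $\alpha=\tfrac12$. A second, separate gap in part (1): a rank-one wall in the full principal series is necessary but not sufficient for the specific Langlands subrepresentation $\nu^{1/2}\varphi\rtimes\mathbf 1_{Sp(0,F)}$ to reduce; the phrase ``a short computation with $s_{(1,\dots,1,0)}$ then confirms that the wall-crossing propagates'' papers over the actual work. One genuinely needs a segment-reducibility criterion for classical groups (Mui\'c's own results, or Goldberg's $R$-group computations for the Siegel case), not a walls-in-the-principal-series heuristic.

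For part (2) the skeleton you lay out (three irreducibility cases, then the converse by exhibiting length $\ge 2$ from Jacquet modules) is reasonable in outline and roughly parallels how one would actually argue, but each of the three irreducibility cases is stated only as ``one checks the hypotheses of Proposition~\ref{CJM}'' or ``$\varphi$ absorbs into a paired segment factor'' without the verification being done. The case $p$ even is the most plausible to close along the lines you indicate, since integral versus half-integral cuspidal exponents really do separate cleanly; the cases $(\chi,p)\in J$ and $\varphi\in\{\psi_1,\dots,\psi_s\}$ require genuine manipulation of $\sigma(J)$'s explicit embedding, and ``tracking this substitution through induction in stages'' is not yet an argument. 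As written, the proposal is a plan rather than a proof, and the plan for part (1) rests on a numerical mistake that must be corrected before anything downstream can be trusted.
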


\section{Bounding the trivial representation from the rest of the unitary dual of $Sp(2n,F)$}

We continue to assume char$(F)\ne2$, as we did in the previous section. 
Denote
$$
 \mathbb R^q_\downarrow=\{
x=(x_1,x_2,\dots,x_{q-1},x_q) \in \mathbb R^q; x_1\geq x_2\geq \dots,x_{q-1}\geq x_q\}.
$$
For $x\in  \mathbb R^q$ we denote by 
$$
x_\downarrow
$$
 a unique $y \in \mathbb R^q_\downarrow$ such that the sequences $x_1,x_2,\dots,x_{q-1},x_q$ and $y_1,y_2,\dots,y_{q-1},y_q$ coincide up to a permutation. For $x\in \R^q$ we denote
$$
|x|=(|x_1|,|x_2|,\dots,|x_{q-1}|,|x_q|).
$$

We have defined two orderings on $ \mathbb R^q$: 
$
x\leq_wy $ if $ \sum_{i=1}^j x_i\leq \sum_{i=1}^j y_i$ for all
$j\in\{1,\dots,q\}
$,
and
$
x\leq_s y $ if $ x_j\leq  y_j$ for all
$j\in\{1,\dots,q\}.
$
Then obviously hold the following simple properties
$$
x\leq_w y
\qquad \& \qquad
x'\leq_w y' \qquad \implies \qquad x+x'\leq_w y+y',
$$
$$
x\leq_s y
\qquad \& \qquad
x'\leq_s y' \qquad \implies \qquad x+x'\leq_s y+y',
$$
$$
x\leq_s y\implies x\leq_w y,
$$
$$
x\leq_w |x|, \qquad
x\leq_s |x|,
$$
$$
x\leq_w x_\downarrow.
$$
The last inequality holds since the sum of the first $j$ coordinates of $x_\downarrow$ is greater then  or equal to the sum of any $j$ coordinates of $x_\downarrow$ (or $x$). Note that $
x\leq_s x_\downarrow.
$ does not hold in general.

For $x\in \mathbb R^q$ and $y\in \mathbb R^p$ we denote 
$$
x_-^-y=(x_1,x_2,\dots,x_{q-1},x_q,y_1,y_2,\dots,y_{p-1},y_p)\in \mathbb R^{q+p}.
$$

\begin{lemma}
\label{lemma-ineq}
  (i) Let $x\in \mathbb R^q_\downarrow$ and $y\in \mathbb R^q$. Then $x\geq_s y$ implies $x\geq_s y_\downarrow$.

\noindent(ii) For $x,x'\in \mathbb R^q_\downarrow$ and $y,y'\in \mathbb R^p_\downarrow$ holds
$$
x \ \geq_s \ x',\ y\ \geq_s \ y'\implies (x_-^-y)_\downarrow\  \geq_s \ ({x'}_-^-y')_\downarrow.
$$
\end{lemma}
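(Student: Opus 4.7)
The plan is to exploit the elementary characterization of order statistics: for $v \in \mathbb R^n$ and $k \in \{1,\dots,n\}$, the value $v_{\downarrow,k}$ equals the largest $t \in \mathbb R$ such that $|\{i : v_i \geq t\}| \geq k$. Equivalently, $v_{\downarrow,k} \geq t$ if and only if at least $k$ coordinates of $v$ satisfy $v_i \geq t$. Both parts then follow from a uniform counting argument that compares how many coordinates of two dominated vectors exceed a fixed threshold.

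For (i), I would fix $i \in \{1,\dots,q\}$ and set $t := y_{\downarrow,i}$. By the characterization above, at least $i$ indices $j$ satisfy $y_j \geq t$. The componentwise dominance $x \geq_s y$ gives $x_j \geq y_j \geq t$ for these same indices, so at least $i$ coordinates of $x$ are $\geq t$. Since $x \in \mathbb R^q_\downarrow$ is already arranged in decreasing order, its first $i$ entries are its $i$ largest, which forces $x_i \geq t = y_{\downarrow,i}$, as required.

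For (ii), I would set $u := x_-^-y$ and $u' := (x')_-^-y'$ in $\mathbb R^{q+p}$. From $x \geq_s x'$ and $y \geq_s y'$ we immediately read off $u \geq_s u'$ componentwise. Applying the same reasoning, for each $k \in \{1,\dots,q+p\}$ put $t := u'_{\downarrow,k}$; then at least $k$ indices $i$ satisfy $u'_i \geq t$, and hence also $u_i \geq t$, which yields $u_{\downarrow,k} \geq t = u'_{\downarrow,k}$. This is exactly $(x_-^-y)_\downarrow \geq_s (x'_-^-y')_\downarrow$. I do not expect any substantive obstacle here: both parts instantiate a single general principle, namely that componentwise dominance is preserved by the decreasing rearrangement, of which (i) is simply the special case where the dominating vector happens to be already sorted (so that $x_\downarrow = x$).
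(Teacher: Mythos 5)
Your proof is correct, but it takes a genuinely different route from the paper. For part (i), the paper runs a bubble-sort style argument: whenever $y_i<y_j$ with $i<j$, it swaps them to form a new $y'$ and checks directly (using $x_i\geq x_j$) that $x\geq_s y'$ still holds; iterating until $y$ is fully sorted gives $x\geq_s y_\downarrow$. For part (ii), the paper reduces to (i): it picks a permutation $\sigma$ that sorts $z:=x_-^-y$, applies the same $\sigma$ to $z':=x'_-^-y'$ to get $z''$, notes $z_\downarrow\geq_s z''$, invokes (i) to conclude $z_\downarrow\geq_s z''_\downarrow=z'_\downarrow$. You instead prove, in one stroke, the more general principle that componentwise dominance is preserved by decreasing rearrangement, using the order-statistics characterization $v_{\downarrow,k}\geq t \iff \#\{i: v_i\geq t\}\geq k$; part (i) is then a special case (the dominating vector is already sorted) and part (ii) is a direct instance once one observes $x_-^-y\geq_s x'_-^-y'$. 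Your counting argument is cleaner and more unified, avoids the iteration/termination bookkeeping of the swap argument, and makes transparent that the hypotheses that the individual pieces lie in $\mathbb R^q_\downarrow$ and $\mathbb R^p_\downarrow$ are not actually needed in (ii). The paper's proof, by contrast, is self-contained at the level of elementary transpositions and leans on (i) as a lemma for (ii) rather than proving a strictly stronger statement.
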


\begin{proof} (i) The assumption is that $x_1\geq x_2\geq \dots,x_{q-1}\geq x_q$ and  $x_i\geq y_i$, $i=1,\dots,q$. 
Suppose that $y_i<y_j$ for some $i<j$. Denote by $y'\in \mathbb R^q$ the element which one gets from $y$ switching positions of $y_i$ and $y_j$. Now
$
x_i\geq x_j\geq y_j,
$
and 
$
x_j\geq  y_j>y_i.
$
This implies $x\geq_s y'$. 
Further, if there are some $i<j$ such that $y_i'<y_j'$, one defines $y''$ in analogous way as was defined $y'$ from $y$, and gets in the same way that $x\geq_s y''$. Repeating this procedure as long as it is possible (one can do it at most finitely many times),  one will get $x\leq_s y^{(n)}.$  Since $y_\downarrow=y^{(n)}$, the proof of is  complete.

Denote $z=x_-^-y, z'={x'}_-^-y'$. Obviously, $z\geq_s z'$. Let $\sigma$ be a permutation of $\{1,\dots,q+p\}$ such that $z_\downarrow=(z_{\sigma(1)},\dots,z_{\sigma(q+p)})$. Denote $z''=(z'_{\sigma(1)},\dots,z'_{\sigma(q+p)})$.
Observe that $z\geq_s z'$ implies $z_\downarrow\geq_s z''$. Now  (i) obviously implies $z_\downarrow\geq_s z''_\downarrow$.
Since $z'_\downarrow= z''_\downarrow$, we get $z_\downarrow\geq_s z'_\downarrow$. This completes the proof of (ii).
\end{proof}

Let  $\pi$ be an unramified irreducible representation of $GL(q,F)$. Then
$\pi$ is a subquotient of some 
$$
\chi_1\times\dots\times\chi_q,
$$
where $\chi_i$ are unramified characters of $F$.
The sequence
$
e(\chi_1),\dots,e(\chi_n)
$
is determined 
  by $\pi$ up to  a permutation.
We denote by
$$
\mathbf e(\pi)=(e(\chi_1),\dots,e(\chi_q))_\downarrow
$$

Let $\tau_1,\dots,\tau_l$ be  irreducible unramified representations of  general
linear groups, and let $\pi$ be such a representation of a classical group. Then
$\tau_1\times\dots\times\tau_l\rtimes \sigma$ contains a unique unramified irreducible subquotient.
Denote it by $\pi'$. Then we define
$||\tau_1\times\dots\times\tau_l\rtimes \sigma||$ to be
$||\pi'||$.
Observe that
$$
||\tau_1\times\dots\times\tau_l\rtimes \sigma||=
(|\mathbf e(\tau_1)|^-_-\dots^-_-|\mathbf e(\tau_l)|^-_-|| \sigma||)_\downarrow.
$$

For $u,v\in \mathbb R$ such that $v-u$ is a non-negative integer, we denote
$$
[u,v]_\downarrow=(v,v-1,,\dots,u+1,u)\in \mathbb R^{v-u+1}.
$$

\begin{proposition}  Let $\pi$ be a strongly negative unramified  representation of $Sp(2q,F)$. Then
$$
||\pi||\leq_s [1,q]_\downarrow,
$$
where the equality holds if and only if $\pi$ is the trivial representation of $Sp(2q,F)$. If $\pi$ is a non-trivial strongly negative unramified  representation, then 
$$
||\pi||\leq_s  [0,q-1]_\downarrow.
$$
\end{proposition}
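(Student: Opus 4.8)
The plan is to use the explicit classification of strongly negative unramified representations recalled above and turn the problem into a purely combinatorial inequality about partitions. By that classification, $\pi = \sigma(J)$ for a unique $J \in \mathrm{Jord}_{\mathrm{sn}}(q)$; writing $t = J(\mathbf{1}_{F^\times})$ and $s = J(\mathbf{sgn}_{F^\times})$, the pair $(t,s)$ consists of partitions into distinct odd parts with $\vdash(t) + \vdash(s) = 2q+1$ and $\mathrm{card}(s)$ even. Since a sum of $\mathrm{card}(s)$ odd numbers equals $\vdash(s)$, which has the parity of $\mathrm{card}(s)$, and $\vdash(t) = 2q+1-\vdash(s)$, the integer $\mathrm{card}(t)$ is odd; in particular the multiset $t\cup s$ of parts is nonempty and has odd cardinality. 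The first step is to make $||\pi||$ explicit: from the formula for $\sigma(J)$ together with the identity for $||\tau_1\times\dots\times\tau_l\rtimes\sigma||$ stated just above, $||\pi||$ is the decreasing rearrangement of the multiset of absolute values of the exponents occurring in the $GL$-part of the cuspidal support of $\sigma(J)$. Each factor $\langle[\nu^{-(a''-1)/2}\chi,\nu^{(a'-1)/2}\chi]\rangle$ of $\sigma(J)$, attached to a pair of members $a' < a''$ of $J(\mathbf{1}_{F^\times})' = t\cup\{-1\}$ (for $\chi=\mathbf{1}_{F^\times}$) or of $J(\mathbf{sgn}_{F^\times})' = s$ (for $\chi = \mathbf{sgn}_{F^\times}$), contributes the exponents $-\tfrac{a''-1}{2},\,-\tfrac{a''-1}{2}+1,\,\dots,\,\tfrac{a'-1}{2}$, and each member of $J(\mathbf{1}_{F^\times})'$ and of $J(\mathbf{sgn}_{F^\times})'$ lies in exactly one such pair.

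Next I would record a counting identity. Fix an integer $k\ge 1$. Among the exponents contributed by the factor attached to $(a',a'')$, those that are $\ge k$ number $\max(0,\tfrac{a'-(2k-1)}{2})$ and those that are $\le -k$ number $\max(0,\tfrac{a''-(2k-1)}{2})$ (note $a'-(2k-1)$ and $a''-(2k-1)$ are even, since $a',a''$ are odd or equal to $-1$). Summing over all factors — the extra member $-1\in J(\mathbf{1}_{F^\times})'$ contributing $0$ — and using $\vdash(t)+\vdash(s)=2q+1$ together with $a-\max(0,a-N)=\min(a,N)$, one gets
\begin{equation*}
\#\{j:||\pi||_j\ge k\}=\frac{1}{2}\Big(\sum_{a\in t}\max(0,a-(2k-1))+\sum_{a\in s}\max(0,a-(2k-1))\Big)=\frac{(2q+1)-T(k)}{2},
\end{equation*}
where $T(k):=\sum_{a\in t}\min(a,2k-1)+\sum_{a\in s}\min(a,2k-1)$.

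The heart of the matter is then the lower bound $T(k)\ge 2k-1$ for $1\le k\le q+1$, with the strict improvement $T(k)\ge 2k+1$ for $1\le k\le q$ as soon as $\pi\ne\mathbf{1}_{Sp(2q,F)}$. This is where the parity remark is used: $t\cup s$ is a nonempty multiset of odd positive integers summing to $2q+1$ whose cardinality is odd, so it is never of cardinality $2$; hence either $t\cup s=\{2q+1\}$ — which happens precisely when $(t,s)=(\{2q+1\},\emptyset)$, i.e. $\pi=\sigma(\{2q+1\},\emptyset)=\mathbf{1}_{Sp(2q,F)}$ (a standard fact, consistent with $||\mathbf{1}_{Sp(2q,F)}||=(q,q-1,\dots,1)$ from the introduction) — or $t\cup s$ has at least three members. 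Writing $N=2k-1\le 2q+1$: if some part is $\ge N$ it contributes $\min=N$ and every other part contributes at least $1$, giving $T(k)\ge N+(\mathrm{card}(t\cup s)-1)$; and if every part is $<N$ then $T(k)=\vdash(t)+\vdash(s)=2q+1$, which is $\ge N+2$ whenever $N\le 2q-1$. In the non-trivial case $\mathrm{card}(t\cup s)\ge 3$, upgrading the first case to $T(k)\ge N+2$. I expect this short combinatorial argument to be the only real content of the proof; everything else is bookkeeping.

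Finally I would read off the two inequalities. For $1\le k\le q+1$, $T(k)\ge 2k-1$ gives $\#\{j:||\pi||_j\ge k\}\le \tfrac{(2q+1)-(2k-1)}{2}=q+1-k$, i.e. the $j$-th (largest) coordinate $||\pi||_j$ is $\le q+1-j$ for $j=1,\dots,q$; this is $||\pi||\leq_s[1,q]_\downarrow$. If equality held throughout, then $||\pi||_1=q$, so $\#\{j:||\pi||_j\ge q\}\ge 1$, forcing $T(q)\le 2q-1$ and hence $t\cup s=\{2q+1\}$, i.e. $\pi=\mathbf{1}_{Sp(2q,F)}$; conversely $||\mathbf{1}_{Sp(2q,F)}||=(q,q-1,\dots,1)=[1,q]_\downarrow$. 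If instead $\pi$ is a non-trivial strongly negative unramified representation, then for $1\le k\le q$ we get $\#\{j:||\pi||_j\ge k\}\le q-k$, i.e. $||\pi||_j\le q-j$ for $j=1,\dots,q$, which is $||\pi||\leq_s[0,q-1]_\downarrow$. (In both cases the inequalities for coordinates of index beyond the stated range are automatic, the coordinates of $||\pi||$ being nonnegative; in particular $||\pi||_1\le q$ already because no part of $t\cup s$ exceeds $2q+1$.)
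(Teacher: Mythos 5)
Your proof is correct, but it takes a genuinely different route from the paper's. The paper proves this proposition by induction on the total number of parts $m$ in the pair of partitions $(t,s)$: the base case $m=1$ is the trivial representation, and for $m\geq 3$ it strips off the two largest parts of one partition, writes $\sigma(J)$ as the unramified subquotient of a segment times a smaller strongly negative representation, and applies the inductive hypothesis together with Lemma~\ref{lemma-ineq} on decreasing rearrangements. You instead give a closed-form, non-inductive argument: you read off from the formula for $\sigma(J)$ the exact multiset of exponents, establish the counting identity $\#\{j:||\pi||_j\geq k\}=\tfrac12\bigl((2q{+}1)-T(k)\bigr)$ with $T(k)=\sum_{a\in t\cup s}\min(a,2k-1)$, and reduce both inequalities to the elementary bounds $T(k)\geq 2k-1$ (and $\geq 2k+1$ when $\mathrm{card}(t\cup s)\geq 3$). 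The parity observation that $\mathrm{card}(t\cup s)$ is odd (so is $1$ or $\geq 3$) is exactly what replaces the paper's induction step and the equality analysis. Your approach is more explicit and arguably gives a sharper picture of where each coordinate of $||\pi||$ comes from; the paper's inductive scheme has the advantage of being the same template it then reuses for the (harder) negative and non-negative cases in the subsequent two propositions, which keeps Section~5 uniform. Both arguments are sound and self-contained given the recalled classification.
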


\begin{proof} By \cite{Mu-no-un}, $\pi=Jord(a,b)$, for some partitions $a$ and $b$  into different odd positive integers such that $\vdash$$(a)$ + $\vdash$$(b)=2q+1$ and that the number of integers in $b$ is even. We shall prove the proposition by the induction with respect to the sum of number of integers entering partitions $a$ and $b$ (obviously, this number is always odd). We shall denote this number by $m$.

Consider first the case when that number is one. Then $(a,b)=((2q+1),\emptyset)$ and $\pi$ is the trivial representation $\mathbf1_{Sp(2q,F)}$. Obviously, $||\mathbf1_{Sp(2q,F)}||= [1,q]_\downarrow$.
 
 We go now to the inductive step. Suppose now  $m\geq 3$. Then at least one of partitions $a$ or $b$ has at least 2 integers. We shall consider the case that $a$ has at lest two integers (the other case goes completely analogously). 
 
 Write $a=(a_1,a_2)^-_-a'$ in a way that $a_1>a_2$. Then $a_1+a_2+\vdash$$(a')$ + $\vdash$$(b)=2q+1$. Define an integer $q'$ by the requirement $\vdash$$(a')$ + $\vdash$$(b)=2q'+1$.
 Now $\pi$ is a non-trivial representation, and we have
 $$
 \textstyle
 ||\pi||=||Jord(a,b)||= ||\langle[\nu^{-\frac{(a_2-1)}2}\mathbf1_{F^\times},\nu{\frac{(a_3-1)}2}\mathbf1_{F^\times}]\rangle)\rtimes  Jord(a',b))||
 $$
$$
\textstyle
=({[1,\frac{(a_2-1)}2]_\downarrow}^-_-
{[0,\frac{(a_1-1)}2]_\downarrow }^-_-
 ||Jord(a',b)|| )_\downarrow.
$$
Now using the above lemma and the inductive assumption,  we get
$$
\textstyle
||\pi|| 
\leq
({[1,\frac{(a_2-1)}2]_\downarrow}^-_-
{[0,\frac{(a_1-1)}2]_\downarrow }^-_-
 [1,q']_\downarrow )_\downarrow.
$$
$$
\textstyle
=({[1,\frac{(a_2-1)}2]_\downarrow}^-_-
 {[1,q']_\downarrow}
^-_-
{[0,\frac{(a_1-1)}2]_\downarrow }
 )_\downarrow.
$$
Again using the above lemma we get
$$
||\pi|| \leq_s [0,q-1]_\downarrow.
$$
This completes the proof of the proposition.
\end{proof}

\begin{proposition} Let $\pi$ be a  negative unramified  representation of $Sp(2q,F)$ which is not strongly negative. Then at least one of the following two inequalities hold:
$$
||\pi||\leq_s [0,q-1]_\downarrow
$$
or
$$
\textstyle
||\pi||\leq_s  {[1,q-2]_\downarrow}^-_-(\frac12,\frac12).
$$
\end{proposition}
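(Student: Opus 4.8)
The plan is to reduce the statement to the preceding Proposition on strongly negative representations. First I would invoke Mui\'c's classification of the negative unramified representations (part (ii) of the Theorem recalled above) to write $\pi$ as the unique unramified subrepresentation of
$$
\psi_1\times\dots\times\psi_l\rtimes\sigma(J),
$$
where each $\psi_i$ is an unramified unitary character of $GL(n_i,F)$ with $n_i\geq1$, $J\in\text{Jord}_{\text{sn}}(m)$, and $n_1+\dots+n_l+m=q$. The hypothesis that $\pi$ is negative but \emph{not} strongly negative means exactly that $l\geq1$ in this normal form (if $l=0$, then $\pi=\sigma(J)$, which is strongly negative); I would record the resulting inequality $q-m=n_1+\dots+n_l\geq1$, as it is the only quantitative input needed beyond the preceding Proposition.

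Next I would compute $||\pi||$. Since every $\psi_i$ is unitary we have $\mathbf e(\psi_i)=(0,\dots,0)\in\R^{n_i}$, so the formula $||\tau_1\times\dots\times\tau_l\rtimes\sigma||=(|\mathbf e(\tau_1)|^-_-\dots^-_-|\mathbf e(\tau_l)|^-_-||\sigma||)_\downarrow$ collapses to
$$
||\pi||=\big((0,\dots,0)^-_-||\sigma(J)||\big)_\downarrow,
$$
where the displayed string of zeros has length $q-m$: the norm of $\pi$ is just the norm of its strongly negative part, padded by $q-m\geq1$ zeros and re-sorted into $\R^q_\downarrow$.

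Then I would distinguish two cases according to the preceding Proposition applied to the strongly negative representation $\sigma(J)$ of $Sp(2m,F)$. If $\sigma(J)$ is not the trivial representation (this covers $m=0$), that Proposition gives $||\sigma(J)||\leq_s[0,m-1]_\downarrow$, and then Lemma~\ref{lemma-ineq}(ii), applied with the zero vector on both sides, yields $||\pi||\leq_s\big({[0,m-1]_\downarrow}^-_-(0,\dots,0)\big)_\downarrow$; the $i$-th coordinate of the right-hand side is $\max(m-i,0)\leq q-i$, so $||\pi||\leq_s[0,q-1]_\downarrow$. If instead $\sigma(J)=\mathbf 1_{Sp(2m,F)}$ with $m\geq1$, then $||\sigma(J)||=[1,m]_\downarrow$, hence $||\pi||=(m,m-1,\dots,1,0,\dots,0)$, whose $i$-th coordinate is $\max(m+1-i,0)$; since $m\leq q-1$ we get $m+1-i\leq q-i$, so again $||\pi||\leq_s[0,q-1]_\downarrow$. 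In both cases the first of the two asserted inequalities holds, hence the Proposition.

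The argument is almost entirely bookkeeping with the operations $(\cdot)_\downarrow$, ${}^-_-$ and $\leq_s$; the only point requiring genuine care is that padding a sorted vector and re-sorting respects $\leq_s$, and this is precisely what Lemma~\ref{lemma-ineq} provides, so I do not expect a real obstacle. The one load-bearing observation worth isolating is that ``not strongly negative'' forces at least one unitary character to be stripped off, i.e.\ $q-m\geq1$; without it the trivial representation (where $m=q$) would violate the first inequality, so it matters that this case is excluded by hypothesis.
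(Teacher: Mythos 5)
The proposal contains a fatal error at the step computing $||\pi||$: the claim that $\mathbf e(\psi_i)=(0,\dots,0)\in\R^{n_i}$ because $\psi_i$ is unitary is false whenever $n_i>1$. What is true is that $e(\psi_i)=0$, i.e.\ the single real number attached to the character $\psi_i$ of $GL(n_i,F)$ vanishes. But $\mathbf e(\psi_i)$ is, by the paper's definition, the sorted sequence of exponents of the \emph{cuspidal support} of $\psi_i$ as a representation of $GL(n_i,F)$. A unitary unramified character $\psi_i=\chi_i\circ\det$ of $GL(n_i,F)$ is the unramified subquotient of $\nu^{(n_i-1)/2}\chi_i\times\nu^{(n_i-3)/2}\chi_i\times\dots\times\nu^{-(n_i-1)/2}\chi_i$, so
$$
\mathbf e(\psi_i)=\Bigl(\tfrac{n_i-1}{2},\tfrac{n_i-3}{2},\dots,-\tfrac{n_i-1}{2}\Bigr),
$$
and $|\mathbf e(\psi_i)|_\downarrow$ is $(\tfrac{n_i-1}{2},\tfrac{n_i-1}{2},\dots,1,1,0)$ for $n_i$ odd, or $(\tfrac{n_i-1}{2},\tfrac{n_i-1}{2},\dots,\tfrac32,\tfrac32,\tfrac12,\tfrac12)$ for $n_i$ even. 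Your formula $||\pi||=((0,\dots,0)^-_-||\sigma(J)||)_\downarrow$ is therefore wrong, and the proposition does not reduce to padding $||\sigma(J)||$ with zeros.

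This is not a cosmetic issue: the contributions you discarded are exactly what the rest of the proof is about, and their presence is why the statement has \emph{two} alternative bounds. The paper's proof peels off a single $\psi_i$ at a time (as $\langle[\nu^{-(c-1)/2}\chi,\nu^{(c-1)/2}\chi]\rangle$ for $c=n_i$) and argues by induction on rank, treating separately the parity of $c$: for $c$ odd the peeled-off factor contributes $[1,\tfrac{c-1}{2}]_\downarrow{}^-_-[0,\tfrac{c-1}{2}]_\downarrow$ and one lands in the first bound $[0,q-1]_\downarrow$, while for $c$ even it contributes $[\tfrac12,\tfrac{c-1}{2}]_\downarrow{}^-_-[\tfrac12,\tfrac{c-1}{2}]_\downarrow$ and one lands in the second bound $[1,q-2]_\downarrow{}^-_-(\tfrac12,\tfrac12)$. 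Your version never sees a half-integer exponent, so the second alternative in the statement looks superfluous -- that should have been a warning sign. The remedy is to keep your Mui\'c decomposition but replace the erroneous zero-padding by the correct $|\mathbf e(\psi_i)|_\downarrow$ and then run the parity case analysis and induction of the paper.
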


\begin{proof} Observe that both right bounds are $\leq_s [1,q]_\downarrow$ (we shall use this evident fact below). 

We shall prove the proposition by induction with respect to the rank $q$. By \cite{Mu-no-un}, we can write $\pi=\mathfrak z([-\frac{(c-1)}2,\frac{(c-1)}2]^{(\chi)})\times\pi'$, where $\pi'$ is a negative representation of $Sp(2q',F)$ (then $2c+q'=q$), $c$ is a positive integer and $\chi$ is a unitary unramified character. 

Consider first the case of odd $c$. Then
 $$
 ||\pi||=||\textstyle\langle[\nu^{-\frac{(c-1)}2}\chi,\nu^{\frac{(c-1)}2}\chi]\rangle\rtimes \pi'||
 $$
 $$
 \textstyle
= ({[1,\frac{(c-1)}2]_\downarrow}^-_- {[0,\frac{(c-1)}2]_\downarrow}^-_-|| \pi'||)_\downarrow
 $$
Now using the above lemma and the inductive assumption, we get
$$
||\pi|| \leq_s
\textstyle
 ({[1,\frac{(c-1)}2]_\downarrow}^-_- {[0,\frac{(c-1)}2]_\downarrow}^-_-[1,q']_\downarrow)_\downarrow
$$
$$
\textstyle
 ({[1,\frac{(c-1)}2]_\downarrow}^-_- {[1,q']_\downarrow}^-_-      [0,\frac{(c-1)}2]_\downarrow)_\downarrow.
$$
Again using the above lemma  we get
$$
||\pi|| \leq_s [0,q-1]_\downarrow.
$$

Now consider the case of even $c$. Then
 $$
 \textstyle
 ||\pi||=||\langle[\nu^{-\frac{(c-1)}2}\chi,\nu^{\frac{(c-1)}2}\chi]\rangle \rtimes \pi')||
 $$
 $$
 \textstyle
= ({[\frac12,\frac{(c-1)}2]_\downarrow}^-_-{[\frac12,\frac{(c-1)}2]_\downarrow}_-^-||\pi'||)_\downarrow.
 $$
Now using the above lemma we get
$$
||\pi|| \leq_s
\textstyle
 ({[\frac12,\frac{(c-1)}2]_\downarrow}^-_-{[\frac12,\frac{(c-1)}2]_\downarrow}_-^-[1,q']_\downarrow)_\downarrow
 $$
 $$
 \textstyle
 ({[\frac32,\frac{(c-1)}2]_\downarrow}^-_-{[\frac32,\frac{(c-1)}2]_\downarrow}_-^-{[1,q']_\downarrow}^-_-(\frac12,\frac12))_\downarrow.
 $$
Again using the above lemma we get
$$
\textstyle
||\pi|| \leq_s {[1,q-2]_\downarrow}^-_-(\frac12,\frac12).
$$
This completes the proof of the proposition.
\end{proof}

\begin{proposition} Let $q\geq 2$ and $\pi$ be an irreducible  unitarizable unramified  representation of $Sp(2q,F)$ which is not  negative. Then the following  inequality holds
$$
\textstyle
||\pi||\leq_s {[1,q-1]_\downarrow}^-_-(\frac12).
$$
Further, if we write $||\pi||=(x_1,  \dots,x_q)$, then
$$
x_1+\dots+x_n\leq {q(q-1)}/2.
$$
\end{proposition}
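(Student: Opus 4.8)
The plan is to use the classification of the unramified unitary dual (Theorem~\ref{theorem-class}). Since $\pi$ is unitarizable, unramified and \emph{not} negative, Theorem~\ref{theorem-class}(ii) gives $\pi\cong\varphi_1\times\dots\times\varphi_m\rtimes\sigma_{neg}$ with $m\geq1$ (otherwise $\pi=\sigma_{neg}$ would be negative), where each $\varphi_i$ is an unramified character of some $GL(n_i,F)$ with $e(\varphi_i)>0$, $\sigma_{neg}$ is a negative unramified representation of $Sp(2q',F)$ with $\sum_in_i+q'=q$, and conditions (1)--(3) there hold. Recalling $||\pi||=(|\mathbf e(\varphi_1)|{}^-_-\cdots{}^-_-|\mathbf e(\varphi_m)|{}^-_-||\sigma_{neg}||)_\downarrow$, Lemma~\ref{lemma-ineq}(ii) reduces everything to bounding, in $\leq_s$, each $|\mathbf e(\varphi_i)|_\downarrow$ and $||\sigma_{neg}||$, and then reassembling while tracking ranks.

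For $\sigma_{neg}$ I would invoke the two preceding propositions: $||\sigma_{neg}||\leq_s[1,q']_\downarrow$ always, and in fact $||\sigma_{neg}||\leq_s[0,q'-1]_\downarrow$ or $||\sigma_{neg}||\leq_s{[1,q'-2]_\downarrow}{}^-_-(\tfrac12,\tfrac12)$ unless $\sigma_{neg}=\mathbf 1_{Sp(2q',F)}$, in which case $||\sigma_{neg}||=[1,q']_\downarrow$. For the $\varphi_i$'s I would group them by their unitary parts $\varphi_i^u$, condition (1) pairing the non-self-dual groups with their contragredients. If the common unitary part $\chi\circ\det$ of a group (a character of $GL(n_0,F)$) is not self-dual, or is self-dual with $n_0$ even, condition (2) gives $e(\varphi_i)<\tfrac12$, and a direct computation of the cuspidal support exponents of $\varphi_i=\langle[\nu^{e(\varphi_i)-(n_0-1)/2}\chi,\nu^{e(\varphi_i)+(n_0-1)/2}\chi]\rangle$ yields $|\mathbf e(\varphi_i)|_\downarrow\leq_s(\tfrac{n_0}2,\tfrac{n_0-1}2,\dots,1,\tfrac12)$. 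If $\chi\circ\det$ is self-dual with $n_0$ odd, condition (3) only gives $e(\varphi_i)<1$; writing the multiset of centers as $\{\alpha_1,\dots,\alpha_k,\beta_1,\dots,\beta_l\}$ with $\alpha_1\leq\dots\leq\alpha_k\leq\tfrac12<\beta_1<\dots<\beta_l<1$, conditions (3)(a)--(c) force $k\geq l-1$ with the $\alpha$'s interleaving the reflected points $1-\beta_j$, and combining the segment bounds $|\mathbf e|_\downarrow\leq_s(\tfrac{n_0}2,\dots,\tfrac12)$ (center $\alpha_i$) and $|\mathbf e|_\downarrow\leq_s(\tfrac{n_0+1}2,\tfrac{n_0-1}2,\tfrac{n_0-1}2,\dots,2,2,1,\tfrac12)$ (center $\beta_j$) gives a bound for the whole group.

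Feeding all of these into Lemma~\ref{lemma-ineq}(ii) and bookkeeping $\sum_in_i+q'=q$ bounds the top $q-1$ coordinates of $||\pi||$ by $q-1,q-2,\dots,1$; for the last coordinate one must exhibit a cuspidal support exponent of absolute value $\leq\tfrac12$, which is automatic unless $\sigma_{neg}=\mathbf 1_{Sp(2q',F)}$ and every self-dual odd-length group is a single $GL(1)$-factor $\nu^\beta\chi$ with $\beta>\tfrac12$ and no $\alpha$'s --- but then condition (3)(d) together with Proposition~\ref{prop-red}(2) (which controls exactly when $\varphi\rtimes\sigma_{neg}$ reduces) forces $q'=0$, hence $q=1$, contrary to hypothesis. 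This yields $||\pi||\leq_s{[1,q-1]_\downarrow}{}^-_-(\tfrac12)$. For the second inequality I would bound the total mass $\sum_ix_i=\sum_iS(\varphi_i)+\mathrm{Sum}(||\sigma_{neg}||)$ with $S(\varphi_i)=\sum_{|j|\leq(n_i-1)/2}|e(\varphi_i)+j|$; since $e(\varphi_i)<1$ this equals $e(\varphi_i)+\tfrac{n_i^2-1}4$ for $n_i$ odd and $\tfrac{n_i^2}4$ for $n_i$ even, and together with $\mathrm{Sum}(||\sigma_{neg}||)\leq\tfrac{q'(q'+1)}2$ (equality only for $\sigma_{neg}$ trivial) and the same exclusion of borderline configurations one checks $\sum_ix_i\leq\tfrac{q(q-1)}2$ for $q\geq2$.

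I expect the self-dual odd-length block estimate to be the main obstacle: there individual exponents can reach $1$, so the bound genuinely relies on the interleaving forced by conditions (3)(a)--(c), and one has to be careful that these conditions constrain the \emph{centers} of the segments $\varphi_i$ while the quantities bounded live on their full, spread-out cuspidal supports. A secondary difficulty is the rank bookkeeping in the reassembly via Lemma~\ref{lemma-ineq}(ii), in particular showing that the positive part must shrink when $\sigma_{neg}$ is near-trivial.
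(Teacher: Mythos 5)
Your proposal is essentially the same strategy as the paper's: write $\pi\cong\varphi_1\times\dots\times\varphi_m\rtimes\sigma_{neg}$ via Theorem~\ref{theorem-class}(ii), bound the cuspidal-support exponents of each $\varphi_i$ and of $\sigma_{neg}$, reassemble with Lemma~\ref{lemma-ineq}(ii), and invoke conditions (3)(c),(d) together with Proposition~\ref{prop-red}(2) to rule out the borderline configurations where a coordinate would exceed its target. You correctly identify the genuine obstacle (self-dual odd-length blocks with exponents up to $1$, controlled only through the interleaving in (3)(a)--(c)). The one real organizational difference is that the paper runs an induction on the rank $q$: it peels off a single group $\nu^{x_1}\varphi\times\dots\times\nu^{x_k}\varphi$, invokes the inductive hypothesis (together with the two preceding propositions on negative representations) to bound the remainder $\pi'$ of rank $q'$ by $[1,q']_\downarrow$, and combines; this keeps the reassembly bookkeeping to a single concatenation in every case. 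You instead bound every group and $\sigma_{neg}$ simultaneously and leave the reassembly as ``one checks,'' which is correct but heavier. One small imprecision worth flagging: in your boundary case you conclude ``$q'=0$, hence $q=1$,'' but a priori there could be two such $GL(1)$-groups (one for $\mathbf 1_{F^\times}$, one for $\mathbf{sgn}_{F^\times}$), giving $q=2$; that configuration is also excluded because with $q'=0$ one has $J=\{(\mathbf 1_{F^\times},1)\}$, so $\mathbf{sgn}_{F^\times}\rtimes\mathbf 1_{Sp(0,F)}$ reduces and condition (3)(d) fails for the $\mathbf{sgn}$-group — but this needs to be said explicitly, and the paper's induction sidesteps it by never having to look at more than one group at a time.
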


\begin{proof} 
 Let $c$ be a positive integer and $\chi$ a unitary character. Below we shall use  the following inequalities which follow from Lemma \ref{lemma-ineq}:
$$
\textstyle
0<\alpha\leq \frac12
\implies
|\mathbf e( \nu^\alpha \langle[\nu^{-\frac{(c-1)}2}\chi,\nu^{\frac{(c-1)}2}\chi]\rangle)|_\downarrow\leq_s
\begin{cases}
\big({[\frac12,\frac c2]_\downarrow}^-_-{[1,\frac{c-1}2]}_\downarrow\big)_\downarrow, & c \text{ is odd};
\\
\big({[1,\frac c2]_\downarrow}^-_-{[\frac12,\frac{c-1}2]}_\downarrow\big)_\downarrow, & c \text{ is even};
\end{cases}
$$
$$
\textstyle
\frac12<\alpha<1 
\implies
|\mathbf e( \nu^\alpha \langle[\nu^{-\frac{(c-1)}2}\chi,\nu^{\frac{(c-1)}2}\chi]\rangle)|_\downarrow
\leq_s
\begin{cases}
\big({[1,\frac {c+1}2]_\downarrow}^-_-{[\frac12,\frac{c}2-1]}_\downarrow\big)_\downarrow, & c \text{ is odd};
\\
\big({[\frac12,\frac {c+1}2]_\downarrow}^-_-{[1,\frac{c}2-1]}_\downarrow\big)_\downarrow, & c \text{ is even}.
\end{cases}
$$
In the case $c=1$ we take $[1,0]_\downarrow=[\frac12,-\frac{1}2]_\downarrow$ to be the empty set.

Observe that the above estimates imply that for odd $c\geq 3$ and $0<\alpha<1$ holds
\begin{equation}
\label{odd}
\textstyle
|\mathbf e( \nu^\alpha \langle[\nu^{-\frac{(c-1)}2}\chi,\nu^{\frac{(c-1)}2}\chi]\rangle)|_\downarrow\leq_s
[\frac12,\frac{2c-1}2]_\downarrow.
\end{equation}

We shall prove the proposition by induction with respect to the rank $q$. 
Since $\pi$ is not negative,
 Theorem \ref{theorem-class} implies that $\pi$ is a member of a complementary series. We start with the complementary series of the form
 $$
 \textstyle
 \pi= 
 \nu^\alpha \langle[\nu^{-\frac{(c-1)}2}\chi,\nu^{\frac{(c-1)}2}\chi]\rangle
 \times 
 \nu^\alpha \langle[\nu^{-\frac{(c-1)}2}\bar\chi,\nu^{\frac{(c-1)}2}\bar\chi]\rangle
  \rtimes\pi',
 $$ 
 where $\pi'$ is a unitarizable representation of $Sp(2q',F)$ (then $2c+q'=q$), $c$ is a positive integer, $\chi$ is a unitary unramified character and $0<\a<1/2$. 
 We shall break analysis of this complementary series  into two possibilities. The first case is when $c$ is odd. Then
 $$
 \textstyle
 ||\pi||
  \textstyle
 \leq_s ({[\frac12,\frac{c}2]_\downarrow}^-_-{[\frac12,\frac{c}2]_\downarrow}^-_-
 {[1,\frac{(c-1)}2]_\downarrow}^-_-{[1,\frac{(c-1)}2]_\downarrow}^-_-[1,q']_\downarrow)_\downarrow
 $$
 The above inequality implies 
 $$
  \textstyle
 ||\pi ||\leq_s  {[1,q-2]_\downarrow}^-_-(\frac12,\frac12)
 $$
 since obviously
 $$
  \textstyle
  ({[\frac32,\frac{c}2]_\downarrow}^-_-{[\frac32,\frac{c}2]_\downarrow}^-_-
 {[1,\frac{(c-1)}2]_\downarrow}^-_-{[1,\frac{(c-1)}2]_\downarrow})_\downarrow
 \leq_s
 [q'+1,q-2]_\downarrow.
 $$
 The  inequality $||\pi ||\leq_s  {[1,q-2]_\downarrow}^-_-(\frac12,\frac12)$ also implies that the second inequality in the proposition holds.

 Consider now the case of even $c$. Then analogously we have
 $$
  \textstyle
 ||\pi||
 \leq_s
  ({[1,\frac{c}2]_\downarrow}^-_-{[1,\frac{c}2]_\downarrow}^-_-
 {[\frac12,\frac{(c-1)}2]_\downarrow}^-_-{[\frac12,\frac{(c-1)}2]_\downarrow}^-_-[1,q']_\downarrow)_\downarrow
$$
 Now from the above inequality we get again
 $$
  \textstyle
 ||\pi||\leq_s
 {[1,q-2]_\downarrow}^-_-(\frac12,\frac12).
 $$
 The above inequality also implies that the second inequality in the proposition holds.

The second possibility for the complementary series is that 
$$
 \textstyle
 \pi= \nu^\alpha \langle[\nu^{-\frac{(c-1)}2}\chi,\nu^{\frac{(c-1)}2}\chi]\rangle \rtimes\pi',
 $$ 
 where $\pi'$ is an irreducible unitarizable unramified representation of $Sp(2q',F)$ (then $c+q'=q$), $c$ is a positive integer, $\chi$ is a self dual character of $F^\times$, $0<\a<1/2$ and $\nu^{\frac12} \langle[\nu^{-\frac{(c-1)}2}\chi,\nu^{\frac{(c-1)}2}\chi]\rangle \rtimes \mathbf 1_{Sp(0,F)}$ reduces. The last reducibility condition and Proposition \ref{prop-red} imply that $c$ is an even number. 
  Now the inductive assumption implies 
 $||\pi||\leq _s
 \big({[1,\frac c2]_\downarrow}^-_-{[\frac12,\frac {c-1}2]_\downarrow}^-_-[1,q']_\downarrow)\big)_\downarrow.
 $ This further implies that $||\pi||\leq _s
 {[1,q-2]_\downarrow}^-_-(1,\frac12)$, which implies the first inequality in the  proposition. Further, 
  $||\pi||\leq_s
 \big({[1,\frac c2]_\downarrow}^-_-{[\frac12,\frac {c-1}2]_\downarrow}^-_-[1,q']_\downarrow)\big)_\downarrow.
 $ implies the second inequality if $c>2$ or if $c=2$ and $q>2$.  For $c=2$ and $q=2$ one directly computes that   
 $||\nu^\alpha \langle[\nu^{-\frac{1}2}\chi,\nu^{\frac{1}2}\chi]\rangle \rtimes \mathbf1_{Sp(0,F)}||=1$. Therefore, the second equity holds also in this case.

 If the complementary series cannot be written in any of the above two forms, then Theorem \ref{theorem-class} implies that it must be a
 complementary series of the form
 $$
\pi= \nu^{x_1}\varphi\times\dots\nu^{x_k}\varphi\rtimes\pi',
 $$
 where $\pi'$ is an irreducible unitarizable unramified representation of $Sp(2q',F)$, $\varphi$ is a self dual character of some $GL(c,F)$ such that $\nu^{1/2} \varphi\rtimes\mathbf1_{Sp(0,F)}$ is irreducible, and $0<x_1,\dots,x_k<1$ satisfy conditions of (3) from Theorem \ref{theorem-class} (we shall use these conditions later).   Observe that $q'+kc=q$ and that $c$ must be odd by Proposition \ref{prop-red} since $\nu^{1/2} \varphi\rtimes\mathbf1_{Sp(0,F)}$ is irreducible.
 We shall consider four  cases.

 First consider the case $c=1$ and $k=1$. 
  Now condition (d) of  Theorem \ref{theorem-class} implies that $\varphi\rtimes\pi_{neg}$ is irreducible. If $\pi'\cong\mathbf 1_{Sp(2(q-1),F)}$, then $\pi'=\pi_{neg}$ and $\chi\t\pi_{neg}$ would be reducible since $q\geq2$ (see Proposition \ref{prop-red}). Thus, $\pi'\not\cong\mathbf 1_{Sp(2(q-1),F)}$
  Suppose $q\geq 3$.
  Now inductive assumption implies that ${[1,q-2]_\downarrow}^-_-(1,\frac12)$ is an upper bound. This implies the first inequality of the proposition. This implies also the second inequality in this case. 
  
  Suppose $q=2$. If $\pi_{neg}$ is a representation of $Sp(2,F)$, then $\pi_{neg}\not\cong\mathbf 1_{Sp(2(q-1),F)}$ as we have observed above. This
 implies that $\pi_{neg}$ is a unitary principal series representation, and then obviously both inequalities of the proposition hold. Suppose now that $\pi_{neg}$ is a representation of $Sp(0,F)$. Then $\pi'$ is a complementary series of $Sp(2,F)$, i.e. $\pi'\cong \nu^\a\mathbf1_{F^\t}\r\mathbf1_{Sp(0,F)}$ with $0<\a<1$. This implies $\varphi={sgn}_{F^\t}$ since $k=1$. But then $\chi\r\pi_{neg}$ is reducible, which is not the case. Thus, this case cannot happen. This completes the proof of the case $c=1$ and $k=1$.

 Consider now the case $c=1$ and $k\geq 2$. Observe that by (c) of the classification Theorem \ref{theorem-class}, at least one $x_i$ is $\leq \frac12$. Thus, ${[1,q']_\downarrow}^-_-(1,\dots,1,\frac12)$ is an upper bound, which obviously implies the first inequality of the proposition. For the second one, observe that the above inequality implies that an upper bound for the left hand side of the second equality is $(q-1)(q-2)/2+3/2$. If $q>2$, then this is obviously $\leq q(q-1)/2$, and therefore the second equity also holds. Suppose that $q=2$. Then (b) of   Theorem \ref{theorem-class} implies that $x_1+x_2\leq 1$. This implies the second inequality in this case.
 
 It remains to prove the proposition  when  $c\geq 3$ (recall, $c$ must be odd for the complementary series that we consider).
 Consider first the case $k=1$. Then we have a following upper bound
 $$
 \textstyle
\big({[1,q']_\downarrow}_-^-
[\frac12,\frac{2c-1}2]_\downarrow
\big)_\downarrow.
 $$
 Recall $q'+c=q$. This obviously implies the first inequality. This inequality implies the second inequality if $q'\geq1$ (since $c\geq2$). It remains to consider the case
 $q'=0$. But then $\varphi\r\mathbf1_{Sp(0,F)}$ is reducible since $c$ is odd (see Proposition \ref{prop-red}), and we cannot have complementary series (by (d) of Theorem \ref{theorem-class}). This completes the proof for this case.

 We are left with the last case $c\geq2$ and $k\geq 2$. Then we must  have at least one exponents between 0 and $\frac12$ by  (c) of Theorem \ref{theorem-class}. Now we have an upper bound as above, except that we have $k$ times $[\frac12,\frac{2c-1}2]_\downarrow$. This obviously implies that the first inequality of the proposition holds. One gets the second inequality from this upper bound since $q'(q'+1)/2+kc^2/2\leq q(q-1)/2$ (one directly gets this using  that $q'+kc=q$ and $k\geq2)$.
\end{proof}

\end{document}